\let\csname ver@amsthm.sty\endcsname\relax
\let\theoremstyle\relax
\let\qedhere\relax
\numberwithin{equation}{subsection}
\DeclareMathOperator{\coker}{coker}
\DeclareMathOperator{\ev}{ev}
\DeclareMathOperator{\Ext}{Ext}
\DeclareMathOperator{\gr}{gr}
\DeclareMathOperator{\opH}{H}
\DeclareMathOperator{\h}{H}
\DeclareMathOperator{\HC}{HC}
\DeclareMathOperator{\Hom}{Hom}
\DeclareMathOperator{\id}{id}
\DeclareMathOperator{\im}{im}
\DeclareMathOperator{\maxspec}{MaxSpec}
\newcommand{\C}{\mathbb{C}}
\newcommand{\N}{\mathbb{N}}
\newcommand{\Z}{\mathbb{Z}}
\newcommand{\cp}{\mathcal{P}}
\newcommand{\cv}{\mathcal{V}}
\newcommand{\fa}{\mathfrak{a}}
\newcommand\g{\mathfrak{g}}
\newcommand{\fh}{\mathfrak{h}}
\newcommand{\fri}{\mathfrak{i}}
\newcommand{\fm}{\mathfrak{m}}
\newcommand{\fr}{\mathfrak{r}}
\newcommand{\fs}{\mathfrak{s}}
\newcommand{\fsl}{\mathfrak{sl}}
\newcommand{\Cfp}{\C[f]^+}
\newcommand{\evm}{\ev_{\fm}}
\newcommand{\fre}{\fr^e}
\newcommand{\gfp}{\g[f]^+}
\newcommand{\ggA}{\g[A]}
\newcommand{\ggAp}{\g[A_+]}
\newcommand{\ghat}{\wh{\g}}
\newcommand{\gI}{\g \otimes I}
\newcommand{\gIs}{\gI/I^s}
\newcommand{\gk}{\g^{\oplus k}}
\newcommand{\gt}{\g[t]}
\newcommand{\gtp}{\gt^+}
\newcommand{\gtps}{\gtp_s}
\newcommand{\Hbul}{\opH^\bullet}
\newcommand{\hhat}{\wh{\fh}}
\newcommand{\Lg}{L(\g)}
\newcommand{\set}[1]{\left\{ #1 \right\}}
\newcommand{\subgrp}[1]{\langle #1 \rangle}
\newcommand{\wh}[1]{\widehat{ #1 }}
\newtheorem{theorem}{Theorem}[subsection]
\newtheorem{lemma}[theorem]{Lemma}
\newtheorem{proposition}[theorem]{Proposition}
\newtheorem{corollary}[theorem]{Corollary}
\crefname{theorem}{Theorem}{Theorems}
\crefname{lemma}{Lemma}{Lemmas}
\crefname{remark}{Remark}{Remarks}
\crefname{example}{Example}{Examples}
\crefname{proposition}{Proposition}{Propositions}
\crefname{corollary}{Corollary}{Corollaries}
\theoremstyle{definition}
\newtheorem{example}[theorem]{Example}
\newtheorem{remark}[theorem]{Remark}
\title{Extensions for Generalized Current Algebras}
\author{Brian D. Boe}
\address{Department of Mathematics \\
         University of Georgia \\
         Athens, GA 30602}
\email{brian@math.uga.edu}
\author{Christopher M. Drupieski}
\address{Department of Mathematical Sciences \\
         DePaul University  \\
         Chicago, IL 60614}
\email{cdrupies@depaul.edu}
\author{Tiago R. Macedo} 
\address{Department of Science and Technology\\
         Federal University of S\~ao Paulo\\
         S\~ao Jos\'e dos Campos, S\~ao Paulo, Brazil, 12.247-014\\
         \  and \ 
         Department of Mathematics and Statistics\\
         University of Ottawa\\
         Ottawa, ON K1N 6N5}
\thanks{Research of the third author was supported by FAPESP grant 2009/05887-8 and CNPq grant 232462/2014-3}
\email{tmacedo@unifesp.br}
\author{Daniel K. Nakano}
\address{Department of Mathematics \\
         University of Georgia \\
         Athens, GA 30602}
\thanks{Research of the fourth author was supported by NSF
grant DMS-1402271}
\email{nakano@math.uga.edu}
\date{\today}
\subjclass[2010]{Primary 17B55, 17B65}
\begin{document}

\begin{abstract}
Given a complex semisimple Lie algebra $\g$ and a commutative $\C$-algebra $A$, let $\ggA = \g \otimes A$ be the corresponding generalized current algebra. In this paper we explore questions involving the computation and finite-dimensionality of extension groups for finite-dimensional $\ggA$-modules. Formulas for computing $\Ext^{1}$ and $\Ext^{2}$ between simple $\ggA$-modules are presented. As an application of these methods and of the use of the first cyclic homology, we completely describe $\Ext^{2}_{\gt}(L_{1},L_{2})$ for $\g=\fsl_{2}$ when $L_{1}$ and $L_{2}$ are simple $\gt$-modules that are each given by the tensor product of two evaluation modules.
\end{abstract} 

\maketitle

\section{Introduction} 

\subsection{}

Given a complex Lie algebra $\fa$ and a commutative $\C$-algebra $A$, the Lie algebra $\fa[A] := \fa \otimes_{\C} A$ is known as a generalized current algebra. When $\g$ is a finite-dimensional complex simple Lie algebra and $A$ is either $\C[t]$ or $\C[t,t^{-1}]$, the Lie algebra $\ggA$ is well-studied for its deep connections to infinite-dimensional Lie theory. In particular, the current algebra $\gt := \g \otimes \C[t]$ is a parabolic subalgebra of the affine Kac-Moody Lie algebra associated to $\g$ (cf.\ \cite[Section 13.1]{Kumar:2002a}) and the loop algebra $\Lg := \g \otimes \C[t,t^{-1}]$ is its centerless derived subalgebra.

The Lie algebra cohomology of $\ggA$ is not well-understood. One of the main goals of this paper is to provide a better understanding of it and in particular a better understanding in the case $A=\C[t]$. A fundamental open question is the following:
\begin{equation} \label{q1}
\text{Given a finite-dimensional $\ggA$-module $M$, is $\opH^{n}(\ggA,M)$ finite-dimensional?}
\end{equation}
For $\g$ itself, that is, the case $A = \C$, it follows from results of Chevalley and Eilenberg \cite[\S24]{CE48} that $\h^n (\g, M) \cong \h^n (\g,\C) \otimes M^\g$. Also, $\h^\bullet (\g,\C)$ is isomorphic to an exterior algebra with finitely many generators \cite[Th\'eor\`eme 10.2]{koszul}. Thus $\h^n (\g, M)$ is finite-dimensional. In the case of truncated polynomial algebras, that is, when $A=\C[t]/\subgrp{t^s}$ for some  $s \geq 1$, Hanlon conjectured that the cohomology with trivial coefficients, $\h^\bullet (\ggA,\C)$, should be isomorphic to $\h^\bullet (\g,\C)^{\otimes s}$.  This is one of the two strong Macdonald conjectures, which were verified by Fishel, Grojnowski, and Teleman \cite{FGT08}.

For $A = \C[t]$, Feigin conjectured that $\h^\bullet(\gt,\C) \cong \h^\bullet (\g,\C)$; we verify this in \cref{theorem:gtpginvariants}. Another fundamental result along these lines was proved by Garland and Lepowsky. They described $\h^n (\gtp,\C)$, where $\gtp = \g \otimes t \C[t]$, as a finite direct sum of finite-dimensional irreducible $\g$-modules \cite[Theorem 8.6]{Garland:1976}. For a general commutative unital algebra $A$, Zusmanovich provided a formula for the second homology of $\g [A]$ with trivial coefficients \cite[Theorem 0.1]{Z94}. As a consequence of his result, if the first cyclic homology (see \cref{SS:cyclic}) of $A$ is finite-dimensional, then $\h^2(\ggA,\C)$ is also finite-dimensional.

In this paper we show that the finite-dimensionality of $\h^n(\ggA,M)$ depends only on the finite-dimensionality of $\h^i(\ggAp,M)^\g$ for $i < n$, where $A_+$ is the augmentation ideal of $A$ (see \cref{thm:gtcohomologyM}). We also prove the finite-dimensionality of $\h^2(\gI,\C)$ if $I = \subgrp{(t-a)(t-b)} \unlhd \C[t]$ where $a \neq b$ (see \cref{thm:H^2(goI).fd}). We suspect that $\h^n (\ggA, M)$ is finite-dimensional in general for $A=\C[t]$. The assumption that $\g$ is a finite-dimensional simple Lie algebra is essential. For example, if $\fh$ is a commutative Lie algebra, then $\fh[A]$ is also commutative and $\opH^{n}(\fh[A],\C) \cong \Hom_{\C}(\Lambda^{n}(\fh[A]),\C)$ is infinite-dimensional for any infinite-dimensional algebra $A$ and any $n>0$.

Finite-dimensional simple $\ggA$-modules have been determined and described as tensor products of evaluation modules by Chari, Fourier, and Khandai \cite[Proposition 10]{CFK}. Another open question which is investigated in the current work is the following:
\begin{equation} \label{q2}
\text{Given two finite-dimensional simple $\ggA$-modules $L_{1}$ and $L_{2}$, describe $\Ext^{n}_{\ggA}(L_{1},L_{2})$.}
\end{equation}
General results for $n=1$ have been proved by Kodera \cite{Kodera:2010} and by Neher and Savage \cite{NS11}. Some partial results for $n\geq 1$ were given by Fialowski and Malikov \cite{Fialowski:1994a}. Using results of Garland and Lepowsky, we compute $\Ext^{n}_{\gt}(L_{1},L_{2})$ for $n=1$ (recovering Kodera's and Neher and Savage's results), and for $n=2$ in terms of $\h^2(\gI,\C)$ where $I$ is an ideal of $\C[t]$ (see \cref{thm:Ext^1.goA,thm:ext2.gt,thm:self.ext2.ga}). We also explicitly describe $\h^2(\gI,\C)$ for $\g = \fsl_2$ when $I = \subgrp{(t-a)(t-b)} \unlhd \C[t]$ and $a \neq b$ (see \cref{thm:H^2.sl2oI}).

\subsection{Organization of the paper}

In Section~\ref{section:prelim} we introduce the notation and conventions that will be used throughout the paper. General results about the structure of the cohomology of $\ggA$, and in particular of $\gt$, are presented in Section~\ref{ss:hebmeaz}. These results provide the foundation for the results to follow. In Section~\ref{section:Ext1-Ext2}, general formulas for $\Ext^{1}$ and $\Ext^{2}$ between two finite-dimensional simple $\ggA$-modules are proved (see \cref{thm:Ext^1.goA,thm:ext2.gt,thm:self.ext2.ga}). In the case $n=1$, this provides an affirmative answer to \eqref{q1}, and a complete answer to \eqref{q2} by reducing the computation of $\Ext^{1}$ to the problem of decomposing tensor products of simple finite-dimensional $\g$-modules. In the case $n=2$, \cref{thm:ext2.gt,thm:self.ext2.ga} yield reductions of \eqref{q1} and \eqref{q2} to understanding the structure of the second cohomology of ideals $\gI$ with trivial coefficients.

In Sections~\ref{section:secondfd}--\ref{ss:h2.sl2} we study the second cohomology of $\gt$, $\gI$, and of their truncated versions. In Section~\ref{section:secondfd}, through some intricate calculations, we prove that the second cohomology for the ideal $\gI$ is finite-dimensional if $I = \subgrp{(t-a)(t-b)} \unlhd \C[t]$ and $a \neq b$. An interesting facet of this computation involves using work of Zusmanovich \cite{Z94} that led us to perform a detailed computation of the first cyclic homology of an augmented subalgebra of $\C[t]$ associated to the ideal $I$ (see \cref{prop:HC1}). As a consequence, $\Ext^{2}_{\gt}(L_{1},L_{2})$ is proved to be finite-dimensional when $L_{1}$ and $L_{2}$ are tensor products of at most two evaluation modules (see \cref{cor:Ext2simplesfd}). In Section~\ref{section:secondcohoprops}, the second cohomologies of $\gtp$, $\gI$, and their truncated and associated graded versions are compared using spectral sequence methods. As an application, key information about the $\g\times \g$-composition factors of $\opH^{2}(\gI,\C)$ are obtained. The spectral sequence methods used in Section~\ref{section:secondcohoprops} and their justification are presented in Appendix \ref{appendix}. 

The paper culminates in Section~\ref{ss:h2.sl2} with the computation of the $\g\times \g$-composition factors of $\opH^{2}(\gI,\C)$ and their respective multiplicities when $\g = \fsl_2$, $I=\subgrp{(t-a)(t-b)}$, and $a\neq b$ (cf.\ \cref{thm:H^2.sl2oI}). From our results one can completely describe $\Ext^{2}_{\fsl_{2}[t]}(L_{1},L_{2})$ when $L_{1}$ and $L_{2}$ are tensor products of at most two evaluation modules. 

\subsection{Acknowledgments}

The third author completed a portion of the present work as a Ph.D.\ student under the supervision of Adriano Moura and the co-supervision of Daniel Nakano, and is grateful for their support and guidance. He would also like to thank FAPESP for its financial support, which enabled him to pay for several visits to the University of Georgia (UGA), and thank as well the Department of Mathematics at UGA for their hospitality during all of his visits. Another part of the present work was completed while the third author was a postdoctoral fellow at the University of Ottawa. He would like to thank CNPq for its financial support and the Department of Mathematics and Statistics of the University of Ottawa for their hospitality.


\section{Preliminaries} \label{section:prelim}

\subsection{Notation} \label{subsection:notation}

Let $\g$ be a finite-dimensional complex simple Lie algebra. Let $\fh \subset \g$ be a Cartan subalgebra, let $\Phi$ be the root system of $\fh$ in $\g$, and let $\Delta$ be a choice of a simple system in $\Phi$. Let $\Phi^+$ be the positive system associated to $\Delta$ and let $\theta$ be the highest positive root. Let $Q$ be the root lattice of $\fh$ and let $Q^+$ be the positive monoid generated by $\Delta$ in $Q$. Let $P$ be the weight lattice associated to $Q$ and let $P^+ \subset P$ be the subset of dominant weights. Given $\lambda \in P^+$, let $V(\lambda)$ be the finite-dimensional irreducible $\g$-module of highest weight $\lambda$. Let $\geq$ be the partial order on $P$ defined by $\lambda \geq \mu$ if and only if $\lambda-\mu \in Q^+$. Let $W$ be the Weyl group associated to $\Phi$.

Let $\gt = \g \otimes \C[t]$ denote the current algebra and let $\gtp$ denote the ideal $\g \otimes t\C[t]$ in $\gt$. Here and in the sequel, $\otimes$ means $\otimes_{\C}$. More generally, let $A$ be a finitely generated commutative $\C$-algebra and let $\ggA$ be the Lie algebra with underlying vector space $\g \otimes A$ and with Lie bracket defined by $[x \otimes a,y \otimes b] = [x,y] \otimes ab$. Let $\maxspec(A)$ be the set of maximal ideals in $A$. Given $\fm \in \maxspec(A)$, let $\evm: \ggA \rightarrow \g$ be the Lie algebra homomorphism induced by the quotient map $A \rightarrow A/\fm \cong \C$. Given a $\g$-module $V$, let $\evm^* V$ be the $\ggA$-module obtained by pulling back the $\g$-module structure map for $V$ along $\evm$. For all $\fm,\fm' \in \maxspec(A)$, one has $\evm^* V(0) \cong \ev_{\fm'}^* V(0) \cong \C$ as $\ggA$-modules.

\subsection{Irreducible modules}

Let $\cp$ be the set of finitely-supported functions from $\maxspec(A)$ to $P^+$, that is, the set of functions $\pi: \maxspec(A) \rightarrow P^+$ such that $\pi(\fm) = 0$ for all but finitely many $\fm \in \maxspec(A)$. Then there exists a bijection between $\cp$ and the set of isomorphism classes of finite-dimensional irreducible $\ggA$-modules, which associates to $\pi \in \cp$ the isomorphism class of the $\ggA$-module $\cv(\pi):=\bigotimes_{\fm \in \maxspec(A)} \evm^* V(\pi(\fm))$. Note that there are only finitely many nontrivial factors in the tensor product because $\pi(\fm) =0$ for all but finitely many $\fm \in \maxspec(A)$, and that different orderings of the factors in the tensor product yield isomorphic modules.

Recall the involution $\lambda \mapsto \lambda^*$ on $P^+$ defined by $\lambda^* = -w_0 \lambda$, where $w_0$ is the longest element in the Weyl group $W$. Given $\pi \in \cp$, define $\pi^* \in \cp$ by $\pi^*(\fm) = \pi(\fm)^*$. Then the dual module $\cv(\pi)^* = \Hom_\C(\cv(\pi),\C)$ is isomorphic as a $\ggA$-module to $\cv(\pi^*)$.


\section{Cohomology for finite-dimensional modules} \label{ss:hebmeaz}

\subsection{}

If $A$ is a commutative $\C$-algebra then $\g \cong \g \otimes \C.1$ is a subalgebra of $\ggA$. For an augmented algebra $A$, let $A_{+}$ denote its augmentation ideal and let $\ggAp= \g \otimes A_{+}$.

\begin{theorem} \label{thm:gtcohomologyM}
Let $A$ be a commutative augmented $\C$-algebra and let $M$ be a $\ggA$-module that is finitely semisimple for $\g$. Then
\[
\h^n(\ggA,M) \cong \bigoplus_{i+j=n} \h^i(\ggA,\g;M) \otimes \h^j(\g,\C) \cong \bigoplus_{i+j=n} \h^i(\ggAp,M)^\g \otimes \h^j(\g,\C).
\]
\end{theorem}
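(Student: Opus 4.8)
The plan is to deduce both isomorphisms from the behaviour of $\g$ as a reductive subalgebra of $\ggA$. Since $A$ is augmented, the unit and the augmentation split $A = \C\cdot 1 \oplus A_+$ as $\C$-modules, and correspondingly $\ggA = \g \oplus \ggAp$, where $\g = \g\otimes\C\cdot 1$ is a subalgebra and $\ggAp$ is an ideal; thus $\ggA = \g \ltimes \ggAp$ is a semidirect product. Under the adjoint action of $\g$ on $\ggA \cong \g\otimes A$, the algebra $\ggA$ is a (possibly infinite) direct sum of copies of the adjoint module, which is simple because $\g$ is simple; hence $\g$ acts semisimply, i.e.\ $\g$ is reductive in $\ggA$. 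By hypothesis $M$ is a semisimple $\g$-module.

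First I would prove the first isomorphism by invoking the factorization of Lie algebra cohomology relative to a reductive subalgebra, due to Hochschild and Serre: for a reductive subalgebra $\g\subseteq\ggA$ and a semisimple $\g$-module $M$, the Chevalley--Eilenberg complex factors, up to quasi-isomorphism, as the tensor product of the relative complex with the invariant forms on $\g$, yielding $\h^n(\ggA,M)\cong\bigoplus_{i+j=n}\h^i(\ggA,\g;M)\otimes\h^j(\g,\C)$. The engine behind this factorization is Koszul's theorem \cite{koszul} that the $\g$-invariant forms $(\Lambda^\bullet\g^*)^\g$ are closed and already compute $\h^\bullet(\g,\C)$; together with the Chevalley--Eilenberg computation of $\h^\bullet(\g,-)$ \cite{CE48}, this forces the Hochschild--Serre spectral sequence $E_2^{p,q}=\h^p(\g,\h^q(\ggAp,M))$ of the ideal $\ggAp\trianglelefteq\ggA$ to collapse at the $E_2$-page. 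Here I must take care that, although the ambient $\ggA$ may be infinite-dimensional when $A=\C[t]$, the subalgebra $\g$ is finite-dimensional and all the relevant invariant data, namely $\h^\bullet(\g,\C)$ and $(\Lambda^\bullet\g^*)^\g$, are finite-dimensional, so the classical argument applies verbatim.

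To pass to the second isomorphism I would identify the relative cohomology with $\g$-invariants. The relative Chevalley--Eilenberg complex is $\Hom_\g(\Lambda^\bullet(\ggA/\g),M)$. Since $\ggA/\g\cong\ggAp$ as $\g$-modules and $\ggAp$ is a subalgebra (indeed an ideal) of $\ggA$, the brackets occurring in the differential all land in $\ggAp$, so this complex coincides with $C^\bullet(\ggAp,M)^\g$ equipped with the intrinsic Chevalley--Eilenberg differential of $\ggAp$. Because $\g$ is semisimple, the functor of $\g$-invariants is exact and hence commutes with taking cohomology, giving $\h^i(\ggA,\g;M)\cong\h^i(\ggAp,M)^\g$. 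Substituting this into the first isomorphism yields the second.

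The main obstacle is the collapse of the spectral sequence, i.e.\ the vanishing of all higher differentials. A bare splitting of $0\to\ggAp\to\ggA\to\g\to0$ only forces the differentials hitting the bottom row $q=0$ to vanish (via injectivity of inflation); the full degeneration genuinely uses the reductivity of $\g$ through Koszul's theorem, and some care is needed to run the argument when $\ggA$ is infinite-dimensional. A secondary point to verify is the identification of differentials in the relative complex, which rests on $\ggAp$ being a subalgebra so that the relative differential agrees with the intrinsic differential of $\ggAp$.
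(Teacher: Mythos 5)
Your overall architecture --- the spectral sequence relative to the subalgebra $\g$ for the first isomorphism, followed by the identification $\h^i(\ggA,\g;M)\cong\h^i(\ggAp,M)^\g$ --- is the same as the paper's, but your justification of the crucial degeneration step is not correct, and you explicitly set aside the ingredient that actually makes it work. The ``factorization of the Chevalley--Eilenberg complex for a reductive subalgebra'' is false at the level of generality you invoke it: take $\fh$ a Cartan subalgebra of $\fsl_2$, which is reductive in $\fsl_2$. The relative complex $\Hom_{\fh}(\Lambda^\bullet(\fsl_2/\fh),\C)$ is one-dimensional in degrees $0$ and $2$ and zero in degree $1$, so $\bigoplus_{i+j=n}\h^i(\fsl_2,\fh;\C)\otimes\h^j(\fh,\C)$ has total dimension $4$, whereas $\h^\bullet(\fsl_2,\C)$ has total dimension $2$; the spectral sequence of \cite[Theorem E.13]{Kumar:2002a} exists but does not collapse. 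Reductivity (via Koszul) buys you the product form of the $E_2$-page, not the vanishing of the higher differentials. The same problem afflicts your alternative phrasing via the Lyndon--Hochschild--Serre spectral sequence of the ideal $\ggAp$: the Whitehead lemmas kill $d_2$ because $\h^1(\g,\C)=\h^2(\g,\C)=0$, but $d_3^{0,q}$ lands in a subquotient of $\h^3(\g,\C)\otimes\h^{q-2}(\ggAp,M)^\g$, which is nonzero in general; indeed \cref{prop3} of the paper exhibits exactly such a nonvanishing $d_3^{0,2}$ for the ideal $\gI \unlhd \gt$. So neither reductivity nor semisimplicity of the quotient can be ``the engine'' of the collapse.

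What actually forces degeneration is the augmentation splitting, used more strongly than you allow: since $\id\otimes\varepsilon:\ggA\to\g$ splits the inclusion $\g\hookrightarrow\ggA$ as a map of Lie algebras, the restriction map $\h^j(\ggA,\C)\to\h^j(\g,\C)$ is a \emph{split surjection}; because this restriction is the edge map onto $E_2^{0,j}$ in \eqref{eq:relativespecseq}, surjectivity forces $E_2^{0,\bullet}=E_\infty^{0,\bullet}$, i.e.\ the whole column $E_2^{0,\bullet}$ consists of permanent cycles --- not merely the statement about differentials into the bottom row that you credit the splitting with. Combining this with $E_2^{i,j}\cong E_2^{i,0}\otimes E_2^{0,j}$, the derivation property of $d_r$, and the vanishing of $d_r^{\bullet,0}$ for degree reasons, every differential vanishes; the module spectral sequence \eqref{eq:relativespecseqM} over it then collapses as well, which handles general $M$. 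Your second step (invariants commute with cohomology because $\g$ is semisimple) reaches the right conclusion but also needs care, since $\Hom_\C(\Lambda^q(\ggAp),M)$ is an infinite product of finite-dimensional $\g$-modules rather than a finitely semisimple one; the paper sidesteps this by comparing the relative Hochschild--Serre spectral sequences for the pairs $(\ggA,\g)$ and $(\ggAp,0)$.
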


\begin{proof}
By \cite[Theorem E.13]{Kumar:2002a}, there exists a spectral sequence
\begin{equation} \label{eq:relativespecseq}
E_2^{i,j} = \opH^i(\ggA,\g; \C) \otimes \opH^j(\g,\C) \Rightarrow \opH^{i+j}(\ggA,\C).
\end{equation}
Here $\Hbul(\ggA,\g;\C)$ denotes the relative Lie algebra cohomology of $\ggA$ relative to the subalgebra $\g$. The edge map $\opH^j(\ggA,\C) \rightarrow E_2^{0,j} = \opH^j(\g,\C)$ of the spectral sequence is the restriction map induced by the inclusion $\g \hookrightarrow \ggA$. Since this inclusion splits via the augmentation map $\varepsilon: \ggA \rightarrow \g$, the restriction map in cohomology $\opH^j(\ggA,\C) \rightarrow \opH^j(\g,\C)$ is a split surjection. Then it follows that the differential $d_r^{0,\bullet} : E_r^{0,\bullet} \rightarrow E_r^{r,\bullet+1-r}$ is trivial for $r \geq 2$ and that the space $E_2^{0,\bullet}$ of \eqref{eq:relativespecseq} consists of permanent cycles, i.e., $E_{2}^{0,\bullet} = E_{\infty}^{0,\bullet}$. Now using the fact that $E_{2}^{i,j} \cong E_{2}^{i,0} \otimes E_{2}^{0,j}$ and $d_2^{i,j} (a \otimes b) = d_2^{i,0}(a)\otimes b + (-1)^i a \otimes d_2^{0,j}(b)$ in \eqref{eq:relativespecseq}, and that $d_2^{\bullet,0}=0$, it follows that the spectral sequence collapses at the $E_2$-page, yielding the isomorphism
\[ 
\opH^n(\ggA,\C) \cong \bigoplus_{i+j = n} \opH^i(\ggA,\g;\C) \otimes \opH^j(\g,\C).
\]

Since $M$ is a $\ggA$-module that is finitely semisimple for $\g$, there exists by \cite[Theorem E.13]{Kumar:2002a} a spectral sequence
\begin{equation} \label{eq:relativespecseqM}
E_2^{i,j} = \opH^i(\ggA,\g;M) \otimes \opH^j(\g,\C) \Rightarrow \opH^{i+j}(\ggA,M).
\end{equation}
Moreover, \eqref{eq:relativespecseqM} is a module over \eqref{eq:relativespecseq}, and $E_2^{i,j} \cong E_2^{i,0} \otimes E_{2,\C}^{0,j}$, where $E_{2,\C}^{0,\bullet}$ denotes the space $E_2^{0,\bullet}$ in \eqref{eq:relativespecseq}, which consists of permanent cycles. Using the derivation property of the differential on \eqref{eq:relativespecseqM}, namely, $d_2^{i,j} (m \otimes r) = d_2^{i,0}(m) \otimes r + (-1)^i m \otimes d_{2,\C}^{0,j}(r)$, it follows that the spectral sequence \eqref{eq:relativespecseqM} also collapses at the $E_2$-page, and hence that
\[ 
\opH^n(\ggA,M) \cong \bigoplus_{i+j=n} \opH^i(\ggA,\g;M) \otimes \opH^j(\g,\C).
\]
Finally, it follows from applying the relative Hochschild-Serre spectral sequence (cf.\ \cite[\S6]{Evens:2013}) to the pairs $(\ggA,\g)$ and $(\ggAp,0)$ that $\opH^i(\ggA,\g;M) \cong \opH^i(\ggAp,M)^\g$.
\end{proof}

Now let $A=\C[t]$ and let $\lambda,\mu \in P^+$. Taking $M = \ev_0^* V(\lambda^{*}) \otimes \ev_0^* V(\mu)$ in \cref{thm:gtcohomologyM}, and using the fact that $\gtp$ acts trivially on $\ev_0^*V(\lambda^{*})$ and $\ev_0^*V(\mu)$, we get
\[ 
\Ext_{\gt}^n(\ev_0^*V(\lambda),\ev_0^*V(\mu)) \cong \bigoplus_{i+j=n} \Hom_\g(V(\lambda), \opH^i(\gtp,\C) \otimes V(\mu)) \otimes \opH^j(\g,\C).
\]
Using the explicit description of $\opH^\bullet(\gtp,\C)$, described below in \cref{theorem:Lepowsky}, this provides an explicit description for $\Ext_{\gt}^n(\ev_0^*V(\lambda),\ev_0^*V(\mu))$. This description was essentially known already to Fialowski and Malikov \cite[Proposition 2]{Fialowski:1994a}.

\subsection{Irreducible summands}  \label{ss:hn.goI}

Now we consider the case when $A=\C[t]$ and $I$ is an ideal of $A$. The adjoint action of $\gt$ on $\gI$ naturally induces $\gt$-module structures on $\h_n (\gI,\C)$ and $\h^n (\gI,\C)$. By restriction, we can consider these spaces as $\g$-modules. Recall that a $\g$-module is said to be finitely semisimple if it decomposes as a (possibly-infinite) direct sum of finite-dimensional irreducible modules.

\begin{lemma} \label{fin.comp.fac}
Let $f \in \C[t]$ and let $I = \subgrp{f} \unlhd \C[t]$. Then $\h_n(\gI,\C)$ is a finitely semisimple $\g$-module and each irreducible $\g$-summand of $\h_n(\gI,\C)$ is of the form $V(\lambda)$ for some $\lambda \in P^+$ with $\lambda \leq n \theta$.
\end{lemma}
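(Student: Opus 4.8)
The plan is to compute $\h_n(\gI,\C)$ from the Chevalley--Eilenberg chain complex $\big(\Lambda^\bullet(\gI),\partial\big)$ and to track the $\g$-module structure at the level of chains. As noted above, the adjoint action of $\gt$ on $\gI$ extends to a derivation action on each exterior power $\Lambda^n(\gI)$, and this action commutes with the boundary map $\partial$; restricting to $\g = \g\otimes\C.1 \subset \gt$ therefore realizes $\h_n(\gI,\C)$ as a $\g$-subquotient of $\Lambda^n(\gI)$, namely $\ker\partial_n/\im\partial_{n+1}$. Since both finite semisimplicity and the property of having all irreducible constituents among a fixed collection of highest weights are inherited by $\g$-submodules and $\g$-quotients, it suffices to prove the two assertions for the chain module $\Lambda^n(\gI)$ in place of $\h_n(\gI,\C)$.

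Next I would identify the $\g$-module structure on $\Lambda^n(\gI)$. The subalgebra $\g$ acts on $\gI = \g\otimes I$ only through the first tensor factor, by the adjoint representation, so as a $\g$-module $\gI$ is a direct sum of $\dim_\C I$ (in general infinitely many) copies of the adjoint module $\g \cong V(\theta)$, with $I$ serving as a trivial multiplicity space. Consequently $(\gI)^{\otimes n} \cong \g^{\otimes n} \otimes_\C I^{\otimes n}$ is, as a $\g$-module, a direct sum of copies of $V(\theta)^{\otimes n}$. In characteristic zero the antisymmetrization map splits $\Lambda^n(\gI)$ off as a $\g$-direct summand of $(\gI)^{\otimes n}$, so $\Lambda^n(\gI)$ is likewise a direct sum of copies of finite-dimensional $\g$-modules occurring in $V(\theta)^{\otimes n}$; in particular it is finitely semisimple.

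Finally I would establish the weight bound. The module $V(\theta)^{\otimes n}$ has highest weight $n\theta$, and every weight of $V(\theta)^{\otimes n}$ is a sum of $n$ weights of $V(\theta)$, each of which is $\leq \theta$; hence every weight, and in particular the highest weight $\mu$ of any irreducible constituent $V(\mu)$, satisfies $\mu \leq n\theta$. Combining this with the previous step shows that every irreducible $\g$-summand of $\Lambda^n(\gI)$ is of the form $V(\mu)$ with $\mu \leq n\theta$, and the reduction in the first step transfers both conclusions to $\h_n(\gI,\C)$.

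I expect the only genuinely delicate points to be bookkeeping rather than substance: one must confirm that the $\gt$-equivariance of $\partial$ really does endow the homology with the adjoint $\g$-action referred to in the statement, and one must be mildly careful that the standard inheritance of semisimplicity by subquotients continues to hold in the presence of the infinite-dimensional multiplicity spaces $I^{\otimes n}$. The latter causes no difficulty, since submodules and quotients of a semisimple module are semisimple with constituents drawn from those of the original module, irrespective of multiplicities; this is precisely what permits an infinite direct sum of finite-dimensional irreducibles to count as finitely semisimple.
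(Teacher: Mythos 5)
Your proposal is correct and follows essentially the same route as the paper: realize $\h_n(\gI,\C)$ as a $\g$-subquotient of $\Lambda^n(\gI)$ via the equivariance of the Chevalley--Eilenberg differential, relate $\Lambda^n(\gI)$ to $(\gI)^{\otimes n}\cong V(\theta)^{\otimes n}\otimes I^{\otimes n}$, and invoke complete reducibility together with the weight bound $\lambda\leq n\theta$ for constituents of $V(\theta)^{\otimes n}$. The only cosmetic differences are that the paper treats $\Lambda^n(\gI)$ as a quotient of $(\gI)^{\otimes n}$ rather than a summand, and chooses the explicit basis $\set{t^i f}$ of $I$ rather than treating $I$ abstractly as a trivial multiplicity space; neither affects the argument.
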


\begin{proof}
First, let $\fa$ be an arbitrary complex Lie algebra. Write $U(\fa)$ for the universal enveloping algebra of $\fa$. Recall that the tensor product $U(\fa) \otimes \Lambda^\bullet(\fa)$ of $U(\fa)$ and the exterior algebra $\Lambda^\bullet(\fa)$ can be equipped with a differential in such a way that $U(\fa) \otimes \Lambda^\bullet(\fa)$ becomes a $U(\fa)$-free resolution of the trivial $\fa$-module $\C$, called the Koszul (or Chevalley--Eilenberg) resolution, with the left action of $U(\fa)$ on $U(\fa) \otimes \Lambda^\bullet(\fa)$ induced by the left multiplication of $U(\fa)$ on itself. Then $\opH_n(\fa,\C)$ is the $n$-th homology group of the chain complex $\C \otimes_{U(\fa)} (U(\fa) \otimes \Lambda^\bullet(\fa))$. Now suppose that a complex Lie algebra $\fs$ acts on $\fa$ by derivations. Then $U(\fa)$ and $\Lambda^\bullet(\fa)$ naturally become left $\fs$-modules, the Koszul differential makes $U(\fa) \otimes \Lambda^\bullet(\fa)$ into a complex of left $\fs$-modules, and $\opH_n(\fa,\C)$ inherits an $\fs$-module structure.

Specializing to the case $\fa = \gI$ and $\fs = \g$, the previous discussion implies that $\h_n(\gI,\C)$ is a $\g$-module subquotient of $\Lambda^n(\gI)$. Next, $\Lambda^n(\gI)$ is a $\g$-module quotient of $(\gI)^{\otimes n}$. Since $I = \bigoplus_{i \geq 0} \C \cdot t^i f$ as a vector space, we get a corresponding $\g$-module decomposition $\gI = \bigoplus_{i \geq 0} \g \otimes \C \cdot t^i f$. Given $i\geq 0$, set $\g_i = \g \otimes \C \cdot t^i f$. Then $\g_i$ is isomorphic as a $\g$-module to the adjoint representation of $\g$. Now $(\gI)^{\otimes n} \cong \bigoplus_{0 \leq i_1, \ldots, i_n} (\g_{i_1} \otimes \dots \otimes \g_{i_n})$ and $(\g_{i_1} \otimes \dots \otimes \g_{i_n}) \cong \g^{\otimes n} \cong V(\theta)^{\otimes n}$ as $\g$-modules. Since every finite-dimensional representation of $\g$ is completely reducible, $V(\theta)^{\otimes n}$ can be written in the form $\bigoplus_{\lambda \in P^+} V(\lambda)^{\varepsilon(\lambda)}$ with $\varepsilon(\lambda) > 0$ only if $\lambda \leq n\theta$. Combining these observations, it follows that $(\gI)^{\otimes n}$ and hence also $\Lambda^n(\gI)$ and $\h_n(\gI,\C)$ are finitely semisimple $\g$-modules, and each irreducible $\g$-summand of $\h_n (\gI,\C)$ is of the form $V(\lambda)$ for some $\lambda \in P^+$ with $\lambda \leq n \theta$.
\end{proof}

\begin{lemma} \label{lem:duality}
Let $\fa$ be a Lie algebra, let $\fri \subset \fa$ be an ideal of $\fa$, and let $M$ be an $\fa$-module. For each $n \geq 0$, $\h^n (\fri, M^*)$ is isomorphic to $\h_n (\fri, M)^*$ as an $\fa / \fri$-module.
\end{lemma}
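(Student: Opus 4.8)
The plan is to realize both (co)homology groups as the (co)homology of mutually dual Chevalley--Eilenberg complexes and then to invoke exactness of linear duality over the field $\C$.

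First I would recall the standard complexes. The homology $\h_n(\fri,M)$ is the $n$-th homology of the chain complex $C_\bullet(\fri,M) = \Lambda^\bullet(\fri) \otimes M$ equipped with the Chevalley--Eilenberg boundary $\partial$, while $\h^n(\fri,M^*)$ is the $n$-th cohomology of the cochain complex $C^\bullet(\fri,M^*) = \Hom_\C(\Lambda^\bullet(\fri),M^*)$ equipped with the coboundary $d$. The first step is to introduce, for each $n$, the pairing
\[
\langle \phi, \omega \otimes m\rangle = \phi(\omega)(m), \qquad \phi \in \Hom_\C(\Lambda^n \fri, M^*),\ \omega \in \Lambda^n \fri,\ m \in M,
\]
and to observe that the canonical adjunction $\Hom_\C(\Lambda^n \fri, M^*) \cong \Hom_\C(\Lambda^n\fri \otimes M, \C) = C_n(\fri,M)^*$ identifies $C^n(\fri,M^*)$ with the full linear dual of $C_n(\fri,M)$ via this pairing.

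Second, I would verify the adjointness $\langle d\phi, c\rangle = \pm\langle \phi, \partial c\rangle$ relating the two differentials, using the contragredient action $(x\cdot f)(m) = -f(x\cdot m)$ on $M^*$; the bracket terms and the module terms of the two Chevalley--Eilenberg formulas then match up term-by-term once the signs are reconciled. This exhibits $C^\bullet(\fri,M^*)$ as the linear dual cochain complex of $C_\bullet(\fri,M)$. Since $\C$ is a field, the functor $\Hom_\C(-,\C)$ is exact and hence commutes with passage to (co)homology, yielding a natural isomorphism $\h^n(\fri,M^*) \cong \h_n(\fri,M)^*$ of vector spaces for every $n$.

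Third, I would upgrade this to an isomorphism of $\fa/\fri$-modules. Because $\fri$ is an ideal, $\fa$ acts on both $C_\bullet(\fri,M)$ and $C^\bullet(\fri,M^*)$ (adjointly on $\Lambda^\bullet(\fri)$ and through the module structure on $M$, respectively $M^*$); these Lie-derivative actions commute with $\partial$ and $d$, so they descend to $\h_n(\fri,M)$ and $\h^n(\fri,M^*)$, and the Cartan homotopy formula shows that $\fri$ itself acts trivially, which is what produces the $\fa/\fri$-action. It then remains to check that the pairing is infinitesimally $\fa$-invariant, i.e.\ $\langle x\cdot\phi, c\rangle + \langle \phi, x\cdot c\rangle = 0$ for all $x \in \fa$; this is a direct computation using the contragredient action, and it says precisely that the above isomorphism is equivariant for the dual $\fa/\fri$-module structure on $\h_n(\fri,M)^*$. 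I expect the sign bookkeeping in the adjointness identity, together with this invariance check, to be the only genuine work; everything else is formal. One subtlety worth flagging is that the identification of $C^n(\fri,M^*)$ with $C_n(\fri,M)^*$ takes full linear duals and so requires no finiteness hypothesis, which is essential here since $\fri = \gI$ and its homology need not be finite-dimensional.
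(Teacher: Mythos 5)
Your proposal is correct and follows essentially the same route as the paper: both identify the Chevalley--Eilenberg cochain complex computing $\h^\bullet(\fri,M^*)$ with the full linear dual of the chain complex computing $\h_\bullet(\fri,M)$, check that this identification is compatible with the natural $\fa$-action (hence with the induced $\fa/\fri$-action on (co)homology), and conclude by exactness of $\Hom_\C(-,\C)$ over the field $\C$. The only cosmetic difference is that the paper phrases the complexes via the Koszul resolution $U(\fri)\otimes\Lambda^\bullet(\fri)$ before making the same identification, and leaves the equivariance and sign checks implicit where you spell out the invariant pairing.
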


\begin{proof}
As in the first paragraph of the proof of \cref{fin.comp.fac}, let $U(\fri) \otimes \Lambda^\bullet(\fri)$ denote the Koszul resolution for $\fri$. Then considering $M$ as a chain complex concentrated in degree $0$, the tensor product of complexes $[U(\fri) \otimes \Lambda^\bullet(\fri)] \otimes M$ is naturally a $U(\fri)$-projective resolution of $M$. Then $\opH_\bullet(\fri,M)$ is the homology of the chain complex $\C \otimes_{U(\fri)} \left( [U(\fri) \otimes \Lambda^\bullet(\fri)] \otimes M \right)$. For legibility, we identify this chain complex as a vector space with $\Lambda^\bullet(\fri) \otimes M$. The Lie algebra $\fa$ acts via the adjoint action on $\Lambda^\bullet(\fri)$ and acts as given on $M$. Then the diagonal action of $\fa$ on $\Lambda^\bullet(\fri) \otimes M$ induces the natural action of $\fa/\fri$ on $\opH_n(\fri,M)$.

Again using the Koszul resolution, $\Hbul(\fri,M^*)$ can be computed as the cohomology of the cochain complex $\Hom_{U(\fri)}(U(\fri) \otimes \Lambda^\bullet(\fri),M^*)$; when $M^*$ is understood, we will refer to this cochain complex simply as the dual Koszul complex. There are natural vector space isomorphisms
\[
\Hom_{U(\fri)}(U(\fri) \otimes \Lambda^\bullet(\fri),M^*) \cong \Hom_\C(\Lambda^\bullet(\fri),M^*) \cong \Hom_\C(\Lambda^\bullet(\fri) \otimes M,\C),
\]
and one can check that these isomorphisms are compatible with the action of $\fa$. Since $\Hom_\C(-,\C)$ is an exact (contravariant) functor, the cohomology of the cochain complex $\Hom_\C(\Lambda^\bullet(\fri) \otimes M,\C)$ is precisely $\opH_\bullet(\fri,M)^*$. Thus, for each $n \geq 0$, $\opH^n(\fri,M^*) \cong \opH_n(\fri,M)^*$ as $\fa/\fri$-modules.
\end{proof}

The next result follows directly from the previous two lemmas and from the observation that if $\lambda \in P^+$ with $\lambda \leq n\theta$ for some $n \geq 0$, then the highest weight $-w_0\lambda$ of $V(\lambda)^*$ also satisfies the inequality $-w_0\lambda \leq n\theta$.

\begin{lemma}
Let $f \in \C[t]$ and let $I = \subgrp{f} \unlhd \C[t]$. If $\h_n (\gI,\C)$ is finite-dimensional, then $\h^n(\gI,\C)$ is a finite-dimensional $\g$-module and each $\g$-composition factor of $\h^n(\gI,\C)$ is of the form $V(\lambda)$ for some $\lambda \in P^+$ with $\lambda \leq n \theta$. \qed
\end{lemma}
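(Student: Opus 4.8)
The plan is to combine the two preceding lemmas: first use the duality isomorphism of \cref{lem:duality} to transfer the finite-semisimplicity and weight-bound information from homology to cohomology, and then check that the bound $\lambda \le n\theta$ is stable under the duality involution $\lambda \mapsto \lambda^*$.

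First I would apply \cref{lem:duality} with $\fa = \gt$, $\fri = \gI$, and $M = \C$ the trivial module. Since $\C^* \cong \C$ as a $\gt$-module, this yields a $\gt/\gI$-module isomorphism $\h^n(\gI,\C) \cong \h_n(\gI,\C)^*$, which by restriction is in particular an isomorphism of $\g$-modules. Consequently, if $\h_n(\gI,\C)$ is finite-dimensional, then so is its linear dual $\h^n(\gI,\C)$, giving the finite-dimensionality claim at once.

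Next, by \cref{fin.comp.fac} the finite-dimensional $\g$-module $\h_n(\gI,\C)$ decomposes as a finite direct sum $\bigoplus_\lambda V(\lambda)$ in which every summand satisfies $\lambda \le n\theta$. Dualizing and using $V(\lambda)^* \cong V(\lambda^*)$ with $\lambda^* = -w_0\lambda$, I conclude that each $\g$-composition factor of $\h^n(\gI,\C)$ is of the form $V(\lambda^*)$ for some $\lambda \in P^+$ with $\lambda \le n\theta$.

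It then remains to verify the weight bound as stated, i.e.\ that $\lambda \le n\theta$ forces $\lambda^* \le n\theta$; this is the only point requiring care, and it reduces entirely to the self-duality of the adjoint representation. The involution $-w_0$ permutes the simple system $\Delta$ and hence preserves the positive monoid $Q^+$, so it is order-preserving for $\le$; moreover, since $V(\theta)$ is self-dual, the highest root is fixed, $\theta^* = -w_0\theta = \theta$. Therefore $n\theta - \lambda \in Q^+$ implies $n\theta - \lambda^* = -w_0(n\theta - \lambda) \in Q^+$, that is, $\lambda^* \le n\theta$, completing the argument. There is no substantive obstacle beyond this last compatibility check.
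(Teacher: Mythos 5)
Your proposal is correct and follows essentially the same route as the paper: the paper also deduces the lemma by combining \cref{lem:duality} (with $M=\C$) and \cref{fin.comp.fac}, together with the observation that $\lambda \le n\theta$ implies $-w_0\lambda \le n\theta$. Your justification of that last point via $-w_0$ preserving $Q^+$ and fixing $\theta$ is a correct elaboration of what the paper leaves implicit.
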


In Section \ref{section:secondfd} we will show that $\h_2(\gI,\C)$ is finite-dimensional if $I = \subgrp{(t-a)(t-b)}$ and $a \neq b$.

\subsection{Cohomology of \texorpdfstring{$\gtp$}{gtp}} \label{ss:Hn.gtp}

Let $\Lg = \g \otimes \C[t,t^{-1}]$ be the loop algebra. The standard realization for the affine Kac--Moody algebra $\ghat$ begins with a universal central extension $\Lg \oplus \C c$ of the $\Lg$. Then one defines a derivation $d$ of $\Lg \oplus \C c$ that maps $c$ to zero and that acts on the subspace $\g \otimes \C t^n$ of $\Lg$ as multiplication by $n$. Then $\ghat$ is the semi-direct product Lie algebra $\Lg \oplus \C c \oplus \C d$. Explicit formulas for the Lie bracket in $\ghat$ can be found in \cite[13.1.(2)]{Kumar:2002a}. Garland and Lepowsky \cite{Garland:1976} define the affine Lie algebra as $\Lg \oplus \C c$, so what we denote by $\ghat$ they denote by $\g^e$. Similarly, their $\g_S$ is our $\g$, their $\fr$ is our $\g \oplus \C c$, and their $\fre$ is our $\g \oplus \C c \oplus \C d$ (cf.\ \cite[\S\S 2--3]{Garland:1976}). We will use their notation $\fre$ for the algebra $\g \oplus \C c \oplus \C d$.

Set $\hhat = \fh \oplus \C c \oplus \C d$. This is the standard Cartan subalgebra of $\ghat$. Garland and Lepowsky denote it by $\fh^e$, while their $\fh$ is our $\fh \oplus \C c$. Let $h_1,\ldots,h_n$ be the standard basis for $\fh$ corresponding to the set of simple coroots $\alpha_1^\vee,\ldots,\alpha_n^\vee$. Let $h_\theta$ be the element of $\fh$ corresponding to the coroot $\theta^\vee$, where $\theta$ is the highest positive root in $\Phi^+$. Set $h_0 = -h_\theta + c$. The standard simple roots $\alpha_1,\ldots,\alpha_n \in \Delta$ are extended to functionals on $\hhat$ that map $c$ and $d$ to zero. Then root $\theta$ is similarly extended to a linear functional on $\hhat$. Define $\delta \in \hhat^*$ by $\delta(\fh) = \delta(c) = 0$, and $\delta(d) = 1$. Set $\alpha_0 = -\theta + \delta$. The affine root lattice is defined by $\widehat Q = \sum_{i=0}^n \Z\alpha_i$.

The fundamental dominant weights $\omega_0,\ldots,\omega_n \in \hhat^*$ are defined by $\omega_i(h_j) = \delta_{ij}$ (Kronecker delta) and $\omega_i(d) = 0$. Identifying $h_j$ with $\alpha_j^\vee$, we have $\omega_i(\alpha_j^\vee) = \delta_{ij}$. Then the set $\set{\omega_0,\omega_1,\ldots,\omega_n,\delta}$ forms a basis for $\hhat^*$. The affine weight lattice is the abelian group $X = \C \delta + \sum_{i=0}^n \Z \omega_i$, and the subset of dominant weights is $X^+ = \C \delta + \sum_{i=0}^n \N \omega_i$. Set 
\[ \textstyle
P_S = \{ \lambda \in \hhat^*: \lambda(h_i) \in \N \text{ for all } 1 \leq i \leq n \} = \C \delta + \C \omega_0 + \sum_{i=1}^n \N \omega_i.
\]

\begin{proposition} \textup{\cite[Proposition 3.1]{Garland:1976}}
There is a natural bijection, denoted $\lambda \mapsto M(\lambda)$, between $P_S$ and the set of (isomorphism classes of) finite-dimensional irreducible $\fre$-modules that are irreducible as $\g$-modules. The correspondence is described as follows: The highest weight space (relative to $\fh$) of the $\g$-module $M(\lambda)$ is $\hhat$-stable, and $\lambda$ is the resulting weight for the action of $\hhat$.
\end{proposition}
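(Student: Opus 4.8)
The plan is to exploit the extremely simple structure of $\fre$. Reading off the bracket of $\ghat$ from \cite[13.1.(2)]{Kumar:2002a}, the element $c$ is central and $d$ acts as multiplication by $0$ on $\g = \g \otimes \C t^{0}$; since moreover $[c,d]=0$, the plane $\C c \oplus \C d$ is \emph{central} in $\fre$. Thus $\fre \cong \g \oplus (\C c \oplus \C d)$ is the direct product of the simple Lie algebra $\g$ with a two-dimensional abelian Lie algebra, and I would classify its finite-dimensional irreducible modules using only the highest weight theory of $\g$ together with Schur's Lemma for the central part. In particular, on any finite-dimensional irreducible $\fre$-module $c$ and $d$ act by scalars, so the $\fre$-submodules coincide with the $\g$-submodules; hence every finite-dimensional irreducible $\fre$-module is automatically irreducible as a $\g$-module, and the qualifier in the statement is automatically satisfied in our situation.

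Next I would construct the correspondence in both directions. Given $\lambda \in P_S$, set $\nu := \lambda|_\fh$; the defining condition $\lambda(h_i)\in\N$ for $1\le i\le n$ says precisely that $\nu(h_i)\in\N$, i.e.\ $\nu \in P^+$. I define $M(\lambda)$ to be the irreducible $\g$-module $V(\nu)$ on which $c$ and $d$ act by the scalars $\lambda(c)$ and $\lambda(d)$; because $\C c \oplus \C d$ is central, any scalar action of it is compatible with the bracket, so $M(\lambda)$ is a genuine $\fre$-module, irreducible because $V(\nu)$ already is. Conversely, given a finite-dimensional irreducible $\fre$-module $M$, Schur's Lemma makes $c,d$ act by scalars $\gamma,\zeta$, and as a $\g$-module $M\cong V(\nu)$ for a unique $\nu \in P^+$; I recover $\lambda\in\hhat^*$ by declaring $\lambda|_\fh=\nu$, $\lambda(c)=\gamma$, $\lambda(d)=\zeta$. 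The one-dimensional $\fh$-highest weight line of $V(\nu)$ is then $\hhat$-stable (it is $\fh$-stable, and $c,d$ act by scalars), and since $\hhat = \fh\oplus\C c\oplus\C d$ the resulting $\hhat$-weight is exactly this $\lambda$, matching the description in the statement.

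Finally I would check that the two assignments are mutually inverse and land in the claimed sets. Injectivity is immediate: an $\fre$-isomorphism $M(\lambda)\cong M(\lambda')$ is in particular a $\g$-isomorphism, forcing $\lambda|_\fh=\lambda'|_\fh$, while equality of the central scalars gives $\lambda(c)=\lambda'(c)$ and $\lambda(d)=\lambda'(d)$; as a functional on $\hhat=\fh\oplus\C c\oplus\C d$ is determined by these values, $\lambda=\lambda'$. Surjectivity is the backward construction above. To see that the image is exactly $P_S$, I would expand an arbitrary $\mu\in\hhat^*$ in the basis $\set{\omega_0,\ldots,\omega_n,\delta}$: evaluating on $h_k$ and on $d$ and using $\omega_i(h_j)=\delta_{ij}$, $\omega_i(d)=0$, $\delta(d)=1$, together with $\delta(h_j)=0$ (for $h_0$ this uses $h_0=-h_\theta+c$ and $\delta(\fh)=\delta(c)=0$), gives $\mu=\sum_{j=0}^n\mu(h_j)\omega_j+\mu(d)\delta$. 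Hence $\lambda\in P_S=\C\delta+\C\omega_0+\sum_{i=1}^n\N\omega_i$ if and only if $\lambda(h_i)\in\N$ for $1\le i\le n$, which is exactly the dominance of $\nu=\lambda|_\fh$; this closes the loop.

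The argument involves no genuine obstacle, as the content is entirely the unwinding of definitions. The two points needing care are establishing at the outset that $\C c\oplus\C d$ is truly central in $\fre$ (so that Schur's Lemma applies and scalar actions are automatically representations), and the bookkeeping in the last step, where one must match the affine-weight conditions cutting out $P_S$ with dominance of the finite part $\nu$; this hinges on the relation $h_0=-h_\theta+c$ and on $\delta$ and $\omega_0$ pairing as prescribed with the coroots $h_1,\dots,h_n$.
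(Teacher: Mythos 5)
Your argument is correct, but it is worth noting that the paper supplies no proof of this statement at all: it is quoted verbatim from Garland--Lepowsky \cite[Proposition 3.1]{Garland:1976}, where it is established for a general Kac--Moody algebra and an arbitrary subset $S$ of the simple roots, a setting in which $\fre$ does not split off its "extra" Cartan directions as a central factor and the hypothesis "irreducible as a $\g$-module" is a genuine restriction. What you have done is give a self-contained elementary proof in the special case actually used here, where $S$ is the finite-type subset, $\g$ is finite-dimensional simple, and $\fre = \g \oplus \C c \oplus \C d$. Your two key observations are sound: reading the bracket of $\ghat$ shows $[d,\g\otimes\C t^0]=0$ and $[c,\fre]=[c,d]=0$, so $\C c \oplus \C d$ is central and Schur's Lemma makes $c,d$ act by scalars on any finite-dimensional irreducible $\fre$-module, rendering the irreducibility-over-$\g$ qualifier automatic; and the identification of the two descriptions of $P_S$ via the expansion $\mu=\sum_{j=0}^n \mu(h_j)\omega_j+\mu(d)\delta$ in the dual basis to $\{h_0,\dots,h_n,d\}$ correctly matches $\lambda(h_i)\in\N$ for $1\le i\le n$ with dominance of $\nu=\lambda|_\fh$, leaving $\lambda(h_0)$ and $\lambda(d)$ free, as the description $P_S=\C\delta+\C\omega_0+\sum_{i=1}^n\N\omega_i$ requires. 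The trade-off is the usual one: the citation buys generality (and consistency with the rest of the Garland--Lepowsky machinery invoked later, e.g.\ \cref{theorem:Lepowsky}), while your direct argument buys transparency in the only case the paper needs.
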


Let $V$ be a finite-dimensional irreducible $\g$-module. Then $V$ is made an irreducible $\fre$-module by having $c$ and $d$ act as zero. Conversely, every finite-dimensional irreducible $\fre$-module that restricts to $V$ as a $\g$-module can be obtained as the tensor product of $V$ and certain one-dimensional representations for $\C c$ and $\C d$.

Let $R$ be the subspace of $\hhat^*$ spanned by $\alpha_0, \ldots, \alpha_n$. For $0 \leq i \leq n$, define the linear transformation $s_i: R \rightarrow R$ by $s_i (\phi) = \phi - \phi(h_i)\alpha_i$. Then the affine Weyl group $W_a$ is the group of linear automorphisms of $R$ generated by $s_0, \ldots, s_n$. The ordinary finite Weyl group $W$ associated to $\g$ identifies with the subgroup of $W_a$ generated by $s_1, \ldots, s_n$. Let $W_a^1$ be the set of minimal length left coset representatives of $W$ in $W_a$. Let $\wh{\rho} = \omega_0 + \omega_1 + \cdots + \omega_n$, and for $w \in W_a$, set $w \cdot 0 = w \wh{\rho} - \wh{\rho}$. If $w_1,w_2 \in W_a$ and $w_1 \cdot 0 = w_2 \cdot 0$, then $w_1 = w_2$ \cite[Corollary 2.6]{Garland:1976}, and if $w \in W_a^1$, then $w \cdot 0 \in P_S$ \cite[Theorem 8.5]{Garland:1976}. In particular, the modules $M(w \cdot 0)$ for $w \in W_a^1$ are mutually non-isomorphic as $\fre$-modules.

Garland and Lepowsky computed the cohomology ring $\Hbul(\gtp,\C)$ by first studying the standard Koszul complex for $\gt^- := \g \otimes t^{-1}\C[t^{-1}]$ in order to compute the cohomology ring $\Hbul(\gt^-,\C)$. They then applied the natural involution that maps $\gt^-$ isomorphically to $\gtp$. The action of $\fre$ on $\Hbul(\gtp,\C)$ is then induced by the action of $\fre$ on the dual Koszul complex for $\gtp$.

\begin{theorem} \textup{\cite[Theorem 5.7]{Lepowsky:1979}} \label{theorem:Lepowsky}
For each $j \geq 0$, there exists an isomorphism of $\fre$-modules
\[
\opH^j(\gtp,\C) \cong \bigoplus_{\substack{w \in W_a^1, \\ \ell(w)=j}} M(w \cdot 0).
\]
\end{theorem}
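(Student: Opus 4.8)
The statement is the affine analogue of Kostant's theorem on the cohomology of the nilradical of a parabolic, and the plan is to prove it by the harmonic (Laplacian) method of Garland--Lepowsky. First I would identify $\gtp$ as the nilradical of the standard maximal parabolic subalgebra of $\ghat$ whose Levi factor is $\fre=\g\oplus\C c\oplus\C d$: in the root-space decomposition of $\ghat$, the summand $\g\otimes t^n$ is the direct sum of the affine root spaces $\ghat_{\alpha+n\delta}$ over $\alpha\in\Phi\cup\{0\}$, so $\gtp=\bigoplus_{n\geq 1}\g\otimes t^n$ is exactly the sum of those positive affine root spaces whose roots have positive $\alpha_0$-coefficient. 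Since $\fre$ normalizes $\gtp$ and acts on it by derivations, the Chevalley--Eilenberg cochain complex $\Hom_\C(\Lambda^\bullet\gtp,\C)$ computing $\opH^\bullet(\gtp,\C)$ is a complex of $\fre$-modules. As in the description preceding the theorem, it is technically cleaner to run the argument for the opposite nilradical $\gt^-=\g\otimes t^{-1}\C[t^{-1}]$ and then transport the answer to $\gtp$ through the Chevalley involution, which carries $\gt^-$ isomorphically onto $\gtp$.

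The core of the proof is a Kostant-type identification of cohomology with harmonic cochains. Using the standard invariant bilinear form on $\ghat$ and its compact real form, I would equip the cochain complex with a positive-definite Hermitian inner product, define the codifferential $\partial$ as the adjoint of the Koszul differential $d$, and form the Laplacian $L=d\partial+\partial d$. Because $L$ is self-adjoint and commutes with the $\fre$-action, each cohomology class contains a unique harmonic representative, giving an $\fre$-module isomorphism $\opH^j(\gt^-,\C)\cong\ker(L)\cap\Lambda^j$. The decisive input is Kostant's identity, which expresses $L$ through the generalized Casimir operator of $\ghat$: on the $\mu$-weight space of the degree-$j$ cochains, $L$ acts as the scalar $\tfrac12\big(\langle\mu+\wh\rho,\mu+\wh\rho\rangle-\langle\wh\rho,\wh\rho\rangle\big)$, where $\wh\rho=\omega_0+\cdots+\omega_n$. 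Since $L=d\partial+\partial d$ is a nonnegative operator, this scalar is nonnegative and a cochain is harmonic precisely when it vanishes.

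It then remains to identify the harmonic weights and reassemble the $\fre$-module structure. The scalar above vanishes exactly when $\langle\mu+\wh\rho,\mu+\wh\rho\rangle=\langle\wh\rho,\wh\rho\rangle$; as $W_a$ preserves the form, and using that $\wh\rho$ is regular, this forces $\mu+\wh\rho=w\wh\rho$ for a unique $w\in W_a$, that is, $\mu=w\cdot 0$. The additional constraint that $\mu$ actually occurs as a weight of $\Lambda^j$ of the nilradical — a sum of exactly $j$ affine roots lying outside the finite Levi — singles out the minimal-length coset representatives $w\in W_a^1$ and forces the cohomological degree $j$ to equal the inversion count $\ell(w)$. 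A highest-weight analysis then shows that each harmonic weight $w\cdot 0$, which lies in $P_S$ for $w\in W_a^1$ by \cite[Theorem 8.5]{Garland:1976}, generates a copy of the irreducible $\fre$-module $M(w\cdot 0)$ supplied by the preceding proposition, and that these copies exhaust the harmonic space. Transporting through the Chevalley involution yields the asserted isomorphism $\opH^j(\gtp,\C)\cong\bigoplus_{w\in W_a^1,\ \ell(w)=j}M(w\cdot 0)$.

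The principal obstacle, and what makes this strictly harder than finite-dimensional Kostant theory, is that $\gtp$ is infinite-dimensional, so the cochain spaces $\Lambda^j(\gtp)^*$ are infinite-dimensional and neither the existence of harmonic representatives nor the spectral analysis of $L$ is automatic. I would control this with the principal gradation given by the $d$-eigenspace decomposition: each graded piece of $\Lambda^\bullet(\gt^-)$ is finite-dimensional and $L$-stable, the Hermitian inner product restricts to a positive-definite form on it, and the Laplacian/Casimir computation together with the Hodge decomposition can be carried out one graded piece at a time, so that the convergence of the Casimir is never an issue because only finitely many terms act nontrivially on a fixed $d$-weight. Establishing positive-definiteness on the relevant spaces and verifying that the correspondence $\mu=w\cdot 0\leftrightarrow\ell(w)=j$ is exact — neither over- nor undercounting harmonic cochains — are the delicate bookkeeping points that the Kac--Moody framework of Garland--Lepowsky is built to handle.
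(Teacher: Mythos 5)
Your proposal is a correct reconstruction of the Garland--Lepowsky argument, and it matches the route this paper attributes to the original source: the theorem is quoted here from \cite[Theorem 5.7]{Lepowsky:1979} rather than reproved, and the surrounding discussion describes exactly your strategy of analyzing the Koszul complex for $\gt^-$ via Kostant-style harmonic theory (Laplacian, Casimir identity, identification of harmonic weights with $w\cdot 0$ for $w\in W_a^1$) and then transporting to $\gtp$ through the involution. No discrepancy to report.
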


Since $c$ is central in $\ghat$, it acts trivially on the Koszul complex and hence also acts trivially on $\Hbul(\gtp,\C)$. Then by Theorem \ref{theorem:Lepowsky}, $c$ acts trivially on $M(w \cdot 0)$ for each $w \in W_a^1$. For each $w \in W_a^1$, we can write $w\cdot 0 = \lambda_w - d_w \delta$ for some $\lambda_w \in P^+$ and some $d_w \geq 0$. Using \cref{theorem:Lepowsky} and the Cartan involution on $\ghat$ \cite[pp.185,190]{Lepowsky:1979}, it follows for each $j \geq 0$ that there exists a $\g$-module isomorphism
\begin{equation} \label{our.hn.gtp}
\opH^j(\gtp,\C) \cong \bigoplus_{\substack{w \in W_a^1 \\ \ell(w)=j}} V (\lambda_w)^*.
\end{equation}
The natural polynomial grading on $\gtp$, induced by the polynomial grading on $\C[t]$, induces an additional non-negative grading on $\Hbul(\gtp,\C)$ that we refer to as the $t$-degree. Then $V (\lambda_w)^*$ is concentrated in $t$-degree $d_w$.

\begin{example} \label{example:H0.gtp}
To compute $\opH^0(\gtp,\C)$, one must consider elements $w \in W^1_a$ such that $\ell (w) = 0$. Then $w=1$ and $\h^0 (\gtp,\C) \cong M(1\cdot 0) \cong \C$, concentrated in $t$-degree $0$.
\end{example}

\begin{example} \label{example:H1.gtp}
To compute $\opH^1(\gtp,\C)$, one must consider elements $w \in W^1_a$ such that $\ell (w) = 1$. Then $w = s_i$ for some $0 \leq i \leq 1$. If $1 \leq i \leq 1$, then $s_i$ is an element of the left $W$-coset of $1$ in $W_a$. Then the only element in $W^1_a$ of length $1$ is $s_0$, and $\h^1(\gtp,\C)$ is isomorphic as an $\fre$-module to $M(s_0 \cdot 0)$. Since $s_0 \cdot 0 = -\alpha_0 = \theta - \delta$, it follows that $\h^1 (\gtp,\C)$ is isomorphic as a $\g$-module to the coadjoint representation $\g^*$, concentrated in $t$-degree $1$. This can also be seen from \eqref{eq:firstcohomology}
\end{example}

\begin{example} \label{example:H2.gtp}
To compute $\opH^2(\gtp,\C)$, one must consider elements $w \in W^1_a$ such that $\ell (w) = 2$. Then $w$ must have a reduced expression of the form $s_j s_i$ for some $0 \leq i,j \leq n$ with $i \neq j$. If $1 \leq i \leq n$ or if $i=0$ and $s_j s_0 = s_0 s_j$, then $w$ is in the same left $W$-coset as an element of length smaller than $2$. Then $w$ must have the form $s_0 s_j$ for some $1 \leq j \leq n$ satisfying $\alpha_j (\alpha_0^\vee) \neq 0$. Following the conventions of Bourbaki \cite[Plates I-IX]{bourbaki68}, Table \ref{table:alphai-alpha0} lists the indices $j$ satisfying $\alpha_j (\alpha_0^\vee) \neq 0$ in each Lie type. For $0 \leq j \leq n$,
\[
s_0 \cdot (s_j \cdot 0) = s_0 \cdot (-\alpha_j) = ((1-\alpha_j (\alpha_0^{\vee}))\theta - \alpha_j) - (1-\alpha_j (\alpha_0^{\vee}))\delta.
\]
Now $\alpha_j (\alpha_0^{\vee}) = -1$ if $\ghat \not \cong \fsl_2$ and $\alpha_j (\alpha_0^{\vee}) = -2$ if $\ghat \cong \fsl_2$. Thus:
\begin{enumerate}
\item If $\ghat$ is of type $\wh{A}_1$, then $\h^2(\gtp,\C) \cong V(4)$ as $\g$-modules concentrated in $t$-degree $3$.

\item If $\ghat$ is of type $\wh{A}_n$ and $n>1$, then $\h^2(\gtp,\C) \cong V(2\theta - \alpha_1)^* \oplus V(2\theta - \alpha_n)^*$ as $\g$-modules concentrated in $t$-degree $2$.

\item In any other affine Lie type, $\h^2(\gtp,\C) \cong V(2\theta - \alpha_j)^*$ as $\g$-modules concentrated in $t$-degree $2$, with $j$ as in Table \ref{table:alphai-alpha0}.
\end{enumerate}
\end{example}

\begin{table}[htb]
\begin{tabular}{ll}
Affine Lie type & $\alpha_j (\alpha_0^\vee) \neq 0$ \\[.5ex] \hline \\[-1em]
$\wh{A}_1$ & $j=1$ \\
$\wh{A}_n, \ n\geq2$ & $j=1,n$ \\
$\wh{B}_n, \ n\geq2$ & $j=2$ \\
$\wh{C}_n, \ n\geq3$ & $j=1$ \\
$\wh{D}_n, \ n\geq4$ & $j=2$ \\
$\wh{E}_6$ & $j=2$ \\
$\wh{E}_7$ & $j=1$ \\
$\wh{E}_8$ & $j=8$ \\
$\wh{F}_4$ & $j=1$ \\
$\wh{G}_2$ & $j=2$ \\ \hline
\end{tabular}
\smallskip
\caption{}  \label{table:alphai-alpha0}
\end{table}

\subsection{Restriction map} \label{subsection:restriction.map}

According to Feigin \cite{Feigin:1980}, the restriction map $\Hbul(\gt,\C) \rightarrow \Hbul(\g,\C)$ induced by the evaluation homomorphism $\ev_0: \gt \rightarrow \g$ is a ring isomorphism. He states that this result can be deduced from the calculations of Garland and Lepowsky \cite{Garland:1976}, though he provides no details or explanation. It seems likely that Feigin's strategy would have been to take $M = \C$ in \cref{thm:gtcohomologyM}. Then the isomorphism $\opH^\bullet(\gt,\C) \cong \opH^\bullet(\g,\C)$ follows from showing for all $j \geq 1$ that $\opH^j(\gtp,\C)^\g = 0$. 

Recall that, by \cref{theorem:Lepowsky}, $c$ acts trivially on $M(w \cdot 0)$ for all $w \in W_a^1$, so if $w_1,w_2 \in W_a^1$ and $M(w_1 \cdot 0) \cong M(w_2 \cdot 0)$ as $\g$-modules, then also $M(w_1 \cdot 0) \cong M(w_2 \cdot 0)$ as $\g \oplus \C c$-modules. It now follows from the proof of \cite[Lemma 6.8]{Lepowsky:1979} that the modules $M(w \cdot 0)$ for $w \in W_a^1$ are mutually non-isomorphic as $\g$-modules.

\begin{theorem} \label{theorem:gtpginvariants} {\ }
\begin{itemize} 
\item[(a)] $\opH^i(\gtp,\C)^\g = 0$ if $i \geq 1$.
\item[(b)] The restriction map induces an isomorphism $\h^\bullet(\gt,\C) \cong \h^\bullet(\g,\C)$.
\end{itemize}
\end{theorem}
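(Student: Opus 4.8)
The plan is to deduce both statements from the explicit description of $\h^\bullet(\gtp,\C)$ furnished by \cref{theorem:Lepowsky}, together with the relative-cohomology spectral sequence already set up in the proof of \cref{thm:gtcohomologyM}.

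For part (a), I would start from the $\g$-module decomposition \eqref{our.hn.gtp}, which expresses $\h^i(\gtp,\C) \cong \bigoplus_{w \in W_a^1,\ \ell(w)=i} V(\lambda_w)^*$ as a direct sum of irreducible $\g$-modules. Taking $\g$-invariants annihilates every nontrivial irreducible summand, so $\h^i(\gtp,\C)^\g \neq 0$ precisely when one of the $V(\lambda_w)^*$ (equivalently, one of the $M(w\cdot 0)$) is the trivial $\g$-module. The trivial module does occur, namely for $w = 1$, where $M(1 \cdot 0) \cong \C$ sits in $\h^0(\gtp,\C)$ (see \cref{example:H0.gtp}). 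The crucial input is the observation recorded just above the statement: the modules $M(w\cdot 0)$ for $w \in W_a^1$ are mutually non-isomorphic as $\g$-modules. Consequently no $w \neq 1$ can yield the trivial $\g$-module, and since $\ell(w) = 0$ forces $w = 1$, every summand appearing in $\h^i(\gtp,\C)$ for $i \geq 1$ is a nontrivial irreducible. Hence $\h^i(\gtp,\C)^\g = 0$ for $i \geq 1$.

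For part (b), I would invoke \cref{thm:gtcohomologyM} with $A = \C[t]$ (so that $A_+ = t\C[t]$ and $\ggAp = \gtp$) and $M = \C$, giving $\h^n(\gt,\C) \cong \bigoplus_{i+j=n} \h^i(\gtp,\C)^\g \otimes \h^j(\g,\C)$. By part (a) only the $i=0$ summand survives, and since $\h^0(\gtp,\C)^\g \cong \C$ this collapses to $\h^n(\gt,\C) \cong \h^n(\g,\C)$. To see that this isomorphism is the one induced by restriction, I would return to the spectral sequence \eqref{eq:relativespecseq} and use the identification $\h^i(\gt,\g;\C) \cong \h^i(\gtp,\C)^\g$ established at the end of the proof of \cref{thm:gtcohomologyM}: part (a) then forces every column $E_2^{i,\bullet}$ with $i \geq 1$ to vanish, so the sequence degenerates to its single surviving column and $\h^n(\gt,\C) \cong E_\infty^{0,n} = E_2^{0,n} = \h^n(\g,\C)$ via the left-edge map. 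As noted in the proof of \cref{thm:gtcohomologyM}, this edge map is exactly the restriction map along $\g \hookrightarrow \gt$, so the restriction map is an isomorphism.

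The genuinely substantive input is the mutual non-isomorphism of the $M(w\cdot 0)$ as $\g$-modules, which isolates the trivial module to cohomological degree $0$; this is the one place where the full strength of the Garland--Lepowsky analysis (via the cited proof of Lepowsky's Lemma 6.8) is used. Everything else is bookkeeping with the spectral sequence, the only mild subtlety being to verify that the collapse-induced isomorphism is genuinely the restriction map rather than merely an abstract isomorphism of graded vector spaces.
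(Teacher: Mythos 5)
Your proposal is correct and follows essentially the same route as the paper: part (a) rests on the mutual non-isomorphism of the $M(w\cdot 0)$ as $\g$-modules (the observation recorded just before the theorem), which confines the trivial summand to degree $0$, and part (b) then follows from \cref{thm:gtcohomologyM}. Your extra care in checking that the collapse-induced isomorphism really is the restriction map (via the edge map of \eqref{eq:relativespecseq}) is a detail the paper leaves implicit, but it is the same argument.
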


\begin{proof} By the discussion of the previous paragraph, the $M(w \cdot 0)$ for $w \in W_a^1$ are mutually non-isomorphic as $\g$-modules. Thus, the trivial $\g$-module occurs as a $\g$-summand of $\opH^i(\gtp,\C)$ only if $i=0$, where it corresponds to the identity element in $ W_a^1$. This proves part (a).  Part (b) follows from part (a) and \cref{thm:gtcohomologyM}
\end{proof}


\section{\texorpdfstring{$\Ext^{1}$ and $\Ext^{2}$ between simple modules}{Ext1 and Ext2 between simple modules}} \label{section:Ext1-Ext2}

\subsection{} \label{S:spectral}

Let $\pi,\pi' \in \cp$. Observe that since $\pi$ and $\pi'$ are finitely-supported, there exist distinct maximal ideals $\fm_1,\ldots,\fm_n \in \maxspec(A)$ such that $\pi(\fm) = 0 = \pi'(\fm)$ if $\fm \notin \set{\fm_1,\ldots,\fm_n}$. Then we can write $\cv(\pi) = \bigotimes_{i=1}^n \ev_{\fm_i}^* V(\pi(\fm_i))$ and $\cv(\pi') = \bigotimes_{i=1}^n \ev_{\fm_i}^* V(\pi'(\fm_i))$. For the rest of Section \ref{section:Ext1-Ext2}, whenever $\pi,\pi' \in \cp$ are given we will assume that $\fm_1,\ldots,\fm_n$ are distinct elements of $\maxspec(A)$ satisfying these properties.

Set $I = \fm_1 \cdots \fm_n$. Then $\gI$ is an ideal in $\ggA$ that annihilates both $\cv(\pi)$ and $\cv(\pi')$. By the Chinese Remainder Theorem, there exists a ring isomorphism
\begin{equation} \label{eq:CRT}
A/I \cong A/\fm_1 \times A/\fm_2 \times \cdots \times A/\fm_n.
\end{equation}
Then $\ggA/\gI \cong \g \otimes (A/I)$ is isomorphic as a Lie algebra to $\bigoplus_{i=1}^n \g \otimes (A/\fm_i)$. Set $\g_i = \g \otimes (A/\fm_i)$. Observe that $\g_i \cong \g$ as a Lie algebra because $A/\fm_i \cong \C$. Under this identification, $\bigoplus_{i=1}^n \g_i$ acts on $\cv(\pi) = \bigotimes_{i=1}^n \ev_{\fm_i}^* V(\pi_i)$ componentwise, i.e., if $x_1,\ldots,x_n \in \g$ and $v_1 \otimes \cdots \otimes v_n \in \cv(\pi)$, then 
\[
(x_1,\ldots,x_n).(v_1 \otimes \cdots \otimes v_n) = \sum_{i=1}^n v_1 \otimes \cdots \otimes x_i.v_i \otimes \cdots \otimes v_n .
\]

Now consider the Lyndon--Hochschild--Serre (LHS) spectral sequence for the Lie algebra $\ggA$ and the ideal $\gI$:
\begin{equation} \label{eq:LHSforI}
E_2^{i,j} = \Ext_{\ggA/\gI}^i(\cv(\pi),\Ext_{\gI}^j(\C,\cv(\pi'))) \Rightarrow \Ext_{\ggA}^{i+j}(\cv(\pi),\cv(\pi')).
\end{equation}
Notice that the term $\Ext_{\gI}^j(\C,\cv(\pi'))$ can be rewritten as $\h^{j}(\gI,\C)\otimes \cv(\pi')$ since $\gI$ acts trivially on $\cv(\pi')$. From the K\"{u}nneth formula and from the first and second Whitehead Lemmas, it follows that
\begin{align}
E_2^{1,0} &\cong \opH^1(\g^{\oplus n},\cv(\pi^*) \otimes \cv(\pi')) = 0, \ \text{and} \label{eq:LHSE10vanish} \\
E_2^{2,0} &\cong \opH^2(\g^{\oplus n},\cv(\pi^*) \otimes \cv(\pi')) = 0. \label{eq:LHSE20vanish}
\end{align}

\subsection{\texorpdfstring{$\mathbf{Ext^1}$}{Ext1}} \label{subsection:Ext1}

From our analysis of the spectral sequence \eqref{eq:LHSforI} we can now provide a formula for $\Ext_{\ggA}^1(\cv(\pi),\cv(\pi'))$.

\begin{theorem} \label{thm:Ext^1.goA}
Let $\pi, \pi'\in \cp$. Set $I = \fm_1 \cdots \fm_n$, and for each $1 \leq i \leq n$, set $\pi_i=\pi(\fm_i)$, $\pi_i'=\pi'(\fm_i)$, and set $d_i = \dim_{A/\fm_i} I/(\fm_i I)$.
\begin{itemize}
\item[(a)] If $\pi_i$ and $\pi_i'$ differ for two or more indices $1 \leq i \leq n$, then $\Ext_{\ggA}^1(\cv(\pi),\cv(\pi')) = 0$.

\item[(b)] If $\pi_i$ and $\pi_i'$ differ for precisely one index $i$, then
\[
\Ext_{\ggA}^1(\cv(\pi),\cv(\pi')) \cong 
\Hom_\g(\g \otimes V(\pi_i),V(\pi_i'))^{\oplus d_i}.
\]

\item[(c)] If $\pi=\pi'$, then $\Ext_{\ggA}^1(\cv(\pi),\cv(\pi')) \cong \bigoplus_{i=1}^n \Hom_{\g}(\g \otimes V(\pi_i),V(\pi_i))^{\oplus d_i}$.
\end{itemize}
\end{theorem}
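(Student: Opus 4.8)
The plan is to extract the required formula directly from the Lyndon--Hochschild--Serre spectral sequence \eqref{eq:LHSforI}, using the low-degree exact sequence together with the vanishing already established in \eqref{eq:LHSE10vanish} and \eqref{eq:LHSE20vanish}. Because $E_2^{1,0} = 0$, the five-term exact sequence reduces to an isomorphism $\Ext_{\ggA}^1(\cv(\pi),\cv(\pi')) \cong E_2^{0,1}$, where
\[
E_2^{0,1} = \Hom_{\ggA/\gI}\bigl(\cv(\pi), \opH^1(\gI,\C) \otimes \cv(\pi')\bigr).
\]
Using the identification $\ggA/\gI \cong \bigoplus_{i=1}^n \g_i$ and the adjointness $\Hom(\cv(\pi), X \otimes \cv(\pi')) \cong \Hom(\cv(\pi) \otimes \cv(\pi')^*, X) = \Hom(\cv(\pi^*) \otimes \cv(\pi'), X)^{\ast\ast}$-style rewriting, I would reduce the computation to understanding $\opH^1(\gI,\C)$ as a module for $\bigoplus_i \g_i$.

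The key structural input is the description of $\opH^1(\gI,\C)$. First I would note that $\opH^1(\gI,\C) \cong (\gI/[\gI,\gI])^*$, and since $\g$ is perfect one computes $[\gI,\gI] = \g \otimes I^2$ (as $[\g,\g]=\g$ forces the bracket to fill out $\g \otimes I^2$), so that $\opH^1(\gI,\C) \cong \g^* \otimes (I/I^2)^*$ as a $\g$-module, with $\g$ acting via the adjoint (coadjoint) action on the first factor and trivially on $(I/I^2)^*$. The Chinese Remainder decomposition \eqref{eq:CRT} then gives $I/I^2 \cong \bigoplus_{i=1}^n \fm_i/(\fm_i I) \oplus (\text{cross terms that vanish})$; more precisely, because the $\fm_i$ are pairwise comaximal, $I/(\fm_i I) \cong I/(\fm_i I)$ is an $A/\fm_i$-module of dimension $d_i$, and the $i$-th factor $\g_i$ acts via the adjoint action on the corresponding $\g^*$-summand while the other $\g_j$ act trivially there. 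This is the step that distributes the single cohomology group across the $n$ diagonal copies of $\g$, and it is where the multiplicities $d_i$ enter.

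With this decomposition in hand, parts (a), (b), (c) follow by a Schur-type counting argument. The Hom space $\Hom_{\bigoplus \g_i}(\cv(\pi), \opH^1(\gI,\C) \otimes \cv(\pi'))$ factors as a tensor product over $i$ of local contributions, because $\cv(\pi) = \bigotimes_i \ev_{\fm_i}^* V(\pi_i)$, $\cv(\pi')$ factors analogously, and $\opH^1(\gI,\C)$ is a direct sum of summands each supported (nontrivially) on a single index $i$. At the index $i$ carrying the adjoint factor, the local Hom is $\Hom_\g(V(\pi_i), \g^* \otimes V(\pi_i'))^{\oplus d_i} \cong \Hom_\g(\g \otimes V(\pi_i), V(\pi_i'))^{\oplus d_i}$ (using self-duality of the adjoint representation and adjointness); at every other index $j$ the local Hom is $\Hom_\g(V(\pi_j), V(\pi_j'))$, which by Schur's lemma is $\C$ if $\pi_j = \pi_j'$ and $0$ otherwise. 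Thus a nonzero total Hom forces $\pi_j = \pi_j'$ for all $j$ except possibly the single ``adjoint'' index, yielding exactly the trichotomy (a)/(b)/(c); summing over which index carries the adjoint factor produces the direct sum in (c).

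The main obstacle I anticipate is the precise computation of $I/I^2$ as a module over $A/I \cong \prod_i A/\fm_i$ and the verification that the diagonal copy $\g_i$ acts on the $i$-th piece by the adjoint action with the stated multiplicity $d_i = \dim_{A/\fm_i} I/(\fm_i I)$. Establishing that the cross terms $\fm_i/(\fm_i \fm_j)$ for $i \neq j$ contribute nothing (equivalently, that $I/(\fm_i I)$ only feels the $i$-th evaluation) requires care with the comaximality of the $\fm_i$ and the identification of $d_i$ with the local cotangent dimension; once this algebraic bookkeeping is correct, the representation-theoretic counting is routine.
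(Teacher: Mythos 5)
Your proposal is correct and follows essentially the same route as the paper's proof: the five-term exact sequence of \eqref{eq:LHSforI} combined with \eqref{eq:LHSE10vanish} gives $\Ext^1_{\ggA}(\cv(\pi),\cv(\pi')) \cong \Hom_{\ggA/\gI}(\cv(\pi),\opH^1(\gI,\C)\otimes\cv(\pi'))$, the identification $\opH^1(\gI,\C)\cong\Hom_\C(\g\otimes(I/I^2),\C)$ via $[\gI,\gI]=\gI^2$ and the Chinese Remainder Theorem for modules yield $\g\otimes(I/I^2)\cong\bigoplus_i\g_i^{\oplus d_i}$, and the K\"unneth/Schur counting finishes exactly as you describe. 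The step you flag as the main obstacle is handled in the paper simply by invoking the Chinese Remainder Theorem for Modules to get $I/I^2\cong\prod_i I/(\fm_i I)$ compatibly with \eqref{eq:CRT}, so no further work is needed there.
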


\begin{proof} The $5$-term exact sequence of low degree terms in \eqref{eq:LHSforI} yields the isomorphism
\[
\Ext_{\ggA}^1(\cv(\pi),\cv(\pi')) \cong \Hom_{\ggA/\gI}(\cv(\pi),\opH^1(\gI,\C) \otimes \cv(\pi')).
\]

For an arbitrary Lie algebra $\fa$ over $\C$, there is a natural isomorphism 
\begin{equation} \label{eq:firstcohomology}
\opH^1(\fa,\C) \cong \Hom_\C(\fa/[\fa,\fa],\C) .
\end{equation}
Since $\g$ is semisimple, we have $[\g,\g] = \g$ and hence $[\gI,\gI] = \gI^2$. Then $\opH^1(\gI,\C)$ is isomorphic as a $\ggA/\gI$-module to $\Hom_{\C}(\g \otimes (I/I^2),\C)$. Considering $I$ as a module over $A$, we get by the Chinese Remainder Theorem for Modules an isomorphism
\begin{equation} \label{eq:CRTmodules}
I/I^2 \cong I/(\fm_1I) \times I/(\fm_2I) \times \cdots \times I/(\fm_nI),
\end{equation}
which is compatible with \eqref{eq:CRT}. Recall that $d_i = \dim_{A/\fm_i} I/(\fm_iI)$. Then $\g \otimes (I/I^2) \cong \bigoplus_{i=1}^n \g_i^{\oplus d_i}$ as a $\bigoplus_{i=1}^n \g_i$-module, i.e., $\g \otimes (I/I^2)$ is a direct sum of copies of the adjoint representations for the summands in $\bigoplus_{i=1}^n \g_i$. Then
\begin{equation} \label{eq:H1gotimesI} \textstyle
\opH^1(\gI,\C) \cong (\bigoplus_{i=1}^n \g_i^{\oplus d_i})^* \cong \bigoplus_{i=1}^n (\g_i^*)^{\oplus d_i},
\end{equation}
a direct sum of copies of the coadjoint representations for the summands in $\bigoplus_{i=1}^n \g_i$. Now applying the K\"{u}nneth formula, we get
\begin{align*}
\Ext_{\ggA}^1(\cv(\pi),\cv(\pi')) &\cong \bigoplus_{i=1}^n \Hom_{\oplus_{j=1}^n \g_j}(\cv(\pi),(\g_i^*)^{\oplus d_i} \otimes \cv(\pi')) \\
&\cong \bigoplus_{i=1}^n  \left( \Hom_{\g}(V(\pi_i),(\g^*)^{\oplus d_i} \otimes V(\pi_i')) \otimes \bigotimes_{\substack{1 \leq j \leq n \\ j \neq i}} \Hom_{\g}(V(\pi_j),V(\pi_j')) \right) \\
&\cong \bigoplus_{i=1}^n  \left( \Hom_{\g}(\g \otimes V(\pi_i),V(\pi_i'))^{\oplus d_i} \otimes \bigotimes_{\substack{1 \leq j \leq n \\ j \neq i}} \Hom_{\g}(V(\pi_j),V(\pi_j')) \right).
\end{align*}
Recall that $\Hom_\g(V(\pi_j),V(\pi_j')) \cong \C$ if $\pi_j = \pi_j'$ and is zero otherwise. Then the above calculation shows that $\Ext_{\ggA}^1(\cv(\pi),\cv(\pi'))$ is zero unless $\pi_j = \pi_j'$ for all but possibly one value of $j$, proving the first statement of the theorem. The other two statements are also immediate from this calculation.
\end{proof} 

In the special cases of the current and loop algebras, $d_i = 1$ for all $1 \leq i \leq n$. Then Theorem \ref{thm:Ext^1.goA} has the following corollary:

\begin{corollary}
Take $A$ to be either $\C[t]$ or $\C [t,t^{-1}]$. Let $\pi, \pi'\in \cp$, and for each $1 \leq i \leq n$ set $\pi_i=\pi(\fm_i)$ and $\pi_i'=\pi'(\fm_i)$.

\begin{itemize}
\item[(a)] If $\pi_i$ and $\pi_i'$ differ for two or more indices $1 \leq i \leq n$, then $\Ext_{\ggA}^1(\cv(\pi),\cv(\pi')) = 0$.

\item[(b)] If $\pi_i$ and $\pi_i'$ differ for precisely one index $i$, then
\[
\Ext_{\ggA}^1(\cv(\pi),\cv(\pi')) \cong \Hom_\g(\g \otimes V(\pi_i),V(\pi'_i)) .
\]
\item[(c)] If $\pi=\pi'$, then $\Ext_{\ggA}^1(\cv(\pi),\cv(\pi)) \cong \bigoplus_{i=1}^n \Hom_{\g}(\g \otimes V(\pi_i),V(\pi_i))$.
\hfill\qed
\end{itemize}
\end{corollary}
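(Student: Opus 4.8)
The plan is to deduce this corollary directly from \cref{thm:Ext^1.goA} by showing that the multiplicities $d_i = \dim_{A/\fm_i} I/(\fm_i I)$ all equal $1$ when $A = \C[t]$ or $A = \C[t,t^{-1}]$. Once this is established, parts (a), (b), and (c) are exactly the $d_i = 1$ specializations of the corresponding parts of \cref{thm:Ext^1.goA}, with no further argument required.

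To compute the $d_i$, I would first recall that both $\C[t]$ and $\C[t,t^{-1}]$ are principal ideal domains, the latter because it is a localization of the former. The maximal ideals in question have the form $\fm_i = \langle t - a_i \rangle$ (with $a_i \neq 0$ in the Laurent case), so the product $I = \fm_1 \cdots \fm_n$ is the principal ideal generated by $f = \prod_{i=1}^n (t - a_i)$. Since $A$ is an integral domain and $f \neq 0$, the map $A \to I$ sending $a \mapsto af$ is an isomorphism of $A$-modules, so $I$ is free of rank one over $A$.

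Given this, I would identify $I/(\fm_i I)$ with $A/\fm_i$ under the above isomorphism, which carries $\fm_i I$ to $\fm_i$, and conclude that $\dim_{A/\fm_i} I/(\fm_i I) = \dim_{A/\fm_i} A/\fm_i = 1$. Substituting $d_i = 1$ into the three cases of \cref{thm:Ext^1.goA} then yields the stated formulas verbatim.

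The computation here is routine, so there is no serious obstacle; the one conceptual point worth emphasizing is why $d_i = 1$ in these cases but not for a general commutative algebra $A$. The reason is precisely that $\C[t]$ and $\C[t,t^{-1}]$ are PIDs, which forces $I$ to be an invertible (indeed free rank-one) ideal whose fiber dimensions are uniformly $1$. For an arbitrary finitely generated $A$, the residue space $I/(\fm_i I)$ can have larger dimension over $A/\fm_i$, and this is exactly the source of the exponents $\oplus d_i$ that appear in the general statement of \cref{thm:Ext^1.goA}.
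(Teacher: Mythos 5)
Your proposal is correct and follows exactly the paper's route: the paper deduces the corollary from \cref{thm:Ext^1.goA} by simply observing that $d_i = 1$ for the current and loop algebras, and your PID argument is the (routine) justification of that observation. Nothing further is needed.
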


This recovers results of Kodera \cite[Theorem 1.2]{Kodera:2010} in a more concise way. These results were also obtained by Neher and Savage in the context of equivariant map algebras \cite[Theorems 3.7 and 3.9]{NS11}.

\subsection{\texorpdfstring{$\mathbf{Ext^2}$}{Ext2}} \label{subsection:Ext2}

We continue our analysis of the spectral sequence \eqref{eq:LHSforI} to give a description of $\Ext^{2}$ between simple modules. 

\begin{theorem} \label{thm:ext2.gt} 
Let $\pi, \pi'\in \cp$. Set $I = \fm_1 \cdots \fm_n$, and for each $1 \leq i \leq n$, set $\pi_i=\pi(\fm_i)$ and $\pi_i'=\pi'(\fm_i)$. If $\pi_{i} \neq \pi_i'$ for some $1 \leq i \leq n$, then
\[
\Ext_{\ggA}^2(\cv(\pi),\cv(\pi')) \cong \Hom_{\ggA/\gI}(\cv(\pi),\opH^2(\gI,\C) \otimes \cv(\pi')).
\]
\end{theorem}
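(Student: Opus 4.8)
The plan is to read off $\Ext^2_{\ggA}(\cv(\pi),\cv(\pi'))$ from the LHS spectral sequence \eqref{eq:LHSforI}. Its abutment in total degree $2$ is filtered with associated graded pieces $E_\infty^{0,2}$, $E_\infty^{1,1}$, and $E_\infty^{2,0}$. Since $\gI$ acts trivially on $\cv(\pi')$, the $E_2$-page reads $E_2^{i,j} = \Ext_{\ggA/\gI}^i(\cv(\pi),\opH^j(\gI,\C)\otimes\cv(\pi'))$, and I would exploit throughout the identification $\ggA/\gI \cong \bigoplus_{i=1}^n \g_i$ with each $\g_i \cong \g$ simple. The goal is to show that $E_\infty^{2,0} = E_\infty^{1,1} = 0$ and that no differential leaves $E_r^{0,2}$, so that $E_\infty^{0,2} = E_2^{0,2}$. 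Because $E_2^{0,2} = \Hom_{\ggA/\gI}(\cv(\pi),\opH^2(\gI,\C)\otimes\cv(\pi'))$ sits on the left edge, it receives no differentials, so these facts together immediately yield the asserted isomorphism.

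First I would dispose of the two lower contributions. The term $E_\infty^{2,0}$ vanishes because $E_2^{2,0} = 0$ already by \eqref{eq:LHSE20vanish}. For $E_\infty^{1,1}$, recall from \eqref{eq:H1gotimesI} that $\opH^1(\gI,\C) \cong \bigoplus_{i=1}^n (\g_i^*)^{\oplus d_i}$ is finite-dimensional, so $\opH^1(\gI,\C)\otimes\cv(\pi')$ is a finite-dimensional $\bigoplus_{i=1}^n\g_i$-module; the first Whitehead Lemma then gives $E_2^{1,1} = \Ext^1_{\bigoplus_{i=1}^n\g_i}(\cv(\pi),\opH^1(\gI,\C)\otimes\cv(\pi')) = 0$, hence $E_\infty^{1,1} = 0$. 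It remains to see that no differential leaves $E_r^{0,2}$. The only candidates are $d_2\colon E_2^{0,2}\to E_2^{2,1}$ and $d_3\colon E_3^{0,2}\to E_3^{3,0}$, since all higher targets have negative second index. The target $E_2^{2,1} = \Ext^2_{\bigoplus_{i=1}^n\g_i}(\cv(\pi),\opH^1(\gI,\C)\otimes\cv(\pi'))$ vanishes by the second Whitehead Lemma, the coefficient module again being finite-dimensional; thus $d_2 = 0$ on $E_2^{0,2}$ and $E_3^{0,2} = E_2^{0,2}$.

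The crux, and the only place where the hypothesis $\pi_i \neq \pi_i'$ for some $i$ enters, is the vanishing of $d_3$, for which it suffices to prove $E_2^{3,0} = \Ext^3_{\bigoplus_{i=1}^n\g_i}(\cv(\pi),\cv(\pi')) = 0$ (then $E_3^{3,0} = 0$ a fortiori). Here I would combine the Künneth formula for the direct sum $\bigoplus_{i=1}^n\g_i$ with the Chevalley--Eilenberg isomorphism $\opH^m(\g,V) \cong \opH^m(\g,\C)\otimes V^\g$: each summand of $\Ext^3_{\bigoplus_{i=1}^n\g_i}(\cv(\pi),\cv(\pi'))$ factors as a tensor product over $i$ of terms $\Ext^{m_i}_\g(V(\pi_i),V(\pi_i')) \cong \opH^{m_i}(\g,\C)\otimes\Hom_\g(V(\pi_i),V(\pi_i'))$ with $\sum_i m_i = 3$. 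For the distinguished index where $\pi_i \neq \pi_i'$, the factor $\Hom_\g(V(\pi_i),V(\pi_i'))$ is zero for every value of $m_i$, so every Künneth summand vanishes and $E_2^{3,0} = 0$. Consequently $E_\infty^{0,2} = E_2^{0,2}$, and combining with $E_\infty^{1,1} = E_\infty^{2,0} = 0$ gives $\Ext_{\ggA}^2(\cv(\pi),\cv(\pi')) \cong \Hom_{\ggA/\gI}(\cv(\pi),\opH^2(\gI,\C)\otimes\cv(\pi'))$.

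The main obstacle is precisely the control of the $d_3$ differential. Without the hypothesis that $\pi_i \neq \pi_i'$ for some $i$ (for instance in the self-extension case $\pi = \pi'$), the group $E_2^{3,0}$ need not vanish, since $\opH^3(\g,\C) \neq 0$; then $\Ext^2$ would only be the kernel of a possibly nonzero $d_3$ rather than all of $E_2^{0,2}$. This is what forces a genuinely different statement in that case, and is presumably why \cref{thm:self.ext2.ga} is treated separately.
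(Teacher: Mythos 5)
Your proposal is correct and follows essentially the same route as the paper: the LHS spectral sequence \eqref{eq:LHSforI}, the vanishing of $E_2^{2,0}$, $E_2^{1,1}$, and $E_2^{2,1}$ via the Whitehead Lemmas applied to the finite-dimensional coefficient module $\opH^1(\gI,\C)\otimes\cv(\pi')$, and the vanishing of $E_2^{3,0}$ via the K\"unneth formula together with $\Hom_\g(V(\pi_i),V(\pi_i'))=0$ at the distinguished index. Your closing observation about why the hypothesis is needed (and why the case $\pi=\pi'$ requires the separate treatment of \cref{thm:self.ext2.ga}) accurately reflects the structure of the paper.
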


\begin{proof}
Set $V=\cv(\pi)$, set $V'=\cv(\pi')$, and for $1 \leq i \leq n$ set $V_{i}=\ev_{\fm_i}^* V(\pi_i)$ and $V'_{i}=\ev_{\fm_i}^* V(\pi'_i)$. We consider the terms in the $E_2$-page of the spectral sequence \eqref{eq:LHSforI} that contribute to $\Ext_{\ggA}^2(V,V')$. First, $E_2^{2,0} = 0$ by \eqref{eq:LHSE20vanish}. Next, $\opH^1(\gI,\C)$ is a finite-dimensional $\ggA/\gI \cong \g^{\oplus n}$-module by \eqref{eq:H1gotimesI}, so the first Whitehead Lemma implies that $E_2^{1,1} = 0$. Then we are left to consider the space $E_2^{0,2} = \Hom_{\ggA/\gI}(V,\opH^2(\gI,\C) \otimes V')$ and its contribution to $\Ext_{\ggA}^2(V,V')$.

Since $\opH^1(\gI,\C)$ is finite-dimensional, the fist and second Whitehead Lemmas imply that $E_2^{1,1} = E_2^{2,1} = 0$. Then the differentials $d_2^{1,1}: E_2^{1,1} \to E_2^{3,0}$ and $d_2^{0,2}: E_2^{0,2} \rightarrow E_2^{2,1}$ are zero.  So $E_3^{3,0} \cong E_2^{3,0}$, $E_3^{0,2} \cong E_2^{0,2}$, and we conclude that
\[
\Ext_{\ggA}^2(V,V') \cong \ker( d_3: E_2^{0,2} \to E_2^{3,0}).
\]
It follows from the K\"{u}nneth formula and from the first and second Whitehead Lemmas that
\begin{align*}
E_2^{3,0} &= \Ext_{\ggA/\gI}^3(V_1 \otimes V_2 \otimes \cdots \otimes V_n,V_1' \otimes V_2' \otimes \cdots \otimes V_n') \\
&\cong \bigoplus_{j=1}^n \Hom_\g(V_1,V_1') \otimes \cdots \otimes \Ext_\g^3(V_j,V_j') \otimes \cdots \otimes \Hom_\g(V_n,V_n') .
\end{align*}
By hypothesis, there exists an index $i$ such that $\pi_i \neq \pi_i'$. Then $V_i \not\cong V_i'$ as $\g$-modules, hence
\[
(V_i^* \otimes V_i')^\g \cong \Hom_\g(\C,V_i^* \otimes V_i') \cong \Hom_\g(V_i,V_i') = 0.
\]
Combining this observation with \cite[\S24]{CE48}, it follows that
\[
\Ext_\g^3(V_i,V_i') \cong \Ext_\g^3(\C,V_i^* \otimes V_i') = \Ext_\g^3(\C,(V_i^* \otimes V_i')^\g) = 0,
\]
and hence that $E_2^{3,0} = 0$. Then $\Ext_{\ggA}^2(V,V') \cong E_3^{0,2}$, which completes the proof.
\end{proof}

Taking $A=\C[t]$, one obtains the following result:

\begin{corollary} \label{ext2.gt}
Let $a_1, \ldots, a_n \in \C$ with $a_i \neq a_j$ if $i \neq j$, and let $\lambda_1, \ldots, \lambda_n, \mu_1, \ldots, \mu_n \in P^+$. Set $V = \otimes_{i=1}^{n} \ev_{a_i}^* V(\lambda_i)$, set $V' = \otimes_{i=1}^{n} \ev_{a_i}^* V(\mu_i)$, and let $I = \subgrp{(t-a_1) \cdots (t-a_n)} \unlhd \C[t]$. If $\lambda_i \neq \mu_i$ for some $1 \leq i \leq n$, then $\Ext_{\gt}^2(V,V') \cong \Hom_{\gt/\gI}(V,\opH^2(\gI,\C) \otimes V')$. \qed
\end{corollary}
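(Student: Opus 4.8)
The plan is to deduce this corollary as a direct specialization of \cref{thm:ext2.gt} to the case $A = \C[t]$, so the whole task reduces to matching the data of the corollary with the hypotheses of that theorem. First I would identify the combinatorial set-up. In $\C[t]$ the maximal ideals are exactly the principal ideals $\fm_i := \langle t - a_i \rangle$, and these are distinct precisely because the $a_i$ are distinct. The quotient map $\C[t] \to \C[t]/\fm_i \cong \C$ is evaluation at $t = a_i$, so the homomorphism $\ev_{\fm_i}$ coincides with $\ev_{a_i}$ and hence $\ev_{\fm_i}^* V(\lambda_i) = \ev_{a_i}^* V(\lambda_i)$. Defining $\pi,\pi' \in \cp$ by $\pi(\fm_i) = \lambda_i$ and $\pi'(\fm_i) = \mu_i$ (and $0$ elsewhere), I obtain $\cv(\pi) = V$ and $\cv(\pi') = V'$.

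Next I would check that the ideal $I$ of the corollary agrees with the ideal $\fm_1 \cdots \fm_n$ appearing in \cref{thm:ext2.gt}. Since $\C[t]$ is a principal ideal domain and the linear polynomials $t - a_i$ are pairwise coprime (as the $a_i$ are distinct), the product of the ideals $\langle t - a_i \rangle$ is the ideal generated by the product $(t-a_1)\cdots(t-a_n)$; thus $\fm_1 \cdots \fm_n = I$ exactly as defined. Consequently $\gI$, the quotient $\ggA/\gI \cong \gt/\gI$, and the coefficient space $\opH^2(\gI,\C)$ denote the same objects in both statements.

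Finally, the hypothesis translates verbatim: with $\pi_i = \pi(\fm_i) = \lambda_i$ and $\pi_i' = \pi'(\fm_i) = \mu_i$, the assumption that $\lambda_i \neq \mu_i$ for some $i$ is precisely the condition $\pi_i \neq \pi_i'$ for some $i$ required by \cref{thm:ext2.gt}. Applying that theorem then yields $\Ext_{\gt}^2(V,V') \cong \Hom_{\gt/\gI}(V, \opH^2(\gI,\C) \otimes V')$. Since every step is a straightforward unwinding of definitions, there is no genuine obstacle here; the only point demanding even a moment's care is the coprimality argument identifying $\fm_1 \cdots \fm_n$ with $I$, which is exactly what underlies the Chinese Remainder decomposition \eqref{eq:CRT} already invoked in the analysis of the spectral sequence \eqref{eq:LHSforI}.
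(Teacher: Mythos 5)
Your proposal is correct and matches the paper's approach exactly: the paper derives \cref{ext2.gt} as an immediate specialization of \cref{thm:ext2.gt} to $A=\C[t]$ (which is why it carries only a \qed and no written proof), and your identification of the maximal ideals $\fm_i = \langle t-a_i\rangle$, the product ideal $\fm_1\cdots\fm_n = \langle (t-a_1)\cdots(t-a_n)\rangle$, and the functions $\pi,\pi'$ is precisely the intended unwinding.
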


Now let $V = \cv (\pi) = \bigotimes_{i=1}^n \ev_{\fm_i}^* V (\pi(\fm_i))$ be a finite-dimensional simple $\ggA$-module. Since $\g$ is a finite-dimensional simple Lie algebra, the $\ggA$-module $V^* \otimes V$ decomposes into a direct sum of finite-dimensional simple modules:
\begin{equation} \label{eq:tensorproductdecomposition}
V^* \otimes V
= \bigotimes_{i=1}^n \ev_{\fm_i}^* V (\pi(\fm_i)^*) \otimes \ev^*_{\fm_i} V (\pi(\fm_i))
=\bigoplus_{k = 0}^m \cv (\rho_k).
\end{equation}
This decomposition depends only on the decomposition of each $V (\pi(\fm_i)^*) \otimes V (\pi(\fm_i))$ into simple $\g$-modules, and the trivial $\ggA$-module appears only once in this decomposition. Without loss of generality we can assume that $\cv(\rho_0)$ is the trivial $\ggA$-module. For $1 \leq k \leq m$, denote by $I_k$ the ideal $\prod_{\rho_k(\fm_j) \neq 0} \fm_j$, and notice that $\gI_k$ is the annihilator of the simple module $\cv(\rho_k)$.  We can thus extend the results of \cref{thm:ext2.gt} and \cref{ext2.gt} as follows:

\begin{theorem} \label{thm:self.ext2.ga}
Let $\pi \in \cp$. Write $\cv(\pi)^* \otimes \cv (\pi) = \bigoplus_{k=0}^m \cv(\rho_k)$ as in \eqref{eq:tensorproductdecomposition}, where $\cv(\rho_0)$ is the trivial $\ggA$-module, and for each $1 \leq k \leq m$ let $\gI_k$ be the annihilator of $\cv (\rho_k)$ in $\ggA$. Then
\[
\Ext_{\ggA}^2(\cv(\pi),\cv(\pi)) 
\cong \h^2(\ggA,\C) \oplus \bigoplus_{k=1}^m \Hom_{\ggA/\gI_k} (\C,\opH^2(\gI_k,\C) \otimes \cv (\rho_k)).
\]
\end{theorem}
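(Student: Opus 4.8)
The plan is to reduce the computation to the already-established \cref{thm:ext2.gt} by decomposing the coefficient module. First I would invoke the standard isomorphism $\Ext_{\ggA}^2(\cv(\pi),\cv(\pi)) \cong \opH^2(\ggA,\cv(\pi)^* \otimes \cv(\pi))$, which is valid because $\cv(\pi)$ is finite-dimensional, so that $\Hom_\C(\cv(\pi),\cv(\pi)) \cong \cv(\pi)^* \otimes \cv(\pi)$ as $\ggA$-modules. Substituting the decomposition \eqref{eq:tensorproductdecomposition} and using that Lie algebra cohomology is additive in its (finite) direct-sum coefficients, I obtain
\[
\Ext_{\ggA}^2(\cv(\pi),\cv(\pi)) \cong \bigoplus_{k=0}^m \opH^2(\ggA,\cv(\rho_k)).
\]

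Next I would treat the two kinds of summands separately. The $k=0$ term is $\opH^2(\ggA,\cv(\rho_0)) = \opH^2(\ggA,\C) = \h^2(\ggA,\C)$, which produces the first summand in the asserted formula. For each $1 \leq k \leq m$, I would rewrite $\opH^2(\ggA,\cv(\rho_k)) = \Ext_{\ggA}^2(\C,\cv(\rho_k))$ and apply \cref{thm:ext2.gt} with the trivial module $\C = \cv(0)$ in the first argument and $\cv(\rho_k)$ in the second. Since $\cv(\rho_k)$ is nontrivial for $k \geq 1$, we have $\rho_k(\fm) \neq 0$ for at least one $\fm$, so the hypothesis of \cref{thm:ext2.gt} — that the two defining functions differ at some maximal ideal — is satisfied (here the functions are the zero function and $\rho_k$). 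The maximal ideals on which these two functions are jointly supported are precisely the $\fm_j$ with $\rho_k(\fm_j) \neq 0$, so the ideal $I$ furnished by \cref{thm:ext2.gt} coincides with the annihilator $I_k = \prod_{\rho_k(\fm_j) \neq 0} \fm_j$ of $\cv(\rho_k)$. \cref{thm:ext2.gt} then yields
\[
\Ext_{\ggA}^2(\C,\cv(\rho_k)) \cong \Hom_{\ggA/\gI_k}(\C,\opH^2(\gI_k,\C) \otimes \cv(\rho_k)).
\]

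Assembling the $k=0$ contribution with the summands for $1 \leq k \leq m$ gives the claimed isomorphism. This argument is essentially bookkeeping layered on top of \cref{thm:ext2.gt}, so there is no substantive analytic obstacle; the only steps requiring genuine care are the two verifications in the second paragraph. I must confirm that the support of each $\rho_k$ is identified correctly so that the ideal supplied to \cref{thm:ext2.gt} matches the annihilator $\gI_k$ of $\cv(\rho_k)$, and that the nondegeneracy hypothesis of \cref{thm:ext2.gt} holds for \emph{every} nontrivial summand. The trivial summand $\cv(\rho_0)$ must be excluded from the $\gI_k$-indexed family precisely because \cref{thm:ext2.gt} does not apply to $\Ext_{\ggA}^2(\C,\C)$, where the two defining functions agree; its contribution is therefore recorded separately as $\h^2(\ggA,\C)$, which accounts for the shape of the final formula.
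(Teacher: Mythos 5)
Your proposal is correct and follows the same route as the paper: reduce to $\opH^2(\ggA,\cv(\pi)^*\otimes\cv(\pi))$, split along the decomposition \eqref{eq:tensorproductdecomposition}, record the trivial summand as $\h^2(\ggA,\C)$, and apply \cref{thm:ext2.gt} to each nontrivial $\cv(\rho_k)$. Your extra verifications (that the nondegeneracy hypothesis of \cref{thm:ext2.gt} holds for every $k\geq 1$ and that the ideal it produces is the annihilator $\gI_k$) are exactly the points the paper leaves implicit.
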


\begin{proof}
Since $\cv(\pi)$ is finite-dimensional, we have
\[
\Ext^2_{\ggA}(\cv (\pi), \cv(\pi))
\cong \h^2(\ggA,\cv(\pi)^* \otimes \cv(\pi))
\cong \bigoplus_{k=0}^m \h^2(\ggA, \cv (\rho_k)).
\]
By assumption, $\cv(\rho_k)$ is trivial precisely when $k=0$. Then $\h^2(\ggA, \cv (\rho_0)) = \h^2(\ggA,\C)$, and for $1 \leq k \leq m$ we can use \cref{thm:ext2.gt} to obtain
\[
\h^2(\ggA, \cv (\rho_k))
\cong \Hom_{\ggA/\gI_k}(\C,\opH^2(\gI_k,\C) \otimes \cv(\rho_k)). \qedhere
\]
\end{proof}

Specializing to $A=\C[t]$, one obtains the following corollary:

\begin{corollary} \label{cor:self.ext2.gt}
Let $a_1, \ldots, a_n \in \C$ with $a_i \neq a_j$ if $i \neq j$, and let $\lambda_1, \ldots, \lambda_n \in P^+$. Set $V = \otimes_{i=1}^{n} \ev_{a_i}^* V(\lambda_i)$ and write $V^* \otimes V = \C \oplus \bigoplus_{k=1}^m \cv(\rho_k)$ as in \eqref{eq:tensorproductdecomposition}. For each $1 \leq k \leq m$, let $\gI_k$ be the annihilator of $\cv (\rho_k)$ in $\gt$. Then
\[
\Ext_{\gt}^2 (V, V) 
\cong \bigoplus_{k=1}^m \Hom_{\gt/\gI_k} (\C, \opH^2(\gI_k,\C) \otimes \cv (\rho_k)).
\]
\qed
\end{corollary}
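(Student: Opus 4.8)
The plan is to obtain this corollary as a direct specialization of \cref{thm:self.ext2.ga} to the case $A = \C[t]$, the only extra work being to show that the leading summand vanishes. First I would set $A = \C[t]$, so that $\ggA = \gt$ and $V = \bigotimes_{i=1}^n \ev_{a_i}^* V(\lambda_i)$ is a finite-dimensional simple $\gt$-module of precisely the form treated in \cref{thm:self.ext2.ga}; here the distinct scalars $a_1,\dots,a_n$ correspond to distinct maximal ideals $\fm_i = \subgrp{t-a_i}$. Applying that theorem verbatim gives
\[
\Ext_{\gt}^2(V,V) \cong \h^2(\gt,\C) \oplus \bigoplus_{k=1}^m \Hom_{\gt/\gI_k}(\C, \opH^2(\gI_k,\C) \otimes \cv(\rho_k)).
\]
Thus the entire content of the corollary reduces to the single assertion that $\h^2(\gt,\C) = 0$, since this term is exactly the $k=0$ contribution $\h^2(\gt,\cv(\rho_0)) = \h^2(\gt,\C)$ coming from the trivial summand $\cv(\rho_0) = \C$.

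To prove this vanishing I would invoke \cref{theorem:gtpginvariants}(b), which identifies the restriction map induced by $\ev_0 \colon \gt \to \g$ as an isomorphism $\h^\bullet(\gt,\C) \cong \h^\bullet(\g,\C)$ of graded rings. In particular $\h^2(\gt,\C) \cong \h^2(\g,\C)$. Since $\g$ is a finite-dimensional complex simple Lie algebra, the second Whitehead Lemma yields $\h^2(\g,\C) = 0$; equivalently, the exterior-algebra description of $\h^\bullet(\g,\C)$ recalled in the introduction (via \cite{koszul}) has no generators in degrees below $3$, so its degree-$2$ component is zero. Hence $\h^2(\gt,\C) = 0$, the first summand drops out, and the displayed formula remains.

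I do not anticipate any real obstacle: the statement is a formal consequence of the already-established \cref{thm:self.ext2.ga}, and the only point requiring justification is the vanishing of $\h^2(\gt,\C)$, which is supplied entirely by \cref{theorem:gtpginvariants} together with the classical computation of $\h^2(\g,\C)$. The one routine check I would make is that the hypotheses of \cref{thm:self.ext2.ga} genuinely apply for $A = \C[t]$ and that the $k=0$ term is precisely what produces the $\h^2(\gt,\C)$ summand, so that discarding it is legitimate rather than accidentally removing a nontrivial contribution.
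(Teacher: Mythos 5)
Your proposal is correct and is exactly the argument the paper intends: the corollary is \cref{thm:self.ext2.ga} specialized to $A=\C[t]$, with the leading summand $\h^2(\gt,\C)$ discarded because \cref{theorem:gtpginvariants}(b) gives $\h^2(\gt,\C)\cong\h^2(\g,\C)$, which vanishes by the second Whitehead Lemma. The paper leaves this step implicit, but your justification matches it precisely.
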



\section{\texorpdfstring{Finite-dimensionality of $\h_2 (\gI,\C)$ and $\opH^2 (\gI,\C)$}{Finite-dimensionality of H2(g otimes I) and H2(g otimes I)}} \label{section:secondfd}

\subsection{}

From now through Section~\ref{ss:h2.sl2} we assume that $A = \C[t]$, $f = (t-a)(t-b) \in \C[t]$, $a \neq b$, and $I = \subgrp{f} \unlhd \C[t]$. Then by \cref{fin.comp.fac}, $\h_2(\gI,\C)$ is a finitely-semisimple $\g$-module, and each irreducible $\g$-summand of $\h_2(\gI,\C)$ is of the form $V(\lambda)$ for some dominant weight $\lambda \leq 2\theta$. In particular, up to multiplicities there are only finitely many distinct irreducible summands occurring in $\h_2(\gI,\C)$. The main result of this section is the following theorem:

\begin{theorem} \label{thm:H^2(goI).fd}
Suppose $f = (t-a)(t-b) \in \C[t]$, $a \neq b$, and $I = \subgrp{f} \unlhd \C[t]$. Then $\h_2(\gI,\C)$ and $\h^2 (\gI,\C)$ are finite-dimensional.
\end{theorem}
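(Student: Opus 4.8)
The plan is to prove the homological statement first and deduce the cohomological one, then to reduce the homological statement to the finite-dimensionality of a single cyclic homology space. By the lemma immediately following \cref{lem:duality}, finite-dimensionality of $\h^2(\gI,\C)$ follows formally once we know that $\h_2(\gI,\C)$ is finite-dimensional, so I would concentrate on the latter. Here \cref{fin.comp.fac} already tells us that $\h_2(\gI,\C)$ is finitely semisimple over $\g$ and that only the finitely many types $V(\lambda)$ with $\lambda \le 2\theta$ can occur; thus the real content is that each $\g$-isotypic multiplicity space is finite-dimensional, equivalently that $\h_2(\gI,\C)$ is finite-dimensional as a vector space.

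Next I would realize $I$ as the augmentation ideal of a unital algebra amenable to computation. Set $B := \C\cdot 1 \oplus I \subset \C[t]$, so that $I = B_+$ and $\gI = (\g \otimes B)_+$. Since $a \neq b$, one checks that $B = \set{g \in \C[t] : g(a) = g(b)}$ is generated as a $\C$-algebra by $u := f$ and $v := tf$ subject to the single relation $(v-au)(v-bu) = u^3$; in other words $B$ is the coordinate ring of an irreducible rational curve with a single node (the two branches corresponding to $t=a$ and $t=b$), with normalization $\C[t]$. Because $B$ is the unitalization of the non-unital algebra $I$, the first cyclic homology of $I$ is computed by $\HC_1(B)$.

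The main tool is Zusmanovich's computation of the second homology of current Lie algebras \cite[Theorem 0.1]{Z94}, applied to $\g$ (finite-dimensional simple) and the commutative algebra $I$. This expresses $\h_2(\gI,\C)$ as a direct sum of a term isomorphic to (copies of) the first cyclic homology $\HC_1$ of the associated algebra together with further terms assembled from the finite-dimensional data $I/I^2$ and from the (finite-dimensional) spaces of invariant tensors on $\g$. All contributions of the second kind are manifestly finite-dimensional; as a sanity check, the $\g$-invariant part of $\h_2(\gI,\C)$ matches $\HC_1(B)$, as one also sees from the Hochschild--Serre spectral sequence for the semidirect decomposition $\g \otimes B = \g \ltimes \gI$. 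Consequently the finite-dimensionality of $\h_2(\gI,\C)$ is reduced to the single assertion that $\HC_1(B)$ is finite-dimensional.

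The heart of the argument, and the step I expect to be the main obstacle, is this computation of $\HC_1(B) \cong \Omega^1_{B/\C}/dB$ for the nodal ring $B$, carried out in \cref{prop:HC1}. The module of K\"ahler differentials $\Omega^1_{B/\C}$ has generic rank one but acquires torsion concentrated at the node; comparing $\Omega^1_{B/\C}$ with $\Omega^1_{\C[t]/\C}$ through the normalization map and tracking the conductor, one finds that this torsion, and hence $\HC_1(B)$, is finite-dimensional. Feeding this back through Zusmanovich's formula gives the finite-dimensionality of $\h_2(\gI,\C)$, and the lemma after \cref{lem:duality} then yields that of $\h^2(\gI,\C)$.
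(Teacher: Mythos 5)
Your overall skeleton (prove the homological statement, deduce the cohomological one by \cref{lem:duality}, reduce via \cref{fin.comp.fac} to finiteness of the isotypic multiplicities, and handle the trivial isotypic piece through the unitalization $B=\C\oplus I$ and $\HC_1$) matches the paper for the \emph{invariant} part of the argument, and your identification of $B$ as the coordinate ring of a nodal curve, with $\HC_1(B)\cong\Omega^1_{B/\C}/dB$ computed via the normalization, is a legitimate (and arguably slicker) alternative to the explicit wedge computation of \cref{prop:HC1}. But there is a genuine gap at the central step: Zusmanovich's \cite[Theorem 0.1]{Z94} is a statement about $\g\otimes A$ for a commutative algebra $A$ \emph{with unit}, and it cannot be applied to the non-unital ideal $I$ in the way you propose. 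Applied to the unital algebra $B=\C\oplus I$ it computes $\h_2(\g[B],\C)\cong B(\g)\otimes\HC_1(B)$, which via \cref{thm:gtcohomologyM} identifies only with $\h^2(\gI,\C)^\g$, i.e.\ the trivial isotypic component. There is no formula of the shape you describe expressing all of $\h_2(\gI,\C)$ as finite-dimensional invariant-tensor data tensored with $\HC_1$ and $I/I^2$: indeed $B(\g)=S^2(\g)_\g$ is a trivial $\g$-module, while \cref{thm:H^2.sl2oI} shows that for $\g=\fsl_2$ the space $\h^2(\gI,\C)$ contains the nontrivial factors $V(4)\boxtimes\C$, $\C\boxtimes V(4)$, and $V(2)\boxtimes V(2)$. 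Your proposal gives no control over these components, and the assertion that the non-$\HC_1$ contributions are ``manifestly finite-dimensional'' is precisely what needs proof.

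The paper's treatment of the nontrivial isotypic components is entirely different and constitutes the bulk of Section~\ref{section:secondfd}: it runs the Hochschild--Serre spectral sequence \eqref{LHS.E1} for the subalgebra $\gfp\cong\gtp$ of $\gI$, and bounds the three relevant terms separately --- $E^1_{0,2}$ by Garland--Lepowsky finite-dimensionality of $\h_2(\gtp,\C)$, $E^1_{2,0}$ by proving that $\Lambda^2(\gtp)$ is a finitely generated $\gtp$-module, and $E^\infty_{1,1}$ by the long Chevalley-basis computation of \cref{lem:h^1++*.fd} showing that the nonzero weight spaces of $\h^1(\gtp,(\gtp)^*)$ are finite-dimensional. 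Some substitute for this analysis is required; without it your argument establishes only that $\h^2(\gI,\C)^\g$ is finite-dimensional, not the theorem.
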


The two finite-dimensionality claims of the theorem are equivalent by \cref{lem:duality}. Our proof of \cref{thm:H^2(goI).fd} spans the rest of Section \ref{section:secondfd} and involves showing that each of the finitely many distinct irreducible summands occurring in $\h_2(\gI,\C)$ must have finite multiplicity.

The following corollary is a direct consequence of \cref{thm:H^2(goI).fd} and \cref{ext2.gt,cor:self.ext2.gt}. Here and throughout the rest of Section \ref{section:secondfd}, we write $\ev_a$ and $\ev_b$ to denote the evaluation homomorphisms $\C[t] \rightarrow \C$ that evaluate $t$ at $a$ and $b$, respectively.

\begin{corollary} \label{cor:Ext2simplesfd}
Let $\lambda_1, \lambda_2, \mu_1, \mu_2 \in P^+$. Let $a, b \in \C$ with $a \neq b$, and set $V = \ev^*_a V(\lambda_1) \otimes \ev^*_b V(\lambda_2)$ and $V' = \ev^*_a V(\mu_1) \otimes \ev^*_b V(\mu_2)$. Then $\Ext_{\gt}^2(V,V')$ is finite-dimensional. \hfill\qed
\end{corollary}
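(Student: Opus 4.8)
The plan is to derive \cref{cor:Ext2simplesfd} by combining the $\Ext^2$-reduction theorems of Section~\ref{section:Ext1-Ext2} with the finite-dimensionality result \cref{thm:H^2(goI).fd}. The statement to prove is about two simple modules $V$ and $V'$, each a tensor product of at most two evaluation modules supported at the points $a$ and $b$. Two cases arise naturally depending on whether $V \cong V'$ as $\gt$-modules. Since $a \neq b$, the maximal ideals $\fm_a = \subgrp{t-a}$ and $\fm_b = \subgrp{t-b}$ are distinct, so by the Chinese Remainder Theorem the relevant ideal $I = \subgrp{(t-a)(t-b)}$ has quotient $\C[t]/I \cong \C \times \C$, and the annihilator considerations of Section~\ref{section:Ext1-Ext2} apply directly.

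First I would treat the case $V \not\cong V'$, i.e.\ $\lambda_i \neq \mu_i$ for at least one $i \in \set{1,2}$. Here \cref{ext2.gt} applies with $n=2$, $a_1=a$, $a_2=b$, giving
\[
\Ext_{\gt}^2(V,V') \cong \Hom_{\gt/\gI}(V,\opH^2(\gI,\C) \otimes V').
\]
By \cref{thm:H^2(goI).fd}, the space $\opH^2(\gI,\C)$ is finite-dimensional, and $V$ and $V'$ are finite-dimensional by construction; hence the $\Hom$-space on the right, being a subspace of the finite-dimensional space $\Hom_\C(V,\opH^2(\gI,\C)\otimes V')$, is finite-dimensional. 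This settles the case of distinct modules.

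Next I would handle the self-extension case $V \cong V'$ (so $\lambda_i = \mu_i$ for both $i$). Here \cref{cor:self.ext2.gt} applies: writing $V^* \otimes V = \C \oplus \bigoplus_{k=1}^m \cv(\rho_k)$ as in \eqref{eq:tensorproductdecomposition}, one has
\[
\Ext_{\gt}^2(V,V) \cong \bigoplus_{k=1}^m \Hom_{\gt/\gI_k}(\C,\opH^2(\gI_k,\C)\otimes \cv(\rho_k)),
\]
where each $\gI_k$ is the annihilator of $\cv(\rho_k)$. Since $V$ is a tensor product of at most two evaluation modules, each $\rho_k$ is supported at a subset of $\set{\fm_a,\fm_b}$, so each ideal $I_k$ is one of $\subgrp{1}=\C[t]$, $\fm_a$, $\fm_b$, or $I=\fm_a\fm_b$. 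For $I_k = \fm_a$ or $\fm_b$ the space $\opH^2(\g\otimes\fm_a,\C)$ is finite-dimensional since $\g\otimes\fm_a \cong \gtp$ and $\opH^2(\gtp,\C)$ is finite-dimensional by \cref{example:H2.gtp}; for $I_k = I$ it is finite-dimensional by \cref{thm:H^2(goI).fd}. In each summand $\cv(\rho_k)$ is finite-dimensional, so each $\Hom$-space is finite-dimensional, and the sum is finite because $m$ is finite. This completes the argument.

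I do not expect a genuine obstacle here, since the corollary is a formal consequence of results already established; the only point requiring care is the bookkeeping in the self-extension case, namely verifying that every annihilator ideal $\gI_k$ arising from $V^*\otimes V$ is one for which the second cohomology is already known to be finite-dimensional. This is guaranteed by the hypothesis that $V$ involves at most two evaluation points, which restricts the possible supports of the $\rho_k$ and hence the possible ideals $I_k$ to the short list above.
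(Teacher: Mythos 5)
Your proposal is correct and follows exactly the route the paper intends: the paper states the corollary as a direct consequence of \cref{thm:H^2(goI).fd} together with \cref{ext2.gt} (for $V \not\cong V'$) and \cref{cor:self.ext2.gt} (for $V \cong V'$), which is precisely your case split. Your additional bookkeeping in the self-extension case --- noting that each annihilator ideal $I_k$ is one of $\fm_a$, $\fm_b$, or $\fm_a\fm_b$, with $\opH^2(\g\otimes\fm_a,\C)$ finite-dimensional via the isomorphism $\g\otimes\fm_a\cong\gtp$ and Garland--Lepowsky --- correctly fills in the details the paper leaves implicit.
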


\subsection{}

The ideal $\gI$ admits the vector space decomposition
\[
\gI = \left( \bigoplus_{i>0} \g \otimes \C.f^i \right) \oplus \left( \bigoplus_{i>0}  \g \otimes \C . tf^i \right)
\]
that is also a $\g$-module decomposition. Set $\gfp = \bigoplus_{i>0} \g \otimes \C.f^i$. Then $\gfp$ is a Lie subalgebra of $\gI$ that is isomorphic as a Lie algebra to $\gtp$, and $\bigoplus_{i>0}  \g \otimes \C . tf^i$ is isomorphic as a $\gfp$-module to the adjoint representation of $\gfp$.

Now consider the Hochschild--Serre spectral sequence \cite[\S 2]{HS53} for $\gI$ and its subalgebra $\gfp$:
\begin{equation} \label{LHS.E1}
E^1_{p,q} \cong \h_q (\gfp, \Lambda^p( (\gI)/\gfp)) \Rightarrow \h_{p+q} (\gI,\C).
\end{equation}
We will show that all of the terms on the $E^\infty$-page of \eqref{LHS.E1} that contribute to $\h_2 (\gI,\C)$ are finite-dimensional. First, $E^1_{0,2} \cong \h_2 (\g [f]^+,\C)$ is finite-dimensional because $\gfp \cong \gtp$ as Lie algebras and $\h^2 (\gtp,\C)$ is finite-dimensional by Theorem~\ref{theorem:Lepowsky}. Then $E^\infty_{0,2}$ is also finite-dimensional.

Next we will show that $E^1_{2,0} \cong \h_0 (\gtp, \Lambda^2 (\gtp)) \cong \C \otimes_{\gtp} \Lambda^2(\gtp)$ is finite-dimensional by proving that $\Lambda^2 (\gtp)$ is a finitely-generated $\gtp$-module.

\begin{lemma}
$\Lambda^2 (\gtp)$ is a finitely generated $\gtp$-module.
\end{lemma}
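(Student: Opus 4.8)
The plan is to exhibit a finite generating set for $\Lambda^2(\gtp)$ as a module over $U(\gtp)$, where $\gtp = \g \otimes t\C[t]$ acts on $\Lambda^2(\gtp)$ by the Lie bracket extended as derivations. The key structural fact I would use is the $t$-grading: write $\gtp = \bigoplus_{i \geq 1} \g_{(i)}$ where $\g_{(i)} = \g \otimes \C.t^i \cong \g$ as a $\g$-module, so that $\Lambda^2(\gtp)$ decomposes into graded pieces $\bigoplus_{i \leq j} \g_{(i)} \wedge \g_{(j)}$. The action of an element $x \otimes t^k \in \g_{(k)}$ raises $t$-degree by $k$, and in particular the subalgebra $\g \otimes t\C[t]$ contains $\g_{(1)} = \g \otimes \C.t$, whose repeated action can raise degree arbitrarily while staying inside $U(\gtp)$.

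First I would make precise how the bracket acts: for $x \otimes t^k \in \gtp$ and a wedge $(y \otimes t^i)\wedge(z \otimes t^j) \in \Lambda^2(\gtp)$, the derivation action sends it to $([x,y]\otimes t^{i+k})\wedge(z\otimes t^j) + (y\otimes t^i)\wedge([x,z]\otimes t^{j+k})$. The crucial observation is that acting by $\g_{(1)}$ repeatedly lets me move any wedge in low $t$-degree up to higher degrees. Concretely, I would aim to show that the graded pieces in $t$-degrees $2,3,4,\dots$ are all generated (over $U(\gtp)$, in fact essentially over $U(\g_{(1)})$) by the finitely many wedges living in the lowest degrees — the pieces $\g_{(1)}\wedge\g_{(1)}$, $\g_{(1)}\wedge\g_{(2)}$, and so forth, up to some bounded total degree. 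Because each $\g_{(i)}$ is finite-dimensional (isomorphic to $\g$), only finitely many graded pieces need to serve as generators, so the resulting generating set is finite.

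The heart of the argument is a \emph{surjectivity} claim: that the map $U(\gtp)\otimes S \to \Lambda^2(\gtp)$ is onto for a suitable finite set $S$ of low-degree generators. To establish this I would argue by induction on $t$-degree. Given a homogeneous element $w$ of $t$-degree $d$, I want to produce it (modulo already-generated lower-degree material) by applying elements of $\g_{(1)}$ to wedges of degree $d-1$. The content here is that the operator ``act by $\g\otimes t$'' realizes, up to surjectivity onto each new graded layer, the transition $\g_{(i)}\wedge\g_{(j)} \to \g_{(i+1)}\wedge\g_{(j)} \oplus \g_{(i)}\wedge\g_{(j+1)}$, and I must check that by combining such moves (and using that $\g\otimes t$ acts on each tensor factor as the adjoint action of $\g$ together with a degree shift) one can hit every summand. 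Equivalently, since $\g_{(i)}\wedge\g_{(j)} \cong \Lambda^2\g$ or $\g\otimes\g$ as a $\g$-module, and $\g\otimes\g = S^2\g\oplus\Lambda^2\g$ contains the trivial and adjoint summands needed to generate, the degree-raising action is surjective onto the relevant weight spaces.

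\textbf{The main obstacle} I anticipate is verifying this surjectivity onto each new graded layer, i.e.\ controlling the image of the degree-$1$ raising operators and checking that no summand of $\g_{(i)}\wedge\g_{(j)}$ escapes. The subtlety is that acting by a single $x\otimes t$ on a decomposable wedge produces a \emph{sum} of two terms in adjacent positions, so one cannot simply read off one summand at a time; one must show that by taking suitable linear combinations of the actions of different $x\otimes t$, and possibly using that $[\g,\g]=\g$ so that the adjoint action is ``rich enough,'' the full graded piece in degree $d$ is reached from degree $d-1$. A clean way to package this, which I would attempt, is to observe that $\gtp$ is generated as a Lie algebra by $\g_{(1)}$ (since $[\g\otimes t^i,\g\otimes t]\supseteq [\g,\g]\otimes t^{i+1}=\g\otimes t^{i+1}$), so every element of $\gtp$ in high degree is already a bracket of $\g_{(1)}$ with lower-degree elements, and the derivation/Leibniz structure then propagates this to $\Lambda^2(\gtp)$, reducing everything to a bounded range of low degrees.
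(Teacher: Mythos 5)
Your overall framework---the $t$-grading on $\Lambda^2(\gtp)$, induction on total $t$-degree, and the use of degree-raising bracket actions to generate each graded layer from lower ones---is exactly the paper's framework. But the load-bearing step, which you yourself flag as ``the main obstacle,'' is not actually closed, and the shortcut you propose to close it does not work. The claim that ``$\gtp$ is generated as a Lie algebra by $\g\otimes t$, so the derivation/Leibniz structure propagates this to $\Lambda^2(\gtp)$'' is not a valid implication: Lie-algebra generation of $L$ by a subspace $V$ does not formally imply that $\Lambda^2(L)$ is generated as a $U(L)$-module by wedges of bounded degree. Writing $a=[v,a']$ and using $v\cdot(a'\wedge b)=a\wedge b+a'\wedge[v,b]$ lowers the degree of the first factor only at the cost of raising the degree of the second, so the naive reduction is circular; and your main route---acting by $\g\otimes t$ on $\Lambda^2_{(d-1)}$ and checking surjectivity onto $\Lambda^2_{(d)}$---runs into precisely the two-term problem you identify: $(x\otimes t)\cdot\bigl((y\otimes t^i)\wedge(z\otimes t^j)\bigr)$ lands in the sum of two adjacent pieces $\g_{(i+1)}\wedge\g_{(j)}$ and $\g_{(i)}\wedge\g_{(j+1)}$, so one must set up a careful triangularity/elimination argument over the pieces of $\Lambda^2_{(d)}$, which you have not done. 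As written, the proposal asserts the surjectivity rather than proving it.

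The paper closes this gap with a different and cleaner choice of raising operators. Instead of raising degree by $1$, it acts by $x\otimes t^{d-2i}$ (degree $d-2i$) on the \emph{diagonal} pieces $\g_{(i)}\wedge\g_{(i)}$, which have total degree $2i<d$ and are therefore covered by the induction hypothesis. The key point is that when the two wedge factors lie in the \emph{same} graded piece, the two Leibniz terms combine into a single antisymmetrized expression
$(y\otimes t^i)\wedge([x,z]\otimes t^{d-i})-(z\otimes t^i)\wedge([x,y]\otimes t^{d-i})$
landing entirely in $\g_{(i)}\wedge\g_{(d-i)}$, with no spillover into other pieces; since $[\g,\g]=\g$, these expressions span the whole piece. (A separate degree-$1$ relation handles the middle diagonal piece $\g_{(D)}\wedge\g_{(D)}$ when $d=2D$.) If you want to salvage your degree-$1$-raising version, you would need to order the summands $\g_{(i)}\wedge\g_{(d-i)}$ of $\Lambda^2_{(d)}$ and show the raising operators are ``upper triangular with surjective diagonal'' with respect to that order; the paper's diagonal trick is the device that makes this bookkeeping unnecessary.
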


\begin{proof}
The polynomial grading on $\gt$ induces a corresponding $t$-grading on $\Lambda^2 (\gtp)$. Denote the $d$-th graded component of $\Lambda^2(\gtp)$ by $\Lambda^2_{(d)}$. Note that $\Lambda^2_{(d)} = 0$ unless $d \geq 2$. We will show for all $d \geq 3$ that $\Lambda^2_{(d)}$ is generated as a $\gtp$-module by $\Lambda^2_{(2)}$.

First observe that $\gtp = \bigoplus_{i>0} \g \otimes \C.t^i$ as vector spaces. Set $\g_i = \g \otimes \C.t^i$. Then there exists a natural vector space isomorphism
\[
\Lambda^2(\gtp) \cong \left( \bigoplus_{i > 0} \Lambda^2 (\g_i) \right) \oplus \left( \bigoplus_{0<i<j} \g_i \wedge \g_j \right),
\]
with
\[
\Lambda^2_{(d)} \cong \begin{cases}
\bigoplus_{0<i<d/2} \g_{i} \wedge \g_{d-i} & \text{if $d$ is odd,} \\
\Lambda^2 (\g_{d/2}) \oplus ( \bigoplus_{0<i<d/2} \g_{i} \wedge \g_{d-i} ) & \text{if $d$ is even.}
\end{cases}
\]
We now argue by induction on $d$, treating separately the cases when $d$ is odd and when $d$ is even. Throughout the argument, let $x,y,z \in \g$.

If $D \geq 1$ and $d=2D+1$ is odd, then one can use the relation
\[
(x\otimes t^{2(D-i)+1}) \cdot ((y \otimes t^i) \wedge (z \otimes t^i)) =
(y \otimes t^i) \wedge ([x,z] \otimes t^{d-i}) - (z \otimes t^i) \wedge ([x,y] \otimes t^{d-i})
\]
to see that $\g_{i} \wedge \g_{d-i}$ is generated as a $\g$-module by $\g_{i} \wedge \g_{i}$ for $0 < i \leq D$.

If $D \geq 2$ and $d=2D$ is even, then one can use the relation
\[
(x\otimes t^{2(D-i)}) \cdot ((y \otimes t^i) \wedge (z \otimes t^i)) =
(y \otimes t^i) \wedge ([x,z] \otimes t^{d-i}) - (z \otimes t^i) \wedge ([x,y] \otimes t^{d-i})
\]
to see that $\g_{i} \wedge \g_{d-i}$ is generated as a $\g$-module by $\g_{i} \wedge \g_{i}$ for $0 < i < D$, and one can use
\[
(x\otimes t) \cdot ((y \otimes t^{D-1}) \wedge (z \otimes t^D)) =
([x,y] \otimes t^D) \wedge (z \otimes t^D) - (y \otimes t^{D-1}) \wedge ([x,z] \otimes t^{D+1})
\]
to see that $\g_{D} \wedge \g_{D}$ is generated as a $\g$-module by $\g_{D-1} \wedge \g_{D}$ and $\g_{D-1} \wedge \g_{D+1}$.

Combining the previous observations, it follows that $\Lambda^2(\gtp)$ is generated as a $\gtp$-module by $\Lambda^2_{(2)}$. Since $\Lambda^2_{(2)} \cong \g_1 \wedge \g_1 \cong \g \wedge \g$ is finite-dimensional, $\Lambda^2 (\gtp)$ is then generated as a $\gtp$-module by any finite vector space basis for $\g \wedge \g$.
\end{proof}

Since $E^1_{2,0}$ is finite-dimensional, then $E^\infty_{2,0}$ must be finite-dimensional as well.

\subsection{} \label{SS:cyclic}

Finally we consider the term $E^\infty_{1,1}$. Recall that $E^\infty_{1,1}$ is a $\g$-module subquotient of $\h_2 (\gI,\C)$, which by \cref{fin.comp.fac} is finitely semisimple and has (up to multiplicitites) finitely many distinct irreducible $\g$-summands. So to show that $E^\infty_{1,1}$ is finite-dimensional, it suffices to prove that each irreducible $\g$-summand has finite multiplicity.

We start by showing that the $\fh$-weight space $(E^1_{1,1})_\lambda$ is finite-dimensional for each $\lambda \in P^+ \setminus \{0\}$. This will imply that the multiplicity of each non-trivial irreducible $\g$-summand in $E^\infty_{1,1}$ is finite. In fact, by \cref{lem:duality} it suffices to prove the following result:

\begin{lemma} \label{lem:h^1++*.fd}
If $\lambda \in P^+ \setminus \{0\}$, then $\h^1(\gtp, (\gtp)^*)_\lambda$ is finite-dimensional.
\end{lemma}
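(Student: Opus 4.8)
The plan is to compute $\h^1(\gtp, (\gtp)^*)$ directly from the dual Koszul complex, tracking the $\fh$-weight $\lambda$-component and the polynomial $t$-grading simultaneously. First I would recall that $\h^1(\gtp,M)$ for any $\gtp$-module $M$ is the quotient $Z^1/B^1$, where $Z^1 \subseteq \Hom_\C(\gtp, M)$ is the space of cocycles (derivations) and $B^1$ is the subspace of inner derivations. Since $\gtp$ is infinite-dimensional but graded, the key observation is that everything is graded by the $t$-degree, and I would argue that for a \emph{fixed} $\fh$-weight $\lambda \neq 0$, only finitely many graded pieces can contribute. Concretely, a $1$-cochain $\varphi \colon \gtp \to (\gtp)^*$ of $\fh$-weight $\lambda$ sends $\g \otimes t^i$ (which carries $\fh$-weights ranging over $\Phi \cup \{0\}$, each with bounded multiplicity) into the weight-$\lambda+\cdot$ pieces of $(\gtp)^*$, and since $(\gtp)^*$ has weights bounded by the structure of the coadjoint representation in each $t$-degree, the weight constraint forces the relevant root-string data into a bounded range.

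\textbf{The main reduction.}
The cleaner route, which I would actually pursue, is to exploit the $\fre$-module structure and the fact that $(\gtp)^*$ is itself built from the cohomology computation. More precisely, I would use that $\gtp \cong \gt^-$ carries a weight grading by the affine Cartan $\hhat$, so that $\h^1(\gtp, (\gtp)^*)$ decomposes as a direct sum over $\hhat$-weights. The $\fh$-weight $\lambda$ together with the $t$-degree $d$ determines a full $\hhat$-weight $\lambda - d\delta$ (using $c$ acts trivially). The crucial point is that for fixed $\lambda \in P^+ \setminus \{0\}$, I claim only finitely many values of $d$ yield a nonzero contribution to the weight-$(\lambda - d\delta)$ space. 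To see this, I would write out the cochain complex in the relevant weights: a cocycle of weight $\lambda - d\delta$ is determined by its values on the finitely many weight vectors of $\gtp$ of each relevant weight, and the cocycle condition relates different $t$-degrees. The finiteness comes from the fact that $(\gtp)^*$, as the restricted dual, has each weight space finite-dimensional, and the weight $\lambda - d\delta$ with $\lambda$ fixed and $d \to \infty$ eventually leaves the support of the relevant cochain spaces.

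\textbf{Executing the weight-space bound.}
In detail, I would bound $\dim \h^1(\gtp,(\gtp)^*)_\lambda$ by bounding $\dim Z^1_\lambda$. Identifying $C^1 = \Hom_\C(\gtp,(\gtp)^*) \cong \Hom_\C(\gtp \otimes \gtp, \C)$, the weight-$\lambda$ part in the $\fh$-grading, subject also to having a fixed total $t$-degree contribution, lives inside $\bigoplus_{i,j > 0} \Hom_\C(\g_i \otimes \g_j, \C)_\lambda$ where each summand is a single copy of $(\g \otimes \g)^*$ in the $\lambda$-weight space, hence finite-dimensional. The cocycle (Maurer--Cartan) condition, which involves the bracket $[\g_i, \g_j] \subseteq \g_{i+j}$, is homogeneous of $t$-degree $0$, so it preserves the total $t$-degree $d = i+j$ of the target. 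Thus $Z^1_\lambda = \bigoplus_d (Z^1_\lambda)_{(d)}$, and I must show $(Z^1_\lambda)_{(d)} = 0$ for $d$ large. Here I would use the explicit structure of the coadjoint representation: a weight-$\lambda$ element of $(\gtp)^*$ supported in high $t$-degree pairs against $\g \otimes t^i$ only for $i$ in a window determined by $\lambda$ and the finitely many $\fh$-weights of $\g$.

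\textbf{The main obstacle.}
The hard part will be establishing the vanishing $(Z^1_\lambda)_{(d)} = 0$ for all sufficiently large $d$, rather than merely finite-dimensionality of each graded piece (which is automatic). The danger is that, unlike the trivial-coefficient case handled by Garland--Lepowsky, the coadjoint coefficients $(\gtp)^*$ are themselves spread across all $t$-degrees, so a cocycle could in principle be supported arbitrarily high. I expect the resolution to come from the cocycle identity forcing a high-degree cocycle to be cohomologous to a lower-degree one via an inner derivation, i.e., from showing the inner derivations $B^1_\lambda$ absorb the high-degree cocycles. I would verify this by checking that the inner derivation map $(\gtp)^*_\lambda \to Z^1_\lambda$ is surjective onto the high-$t$-degree part, leveraging that the $\fh$-weight $\lambda$ is nonzero and hence the adjoint action of a suitable Cartan element $h \in \fh$ acts invertibly by the scalar $\lambda(h) \neq 0$ on the weight-$\lambda$ space, giving a contracting homotopy in high degrees.
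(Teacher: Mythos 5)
Your overall skeleton --- grade the cochain complex $\Hom_\C(\gtp,(\gtp)^*)\cong(\gtp\otimes\gtp)^*$ by total $t$-degree, note that each graded piece of the weight-$\lambda$ subcomplex is finite-dimensional, and then show that in all but finitely many $t$-degrees every weight-$\lambda$ cocycle is a coboundary --- is the shape of the paper's argument. But the step that carries all the weight is the last one, and the mechanism you propose for it does not exist. The Cartan homotopy formula $L_x = d\circ\iota_x+\iota_x\circ d$ is available only for $x$ an element of the Lie algebra whose cohomology is being computed; here $\fh\subset\g\otimes\C.1$ is \emph{not} contained in $\gtp=\g\otimes t\C[t]$, so $\iota_h$ is undefined on $\Hom_\C(\Lambda^\bullet(\gtp),(\gtp)^*)$ and the operator ``act by $h$'' is not null-homotopic. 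The $\g$-action on $\h^\bullet(\gtp,M)$ is precisely the outer action through which these groups acquire nontrivial weight spaces (already $\h^1(\gtp,\C)\cong\g^*$), so ``$h$ acts invertibly by $\lambda(h)\neq 0$'' carries no cohomological consequence. Worse, if your contracting homotopy did exist it would force $\h^1(\gtp,(\gtp)^*)_\lambda=0$ for every $\lambda\neq 0$ (the eigenvalue $\lambda(h)$ is independent of the $t$-degree, so restricting to ``high degrees'' does not help), which is far stronger than the lemma and is not what the paper's proof yields: it leaves a specific finite list of potentially nonzero classes. Replacing $h$ by $h\otimes t^m\in\gtp$ does give a null-homotopic operator, but that operator shifts the $t$-degree rather than acting by a scalar, so it provides no contraction either.

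A secondary issue: your intermediate target ``$(Z^1_\lambda)_{(d)}=0$ for $d$ large'' is false in general --- when $-\lambda$ is a root, the coboundaries $d^0\xi$ with $\xi\in((\gtp)^*)_\lambda$ already produce nonzero weight-$\lambda$ cocycles in every $t$-degree --- so the correct statement, to which you do eventually pivot, is $(Z^1_\lambda)_{(d)}=(B^1_\lambda)_{(d)}$ for $d\gg 0$. The paper establishes this by an explicit computation: a long list of consequences of the cocycle identity $\varphi([a,b];c)=\varphi(a;[b,c])-\varphi(b;[a,c])$ in a Chevalley basis reduces all values $\varphi(a\otimes t^k;b\otimes t^\ell)$ to finitely many constants together with the single infinite family $\varphi(h_\gamma\otimes t;x_\alpha\otimes t^k)$ and $\varphi(h_\gamma\otimes t;y_\alpha\otimes t^k)$ for $k\geq 1$, and that family is then absorbed by the coboundary of an explicitly constructed $\xi\in(\gtp)^*$. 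Some argument of this concrete kind is what the lemma actually requires; as written, your proposal restates the needed step (surjectivity of $d^0$ onto the high-degree cocycles) and then justifies it with a homotopy that is not there.
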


\begin{proof}
Throughout this proof we will make the following identifications without further comment: Given vector spaces $V$ and $W$, we identify $\Hom_\C (V, W^*)$ with $(V \otimes W)^*$ and also with the set of bilinear maps $V \times W \rightarrow \C$. Given $T \in \Hom_\C (V, W^*)$ and $v \in V$, we will denote $T(v) \in W^*$ by $T(v;-)$, i.e., if $w \in W$ then $T(v)(w) = T(v;w)$.

By definition, $\h^1 (\gtp, (\gtp)^*)$ is the quotient of $\ker(d^1)$ by $\im(d^0)$, where 
\[
d^0: (\gtp)^* \rightarrow \Hom_\C (\gtp, (\gtp)^*)
\]
is defined for $f \in (\gtp)^*$ and $x,y \in \gtp$ by $d^0 (f) (x) (y) = -f ([x,y])$, and
\[
d^1 : \Hom_\C (\gtp, (\gtp)^*) \rightarrow \Hom_\C (\Lambda^2 (\gtp), (\gtp)^*)
\]
is defined for $\phi \in \Hom_\C(\gtp,(\gtp)^*)$ and $x,y,z \in \gtp$ by
\[
d^1 (\phi) (x \wedge y)(z) = -\phi([x,y];z) - \phi(y; [x,z]) + \phi(x; [y,z]).
\]
Thus, if $\varphi \in \ker (d^1)$, then it must satisfy the following relation for all $a,b,c \in \gtp$: 
\begin{equation} \label{eq:kerd1}
\varphi([a,b]; c) = \varphi(a; [b,c]) - \varphi (b; [a,c]).
\end{equation}

Fix a Chevalley basis $\{ y_\alpha, h_{\gamma}, x_\alpha : \alpha \in \Phi^+, \gamma \in \Delta \}$ for $\g$ with $x_\alpha \in \g_\alpha$, $y_\alpha \in \g_{-\alpha}$, $h_{\gamma} \in \fh$, and $[x_{\gamma}, y_{\gamma}] = h_{\gamma}$ for all $\gamma \in \Delta$. More generally, set $h_{\alpha}=[x_{\alpha},y_{\alpha}]$ for $\alpha \in \Phi^{+}$.  We will use \eqref{eq:kerd1} first to show that $\varphi( a \otimes t; b \otimes t^L)$ for any $\varphi \in \ker(d^1)$, $L \geq 2$, and certain basis elements $a, b \in \g$, can be written in terms of $\varphi( a' \otimes t^k; b' \otimes t^{L'})$ for some $a', b' \in \g$, $L'<L$, and $k = 2$ or $L$.  Namely, for any $\alpha, \beta \in \Phi^+$, $\gamma \in \Delta$, and $h \in \fh$ subject to the indicated constraints, we have:
\allowdisplaybreaks{
\begin{align*}
\varphi(x_\alpha \otimes t; x_\alpha \otimes t^L)
&= - \varphi (x_\alpha \otimes t^L; x_\alpha \otimes t) \\
\beta(h) \varphi( x_\alpha \otimes t; x_\beta \otimes t^L)
&= \varphi ([x_\beta, x_\alpha] \otimes t^2; h\otimes t^{L-1})
\tag*{$\beta(h) \neq \alpha (h) = 0$,} \\
\alpha (h) \varphi (x_\alpha \otimes t; h_\gamma \otimes t^L)
&= \alpha (h) \varphi([x_\alpha, x_\gamma] \otimes t^2; y_\gamma \otimes t^{L-1}) - \varphi([x_\gamma, [x_\alpha, y_\gamma]]\otimes t^2; h \otimes t^{L-1}) \\
& \tag*{$\alpha(h) \neq \gamma(h) = 0$,} \\
\beta(h) \varphi( x_\alpha \otimes t; y_\beta \otimes t^L)
&= \varphi ([x_\alpha, y_\beta] \otimes t^2; h\otimes t^{L-1})
\tag*{$\beta(h) \neq \alpha (h) = 0$,} \\
\beta(h) \varphi( y_\alpha \otimes t; x_\beta \otimes t^L)
&= \varphi ([x_\beta, y_\alpha] \otimes t^2; h\otimes t^{L-1})
\tag*{$\beta(h) \neq \alpha (h) = 0$,} \\
\alpha (h) \varphi (y_\alpha \otimes t; h_\gamma \otimes t^L)
&= \alpha (h) \varphi([y_\gamma, y_\alpha] \otimes t^2; x_\gamma \otimes t^{L-1}) + \varphi([y_\gamma, [x_\gamma, y_\alpha]]\otimes t^2; h \otimes t^{L-1}) \\
& \tag*{$\alpha (h) \neq \gamma (h) = 0$,} \\
\varphi(y_\alpha \otimes t; y_\alpha \otimes t^L)
&= - \varphi (y_\alpha \otimes t^L; y_\alpha \otimes t) \\
\beta(h) \varphi( y_\alpha \otimes t; y_\beta \otimes t^L)
&= \varphi ([y_\alpha, y_\beta] \otimes t^2; h\otimes t^{L-1})
\tag*{$\beta(h) \neq \alpha (h) = 0$.}
\intertext{
Now, we will use \eqref{eq:kerd1} to show that $\varphi( a \otimes t^k; b \otimes t^\ell)$ for any $\varphi \in \ker(d^1)$, basis elements $a, b \in \g$, $k\ge 2$, and $\ell \ge 1$, can be written in terms of $\varphi( h \otimes t; b' \otimes t^{\ell'})$ for some $h \in \mathfrak{h}$, $b' \in \g$ and $\ell'>1$.  Namely, for any $\alpha,\beta \in \Phi^+$, $\gamma \in \Delta$, and $h \in \fh$ subject to the indicated constraints, we have:
}
\varphi(x_\alpha \otimes t^k; x_\alpha \otimes t^\ell) 
&= 0
\tag*{$k + \ell \geq 4$,} \\
\alpha(h) \varphi(x_\alpha \otimes t^k; x_\beta \otimes t^\ell) 
&= \varphi(h \otimes t; [x_\alpha, x_\beta] \otimes t^{k + \ell -1})
\tag*{$\alpha(h) \neq \beta(h) = 0$,} \\
\varphi(x_\alpha \otimes t^k; h_\gamma \otimes t^\ell) 
&= - \varphi(h_\gamma \otimes t; x_\alpha \otimes t^{k+\ell-1})
\tag*{$\alpha(h_\gamma) \neq 0$,} \\
\alpha(h) \varphi(x_\alpha \otimes t^k; y_\beta \otimes t^\ell) 
&= \varphi(h \otimes t; [x_\alpha, y_\beta] \otimes t^{k+\ell-1})
\tag*{$\alpha(h) \neq \beta(h) = 0$,} \\
\gamma(h) \varphi(h_\gamma \otimes t^k; x_\beta \otimes t^\ell)
&= \varphi(h \otimes t; [x_\gamma, [y_\gamma, x_\beta]] \otimes t^{k+\ell-1}) - \gamma (h) \varphi(y_\gamma \otimes t; [x_\gamma, x_\beta] \otimes t^{k+\ell-1}) \\
& \tag*{$\beta(h) = \gamma(h) \neq 0$, $k \geq 3$,} \\
\gamma(h) \varphi(h_\gamma \otimes t^k; y_\beta \otimes t^\ell)
&= \varphi(h \otimes t; [y_\gamma, [x_\gamma, y_\beta]] \otimes t^{k+\ell-1}) + \gamma (h) \varphi (x_\gamma \otimes t; [y_\gamma, y_\beta] \otimes t^{k+\ell-1}) \\
& \tag*{$\beta(h) = \gamma(h) \neq 0$, $k \geq 3$,} \\
\alpha(h) \varphi(y_\alpha \otimes t^k; x_\beta \otimes t^\ell) 
&= - \varphi(h \otimes t; [y_\alpha, x_\beta] \otimes t^{k + \ell -1}), 
\tag*{$\alpha(h) \neq \beta(h) = 0$,} \\
\varphi(y_\alpha \otimes t^k; h_\gamma \otimes t^\ell) 
&= - \varphi(h_\gamma \otimes t; y_\alpha \otimes t^{k+\ell-1}) 
\tag*{$\alpha (h_\gamma) \neq 0$,} \\
\varphi(y_\alpha \otimes t^k; y_\alpha \otimes t^\ell) 
&= 0
\tag*{$k + \ell \geq 4$,} \\
\alpha(h) \varphi(y_\alpha \otimes t^k; y_\beta \otimes t^\ell) 
&= - \varphi(h \otimes t; [y_\alpha, y_\beta] \otimes t^{k+\ell-1})
\tag*{$\alpha(h) \neq \beta(h) = 0$.}
\end{align*}
}

Suppose $\varphi \in \ker(d^1)_\lambda$ for some $\lambda \neq 0$. Then by weight considerations,
\[
\varphi(h \otimes t^k; h' \otimes t^\ell)
= \varphi(x_\alpha \otimes t^k; y_\alpha \otimes t^\ell)
= \varphi(y_\alpha \otimes t^k; x_\alpha \otimes t^\ell)
= 0
\]
for all $h, h' \in \fh$, $\alpha \in \Phi^+$, and $k,\ell \ge 1$. Hence, using the Chevalley basis and relations above, $\varphi$ can be completely determined by the following constants (for all $\alpha, \beta \in \Phi^{+}$, $\gamma \in \Delta$, $k \geq 1$):
\begin{gather*}
\varphi(h_{\gamma} \otimes t; x_\alpha \otimes t^k) \ \text{and} \ 
\varphi(h_{\gamma} \otimes t; y_\alpha \otimes t^k); \\
\varphi(x_\alpha \otimes t; x_\beta \otimes t), \ 
\varphi(x_\alpha \otimes t; h_{\gamma} \otimes t), \ \text{and} \ 
\varphi(x_\alpha \otimes t; y_\beta \otimes t); \\
\varphi(y_\alpha \otimes t; x_\beta \otimes t), \ 
\varphi(y_\alpha \otimes t; h_{\gamma} \otimes t), \ 
\text{and} \ 
\varphi(y_\alpha \otimes t; y_\beta \otimes t);  \\
\varphi(x_\alpha \otimes t; x_\alpha \otimes t^2)\ \text{and} \ 
\varphi(y_\alpha \otimes t; y_\alpha \otimes t^2).
\end{gather*}

Let $\xi$ be any element in $(\gtp)^*$ satisfying
\[
\xi (x_\alpha \otimes t^k) 
= \frac 1 2 \varphi(h_\alpha \otimes t; x_\alpha \otimes t^{k-1})
\quad \text{and} \quad
\xi (y_\alpha \otimes t^k) 
= -\frac 1 2 \varphi(h_\alpha \otimes t; y_\alpha \otimes t^{k-1})
\]
for all $\alpha \in \Phi^+$ and $k \ge 2$. Observe that, by the construction of $\xi$ and the relations above,
\[
(d^0 \xi) (h_{\gamma} \otimes t; x_\alpha \otimes t^k)
= \varphi(h_{\gamma} \otimes t; x_\alpha \otimes t^k) \quad \text{and} \quad
(d^0 \xi) (h_{\gamma} \otimes t; y_\alpha \otimes t^k)
= \varphi(h_{\gamma} \otimes t; y_\alpha \otimes t^k)
\]
for all $\alpha \in \Phi^+$, $\gamma \in \Delta$, and $k\geq 2$. This leaves us with finitely many possibly linearly independent elements in $\ker(d^1)/\im(d^0)$, namely the duals of the following basis elements in $\gtp \otimes \gtp$:
\begin{gather*}
(h_{\gamma} \otimes t) \otimes (x_\alpha \otimes t)\ 
\text{and} \ 
(h_{\gamma} \otimes t) \otimes (y_\alpha \otimes t);\\
(x_\alpha \otimes t) \otimes (x_\beta \otimes t),\ 
(x_\alpha \otimes t) \otimes (h_{\gamma} \otimes t),\ 
\text{and} \ 
(x_\alpha \otimes t) \otimes (y_\beta \otimes t);\\
(y_\alpha \otimes t) \otimes (x_\beta \otimes t),\ 
(y_\alpha \otimes t) \otimes (h_{\gamma} \otimes t),\ 
\text{and} \ 
(y_\alpha \otimes t) \otimes (y_\beta \otimes t);\\
(x_\alpha \otimes t) \otimes (x_\alpha \otimes t^2)\ 
\text{and} \ 
(y_\alpha \otimes t) \otimes (y_\alpha \otimes t^2);
\end{gather*}
for $\alpha, \beta \in \Phi^+$ and $\gamma \in \Delta$. Thus we conclude for each nonzero $\lambda \in P^+$ that $\h^1 (\gtp, (\gtp)^*)_\lambda$ is finite-dimensional.
\end{proof}

As a consequence of \cref{lem:h^1++*.fd} and the previous observations on the finite-dimensionality of $E_{0,2}^\infty$ and $E_{2,0}^\infty$, we can now conclude that, as a $\g$-module, $\h_2(\gI,\C)$ can be written in the form $V \oplus W$, where $V$ is a (perhaps trivial) finite direct sum of nontrivial irreducible $\g$-modules and $W$ is a (perhaps infinite-dimensional) trivial $\g$-module. Then by \cref{lem:duality}, $\h^2(\gI,\C)$ is isomorphic as a $\g$-module to $V^* \oplus W^*$. In particular, $W^*$ is a trivial $\g$-module summand of $\h^2(\gI,\C)$. To complete the proof that $\h_2(\gI,\C)$ is finite-dimensional, we will now show that $\h^2(\gI,\C)^\g$ is finite-dimensional (and hence so is $W$).

The algebra structure on $\C[t]$ induces on $A:= \C \oplus I$ the structure of an associative $\C$-algebra; we consider $A$ as an augmented algebra with augmentation ideal $I$. Since $\g$ is a simple complex Lie algebra, one has $\opH^0(\g,\C) = \C$ and $\opH^1(\g,\C) = \opH^2(\g,\C) = 0$. Then by Theorem \ref{thm:gtcohomologyM},
\[
\h^2 (\ggA,\C) \cong \h^2(\gI,\C)^\g,
\]
so we will turn our attention to $\h^2 (\ggA,\C)$. By Lemma \ref{lem:duality}, $\h^2 (\ggA,\C)$ and $\h_2(\ggA,\C)$ are either both infinite-dimensional or both of the same finite dimension. Using \cite[Theorem 0.1]{Z94} and the fact that $\g$ is simple, one has
\begin{equation} \label{eq:Zus}
\h_2 (\ggA,\C) \cong B(\g) \otimes \HC_1 (A),
\end{equation}
where $B(\g)$ is the space of coinvariants for the adjoint action of $\g$ on the second symmetric power $S^2(\g)$, and $\HC_1 (A)$ is the first cyclic homology of $A$ (defined below). Since $\g$ is a finite-dimensional simple Lie algebra, we have $S^2(\g) / [\g \cdot S^2(\g)] \cong \C$ (see, for instance, \cite[\textsection 0.3]{FGT08}), giving $\h_2 (\ggA,\C) \cong \HC_1 (A)$. 

In the following result we show that $\HC_1 (A)$ is finite-dimensional. Consequently, $\opH^2(\gI,\C)^\g$ is finite-dimensional and the multiplicity of the trivial $\g$-module in $E^\infty_{1,1}$ is finite, finishing the proof of \cref{thm:H^2(goI).fd}.

\begin{proposition} \label{prop:HC1}
Let $a_1,a_2 \in \C$ be distinct, and set $f = (t-a_1)(t-a_2) \in \C[t]$. If $I = \subgrp{f} \unlhd \C[t]$ and $A$ is the subalgebra $\C \oplus I$ of $\C[t]$, then $\dim \HC_1 (A) = 2$.
\end{proposition}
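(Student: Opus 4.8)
The plan is to reduce the computation of $\HC_1(A)$ to one with Kähler differentials. For a commutative unital $\C$-algebra, Connes' periodicity sequence together with the isomorphism $HH_1(A) \cong \Omega^1_{A/\C}$ identifies the boundary map $B \colon \HC_0(A) = A \to HH_1(A)$ with the universal derivation $d$, and so yields $\HC_1(A) \cong \Omega^1_{A/\C}/dA$. Thus I would first reduce the whole problem to computing the cokernel of $d \colon A \to \Omega^1_{A/\C}$.

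Next I would make $A$ explicit. Because $f = (t-a_1)(t-a_2)$ with $a_1 \neq a_2$, a polynomial $p$ lies in $A = \C \oplus I$ exactly when $p(a_1) = p(a_2)$, so $\operatorname{Spec} A$ is the affine line with the two points $a_1, a_2$ glued, a nodal curve. Setting $u = f$ and $v = tf$, one checks that these generate $A$ subject to the single relation $v^2 = u^3 - a_1 a_2 u^2 + (a_1+a_2)uv$; after translating $t$ so that $a_1 + a_2 = 0$ (an automorphism of $\C[t]$ that does not affect $\HC_1$) this becomes $g := v^2 - u^2(u+s^2)$ with $s \neq 0$. This presentation is the key input: the hypothesis $a_1 \neq a_2$ (equivalently $s \neq 0$) is exactly what makes the singularity a node rather than a cusp, and the node is what produces a nonzero $\HC_1$.

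I would then compute $\Omega^1_{A/\C} = (A\,du \oplus A\,dv)/A\,dg$ over $R := \C[u]$, using that $A = R \oplus Rv$ is free of rank two. The submodule $A\,dg$ is generated over $R$ by $dg$ and $v\,dg$, which involve the disjoint pairs $\{du,\,v\,dv\}$ and $\{v\,du,\,dv\}$ from the natural $R$-basis of $A\,du \oplus A\,dv$; hence $\Omega^1_{A/\C} = M_1 \oplus M_2$, where $M_1$ is free of rank one on $du$ (with $v\,dv$ a scalar multiple of $du$), and $M_2 = (R\,(v\,du) \oplus R\,dv)/R\,(v\,dg)$. A Smith-normal-form computation over the PID $R$, whose numerical heart is that the two coefficients of $v\,dg$ have greatest common divisor equal to $u$ to the first power (using $s \neq 0$), gives $M_2 \cong \C[u] \oplus \C$, the torsion summand being supported at the node. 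For the cokernel of $d$, writing a general element as $P(u) + Q(u)v$ gives $d(P + Qv) = P'\,du + Q'\,(v\,du) + Q\,dv$; since $P'$ is an arbitrary polynomial, the $M_1$-component of $dA$ fills out all of $M_1$, exactly as in the computation $\HC_1(\C[t]) = 0$. Thus $\HC_1(A) \cong M_2/N$ with $N = \{Q'\,(v\,du) + Q\,dv : Q \in R\}$. Using $N$ (whose $dv$-parts $Q\,dv$ span the full $dv$-line over $R$) to eliminate the $dv$-coordinate turns the relations coming from $R\,(v\,dg)$ into the two-term polynomials $p_m(u) = (2m+9)u^{m+2} + 2s^2(m+3)u^{m+1}$, $m \geq 0$, inside $\C[u]$, and I would finish by checking $\dim_\C \C[u]/\operatorname{span}_\C\{p_m : m \geq 0\} = 2$: each $p_m$ forces $u^{m+2}$ to be a nonzero scalar multiple of $u^{m+1}$ (again using $s \neq 0$), collapsing every $u^k$ with $k \geq 1$ onto $\C u$, while no nontrivial combination of the $p_m$ has degree $\leq 1$, so $1$ and $u$ remain independent.

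The main obstacle is the middle step: correctly determining the torsion of $\Omega^1_{A/\C}$ at the node and then disentangling which forms are exact. The delicate point is that $dA$ is not an $R$-submodule of $\Omega^1_{A/\C}$ — the assignment $Q \mapsto Q'(v\,du) + Q\,dv$ is only $\C$-linear — so the final quotient $M_2/N$ must be handled as a $\C$-vector space rather than as an $R$-module. I would also need to verify carefully that the decomposition $\Omega^1_{A/\C} = M_1 \oplus M_2$ is compatible with $d$, so that the entire free part $M_1$ is exact and all of $\HC_1(A)$ is concentrated in the nodal summand $M_2$.
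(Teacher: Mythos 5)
Your argument is correct, and it takes a genuinely different route from the paper. The paper works directly with the presentation $\HC_1(A)=\Lambda^2(A)/T(A)$ used in Zusmanovich's theorem: it decomposes $A=\C[f]\oplus(t-a_1)\Cfp$, kills $\Lambda^2(\C[f])$ by quoting $\HC_1(\C[t])=0$ (deduced, somewhat circuitously, from $\opH_2(\gt,\C)=0$ via Zusmanovich), and then runs a degree-by-degree induction in the variables $(i,j)=t^i(t-1)^j$, culminating in a $(D-1)\times(D-1)$ determinant computation to show the odd-degree relations have full rank. You instead invoke the standard identification $\HC_1(A)\cong\Omega^1_{A/\C}/dA$ (Connes' $SBI$-sequence with $B=d$), recognize $A$ as the coordinate ring of the nodal cubic $v^2=u^2(u+s^2)$ with $u=f$, $v=tf$, and compute the cokernel of $d$ explicitly over $R=\C[u]$; I have checked the relation $v^2=u^3+(a_1+a_2)uv-a_1a_2u^2$, the splitting $\Omega^1_{A/\C}=M_1\oplus M_2$ induced by the $R$-basis $\{du,v\,dv\}\sqcup\{v\,du,dv\}$, and the reduction of $u^m\,v\,dg$ modulo $d(u^kv)$ to $p_m(u)=(2m+9)u^{m+2}+2s^2(m+3)u^{m+1}$, all of which are correct, and the final quotient $\C[u]/\operatorname{span}\{p_m\}$ is indeed $2$-dimensional since each $p_m$ has exactly two nonzero coefficients and every element of the span has degree at least $2$. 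Your approach buys conceptual clarity (the answer is visibly the contribution of the node, matching the torsion of $\Omega^1$ at the singular point, plus one class surviving in the quotient) and avoids the determinant bookkeeping; the paper's approach buys independence from the identification $\HC_1\cong\Omega^1/dA$ and from the presentation $A\cong\C[u,v]/(g)$. Two points deserve a sentence each if you write this up: (i) the paper \emph{defines} $\HC_1(A)$ as $\Lambda^2(A)/T(A)$, so you must note that for a unital commutative $\C$-algebra this Connes-complex quotient agrees with the standard $\HC_1$ and hence with $\Omega^1_{A/\C}/dA$ (Loday, Prop.~2.1.14); and (ii) the claim that $u,v$ generate $A$ subject only to the single relation $g$ needs the observation that $t^nf\in\C[u,v]$ by induction and that the surjection $\C[u,v]/(g)\to A$ is a map of one-dimensional domains, hence injective.
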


\begin{proof}
Set $\C[f] = \bigoplus_{i \geq 0} \C \cdot f^i$ and set $\Cfp = \bigoplus_{i \geq 1} \C \cdot f^i \subset I$. Then $I = \Cfp \oplus (t-a_1) \Cfp$. By definition, $\HC_1(A) = \Lambda^2(A) / T(A)$, where $T(A)$ is the subspace of $\Lambda^2(A)$ spanned by the set
\[
\{ ab \wedge c + bc \wedge a + ca \wedge b : a,b,c \in A \}.
\]
Since $A \cong \C[f] \oplus (t-a_1) \Cfp$, we can decompose $\Lambda^2(A)$ to obtain
\begin{equation} \label{eq:HC1A}
\HC_1(A) \cong \dfrac{\Lambda^2 (\C[f]) \oplus [\C[f] \wedge (t-a_1) \Cfp] \oplus \Lambda^2 ((t-a_1) \Cfp)}{T(A)}.
\end{equation}
We will study each of the terms in the numerator of \eqref{eq:HC1A} separately.

By \cref{lem:duality}, \cref{theorem:gtpginvariants}, and the second Whitehead Lemma, one has $\h_2 (\gt,\C) \cong \h_2(\g,\C) = 0$. But $\C[f] \cong \C[t]$ as algebras, so $\h_2(\g[\C[f]],\C) = 0$ as well. Applying \eqref{eq:Zus}, it then follows that $\HC_1(\C[f]) = 0$, and hence that $\Lambda^2(\C[f]) = T(\C[f]) \subseteq T(A)$. Now to finish the proof we will show that $\Lambda^2(A)$ is spanned modulo $T(A)$ by the set
\[
\{(t-a_1)^3 (t-a_2)^2 \wedge (t-a_1) (t-a_2), \ (t-a_1)^2 (t-a_2) \wedge (t-a_1) (t-a_2)\}.
\]

Without loss of generality we may assume that $a_1 = 0$ and $a_2 = 1$. For each $i,j>0$, denote the element $t^i (t-1)^j \in A$ by $(i,j)$, and denote by $H$ the subspace of $\Lambda^2 (A)$ spanned by $(3,2)\wedge(1,1)$ and $(2,1)\wedge(1,1)$. Given $(i,j)\wedge(k,\ell) \in \Lambda^2 (A)$, define its degree as $i+j+k+\ell$. We will argue by induction on the degree to show for all $i,j,k,\ell > 0$ that $(i,j)\wedge(k,\ell) \in T(A) + H$.

Note that $(i,i)$ and $(j,j)$ are elements of $\C[f]$. Then $(i,i) \wedge (j,j) \in T(A)$ by the observation that $\Lambda^2(\C[f]) \subseteq T(A)$. Now observe that 
\begin{equation} \label{eq:lower.rel}
(i+1,j) = (i,j+1) + (i,j) \quad \text{for all $i,j>0$}.
\end{equation}
Then it is enough to prove that $T(A)+H$ contains every element of the form $(i+1,i)\wedge(j+1,j)$ with $1 \leq j < i$, and every element of the form $(i+1,i)\wedge(j,j)$ with $1 \leq i, j$. In the sequel we will use the relation \eqref{eq:lower.rel} to consider only elements of either one of these two forms.

We start with degree 5. In this case, the only possible element is $(2,1)\wedge(1,1)$, which is an element of $H$ but not of $T(A)$, because any non-zero element of the form $ab\wedge c + bc \wedge a + ca\wedge b$ must have degree at least 7. In degree 6, the only possible element is $(2,2)\wedge(1,1)$, which is an element of $T(A)$ as noted above.

The only possible elements of degree 7 are $(3,2)\wedge(1,1)$ and $(2,1) \wedge (2,2)$. Since the only non-zero element of degree 7 of the form $ab\wedge c + bc \wedge a + ca\wedge b$ is
\[
(2,1)(1,1)\wedge(1,1) + (1,1)(1,1)\wedge(2,1) + (1,1)(2,1) \wedge (1,1) 
= 2 (3,2)\wedge (1,1) - (2,1)\wedge(2,2),
\]
neither $(3,2)\wedge (1,1)$ nor $(2,1)\wedge(2,2)$ is an element of $T(A)$. Instead $(3,2)\wedge(1,1) \in H$ and $(2,1)\wedge(2,2) \equiv 2 (3,2)\wedge(1,1)$ modulo $T(A)$.

Now we start the induction process. If $d=2D$ is even, then all possible elements are
\[
(D-1,D-2)\wedge(2,1), \ 
(D-2,D-3)\wedge(3,2), \ 
\ldots, \ 
(\lceil D/2 \rceil+1, \lceil D/2\rceil)\wedge(\lfloor D/2 \rfloor, \lfloor D/2 \rfloor-1).
\]
An element of the form $(i+1,i)\wedge(j+1,j)$ with $i>j\geq 1$ can be obtained from $ab\wedge c + bc \wedge a + ca\wedge b$ by taking $a=b=(j+1,j)$, $c=(i-j,i-j)$, and using \eqref{eq:lower.rel} and the induction hypothesis.

If $d = 2D+1$ is odd, then all the possible elements are 
\begin{equation} \label{possible.odd}
(D, D-1)\wedge(1,1), \
(D-1,D-2)\wedge(2,2), \
\ldots, \
(2,1)\wedge(D-1,D-1).
\end{equation}
To obtain these elements we can use linear combinations of the following $D-1$ elements in $T(A)$:
\begin{center}
\renewcommand{\arraystretch}{1.5}
\begin{tabular}{ >{\small $}c<{$} | >{\small $}c<{$} | >{\small $}c<{$} | >{\footnotesize $}c<{$}}
 a & b & c & (ab\wedge c + bc \wedge a + ca\wedge b) \\ \hline
(D-1, D-2) & (1,1) & (1,1) & 2(D,D-1)\wedge(1,1) - (D-1, D-2) \wedge(2,2) \\
(D-2, D-3) & (2,2) & (1,1) & (D,D-1)\wedge(1,1) + (D-1,D-2)\wedge (2,2) - (D-2, D-3) \wedge(3,3) \\
\vdots & \vdots & \vdots & \vdots \\
(2, 1) & (D-2,D-2) & (1,1) & (D,D-1)\wedge(1,1) + (3,2)\wedge (D-2, D-2) - (2, 1) \wedge (D-1,D-1) \\
(D-2, D-3) & (2,1) & (2,1) & 2(D,D-2)\wedge(2,1) + (4, 2) \wedge (D-2,D-3)
\end{tabular}
\end{center}
Observe that, by the induction hypothesis,
\[
2(D,D-2)\wedge(2,1) + (4, 2) \wedge (D-2,D-3) 
\equiv -2(2,1)\wedge(D-1,D-1)-(D-2,D-3)\wedge(3,3)
\]
modulo $T(A)+H$. We can organize these terms into the $(D-1) \times (D-1)$ matrix
\[
M=
\begin{pmatrix}
2 & -1 & 0\\
1 & 1 & -1  & & \cdots \\
1 & 0 & 1 & -1 \\
& \vdots & & \ddots \\
1 & 0 & 0 & \cdots & 1 & -1 \\
0 & 0 & -1 & & 0 & -2 
\end{pmatrix},
\]
where the $(i,j)$-entry corresponds to the coefficient of the term $(D-j+1, D-j)\wedge(j,j)$ on the $i$-th row of the above table. One can prove by induction on $D$ that $\det M = -d$, thus showing that every possible element listed in \eqref{possible.odd} is contained in $T(A)+H$.
\end{proof}

\section{Second Cohomology: Properties and Calculations} \label{section:secondcohoprops}

\subsection{Low-degree cohomology of \texorpdfstring{$\gtps$}{g[t]+s}} \label{ss:ldcahogtps}

Let $s>1$ and set $\gtps = \g \otimes (t \C[t]/ t^s \C[t])$. We start this section by describing $\h^i (\gtps,\C)$ for $i\leq 2$. We already know that $\h^0 (\gtps,\C) \cong \C$, and by \eqref{eq:firstcohomology} there are $\g$-module isomorphisms
\[
\h^1 (\gtps,\C) 
\cong \left( \frac{\gtps}{\left[ \gtps,\gtps \right]} \right)^* 
\cong  \left( \g \otimes \frac{t \C[t] / t^s \C[t]}{t^2 \C[t] / t^s \C[t]} \right)^*
\cong (\g \otimes \C .t)^*
\cong \g^*.
\]

\begin{proposition} \label{prop:4}
There exists an integer $s_2 > 0$ such that, for all $s > s_2$, there exists a $\g$-module isomorphism $\h^2 (\gtps,\C) \cong \h^2(\gtp,\C) \oplus \g^*$.
\end{proposition}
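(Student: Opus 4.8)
The plan is to compare $\h^2(\gtps,\C)$ with $\h^2(\gtp,\C)$ via the inflation map attached to the quotient $\gtp \twoheadrightarrow \gtps$, using the grading by $t$-degree that both spaces carry. Since the bracket is additive in $t$-degree, the Chevalley--Eilenberg cochain complexes of $\gtp$ and $\gtps$ split as direct sums over $t$-degree; the quotient map is degree-preserving with kernel the ideal $K := \g \otimes t^s\C[t] \unlhd \gtp$, and the induced inflation $\h^\bullet(\gtps,\C) \to \h^\bullet(\gtp,\C)$ respects this grading. Writing $\h^2(\gtps,\C)_d$ for the $t$-degree-$d$ component, the first step is to observe that in each degree $d \le s-1$ the degree-$d$ cochain groups of $\gtp$ and $\gtps$ coincide: every wedge factor has $t$-degree $\ge 1$, so a cochain of total degree $d \le s-1$ involves only factors $\g \otimes t^i$ with $i \le d \le s-1$, none of which are truncated. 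Hence inflation restricts to an isomorphism $\h^2(\gtps,\C)_d \xrightarrow{\ \sim\ } \h^2(\gtp,\C)_d$ for every $d \le s-1$.

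Next I would invoke \cref{theorem:Lepowsky} (via \cref{example:H2.gtp}) to record that $\h^2(\gtp,\C)$ is concentrated in $t$-degrees $\le 3$. Taking $s_2 = 3$, so that $s > s_2$ forces $s-1 \ge 3$, the previous step shows inflation is an isomorphism in every degree where $\h^2(\gtp,\C)$ is supported, while in each degree $d \ge s$ the target vanishes. Thus inflation is a degree-preserving split surjection, and as graded $\g$-modules
\[
\h^2(\gtps,\C) \cong \h^2(\gtp,\C) \oplus \bigoplus_{d \ge s} \h^2(\gtps,\C)_d ,
\]
where the second summand is precisely the kernel of inflation. It therefore remains to identify this kernel with $\g^*$.

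To do so I would bring in the Lyndon--Hochschild--Serre spectral sequence $E_2^{p,q} = \h^p(\gtps,\h^q(K,\C)) \Rightarrow \h^{p+q}(\gtp,\C)$ for the ideal $K$, and in particular its five-term exact sequence
\[
0 \to \h^1(\gtps,\C) \to \h^1(\gtp,\C) \to \h^1(K,\C)^{\gtps} \xrightarrow{\ d_2\ } \h^2(\gtps,\C) \to \h^2(\gtp,\C).
\]
Exactness at $\h^2(\gtps,\C)$ identifies the kernel of inflation with $\im(d_2)$. Now $\h^1(\gtps,\C) \cong \g^*$ (by the computation immediately preceding this proposition) and $\h^1(\gtp,\C) \cong \g^*$ (by \cref{example:H1.gtp}), both concentrated in $t$-degree $1$, and the first arrow is the degree-$1$ inflation isomorphism of the previous paragraph. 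Hence the map $\h^1(\gtp,\C) \to \h^1(K,\C)^{\gtps}$ is zero, so $d_2$ is injective and $\ker(\text{inflation}) \cong \h^1(K,\C)^{\gtps}$. The problem is thereby reduced to showing $\h^1(K,\C)^{\gtps} \cong \g^*$.

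For this final reduction I would compute directly. Since $[K,K] = \g \otimes t^{2s}\C[t]$, the isomorphism $\h^1(K,\C) \cong (K/[K,K])^*$ gives $K/[K,K] \cong \bigoplus_{i=s}^{2s-1} \g \otimes t^i$ as a $\gtps$-module. Passing to $\gtps$-coinvariants, the element $\g \otimes t^j$ with $1 \le j \le s-1$ maps $\g \otimes t^s$ onto $\g \otimes t^{s+j}$ (because $[\g,\g] = \g$), so every graded piece of degree $s+1,\ldots,2s-1$ is killed and the coinvariants collapse to $\g \otimes t^s \cong \g$. Dualizing yields $\h^1(K,\C)^{\gtps} \cong \g^*$, concentrated in $t$-degree $s$, consistently with the kernel lying in degrees $\ge s$; this finishes the proof. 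I expect the main obstacle to be the bookkeeping of the $\gtps$-module structure on $\h^1(K,\C)$ together with the injectivity of $d_2$: once the $t$-grading is used to locate where each contribution sits, both of these become short computations, but getting the module structure and the vanishing of the connecting map exactly right is the delicate point.
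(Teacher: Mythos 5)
Your proof is correct. Its first half --- the five-term exact sequence of the Lyndon--Hochschild--Serre spectral sequence for the ideal $\g \otimes t^s\C[t] \unlhd \gtp$, identifying the kernel of the inflation map $\h^2(\gtps,\C) \to \h^2(\gtp,\C)$ with $\h^1(\g \otimes t^s\C[t],\C)^{\gtps} \cong \g^*$ concentrated in $t$-degree $s$ --- is exactly the paper's argument; the paper computes the same $E_2^{0,1} \cong (\g \otimes \C.t^s)^*$ in \eqref{eq:degreescoadjoint}, and your coinvariants computation $(K/[K,K])_{\gtps} \cong \g \otimes \C t^s$ is the dual rendering of it. Where you genuinely diverge is in proving surjectivity of the inflation map, which is the technical heart of the paper's proof. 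The paper dualizes to homology via \cref{lem:duality} and proves in \cref{lem:injhp} that $\pi_{s*}: \h_p(\gtp,\C) \to \h_p(\gtps,\C)$ is injective for $s$ large, by a filtration argument: $\ker(\Lambda^p \pi_s) \subseteq F_s\Lambda^p$, and in $t$-degrees $\geq s_p$ the cycles and boundaries of $\Lambda^p(\gtp)$ coincide because $\h_p(\gtp,\C)$ is finite-dimensional. You instead observe that $\Lambda^\bullet \pi_s$ identifies the $t$-degree-$d$ components of the two Koszul complexes for every $d \leq s-1$ (each wedge factor then has degree at most $s-1$, and the differential preserves $t$-degree), so inflation is an isomorphism in each such degree; since $\h^2(\gtp,\C)$ is supported in $t$-degrees at most $3$ by \cref{example:H2.gtp}, surjectivity follows for $s \geq 4$. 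Both arguments ultimately rest on the same two facts --- $\ker(\Lambda^\bullet \pi_s)$ is concentrated in $t$-degrees $\geq s$ while $\h^2(\gtp,\C)$ is concentrated in $t$-degrees $< s$ --- but yours is more direct, stays entirely in cohomology, and pins down the explicit value $s_2 = 3$, which the paper's existential formulation of \cref{lem:injhp} does not.
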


Recall that an explicit description of $\h^2 (\gtp,\C)$ as a $\g$-module was given in \eqref{our.hn.gtp}, so \cref{prop:4} provides an explicit computation of $\h^2 (\gtp_s,\C)$ provided $s$ is sufficiently large.

To prove \cref{prop:4}, consider the following LHS spectral sequence:
\begin{equation} \label{eq:LHS.gtp/gtps}
E^{p,q}_2 \cong \h^p (\gtps, \h^q(\g \otimes t^s\C[t],\C)) \Rightarrow \h^{p+q} (\gtp,\C).
\end{equation}
By \eqref{eq:firstcohomology}, there exist $\g$-module isomorphisms $E_2^{1,0} \cong \h^1 (\gtps,\C) \cong \g^* \cong \h^1 (\gtp,\C)$, and also
\begin{equation} \label{eq:degreescoadjoint}
E_2^{0,1}
\cong \h^0 (\gtps, \h^1 (\g \otimes t^s \C[t],\C))
\cong \h^0 (\gtps, [\g \otimes (t^s \C[t] / t^{2s} \C[t])]^*)
\cong (\g \otimes \C.t^s)^*
\cong \g^*.
\end{equation}
Thus the associated 5-term exact sequence for \eqref{eq:LHS.gtp/gtps} yields an exact sequence of $\g$-modules
\[
0 \rightarrow \g^* \rightarrow \h^2 (\gtps,\C) \rightarrow \h^2 (\gtp,\C).
\]
Since $\g$ is a finite-dimensional simple Lie algebra, all short exact sequence of finite-dimensional $\g$-modules split. Then to prove \cref{prop:4}, it suffices to show for all $s$ sufficiently large that the inflation map $\opH^2(\gtps,\C) \rightarrow \opH^2(\gtp,\C)$ is surjective. By \cref{lem:duality}, this is equivalent to showing for all $s$ sufficiently large that the quotient map $\gtp \rightarrow \gtps$ induces an exact sequence
\[
0 \rightarrow \h_2 (\gtp,\C) \rightarrow \h_2 (\gtps,\C) \rightarrow \g \rightarrow 0.
\]
This sequence is guaranteed to be right exact by considering the 5-term exact sequence of low degree terms for the Hochschild--Serre spectral sequence
\begin{equation} \label{eq:HSspecseqgtp}
E_{p,q}^2 \cong \h_p (\gtps, \h_q(\g \otimes t^s\C[t],\C)) \Rightarrow \h_{p+q} (\gtp,\C).
\end{equation}
In the next lemma we show that the edge map $\pi_{s*} : \h_2 (\gtp,\C) \rightarrow \h_2 (\gtps,\C)$ of \eqref{eq:HSspecseqgtp}, which is induced by the quotient map $\pi_s: \gtp \rightarrow \gtps$, is injective if $s$ is sufficiently large; this will complete the proof of \cref{prop:4}.

\begin{lemma} \label{lem:injhp}
For each $p> 0$, there exists $s_p > 1$ such that the edge map $\pi_{s*} : \h_p (\gtp,\C) \rightarrow \h_p (\gtps,\C)$ of \eqref{eq:HSspecseqgtp} is injective for all $s \geq s_p$.
\end{lemma}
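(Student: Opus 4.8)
The plan is to exploit the $t$-grading on homology coming from the polynomial grading on $\C[t]$. First I would observe that this grading makes both $\gtp$ and $\gtps$ into $\Z$-graded Lie algebras, and since $[\g \otimes t^i, \g \otimes t^j] \subseteq \g \otimes t^{i+j}$, the Chevalley--Eilenberg differentials on the complexes $\Lambda^\bullet(\gtp)$ and $\Lambda^\bullet(\gtps)$ computing $\h_\bullet(\gtp,\C)$ and $\h_\bullet(\gtps,\C)$ both preserve the $t$-degree. Consequently $\h_p(\gtp,\C)$ and $\h_p(\gtps,\C)$ inherit $t$-gradings, and the edge map $\pi_{s*}$, being induced by the degree-preserving quotient $\pi_s$, is homogeneous of degree zero. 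Writing $\Lambda^\bullet(\gtp)_{(d)}$ for the $t$-degree-$d$ subcomplex, the map $\pi_{s*}$ thus decomposes as a direct sum of its restrictions to the individual $t$-degrees.

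Second, I would record that $\h_p(\gtp,\C)$ is finite-dimensional: by \cref{theorem:Lepowsky} the cohomology $\h^p(\gtp,\C)$ is a finite direct sum of finite-dimensional irreducible $\fre$-modules, and \cref{lem:duality} (applied with $\fri = \fa = \gtp$ and $M = \C$) gives $\h_p(\gtp,\C) \cong \h^p(\gtp,\C)^*$. In particular there exists an integer $D_p \ge 0$ such that $\h_p(\gtp,\C)$ is concentrated in $t$-degrees at most $D_p$; this finiteness is exactly what will let me choose a single threshold $s_p$ working for all large $s$.

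The key step is to compare the two complexes in each fixed low $t$-degree. Fix $d$ with $d < s$. A basis monomial of $\Lambda^p(\gtp)$ of $t$-degree $d$ is a wedge of elements $x_k \otimes t^{i_k}$ with each $i_k \ge 1$ and $\sum_k i_k = d$; hence each $i_k \le d < s$, so every factor survives the quotient $\pi_s$ unaltered, and every bracket occurring in the differential lies in $t$-degree at most $d < s$ and is therefore also unaffected by the truncation. It follows that $\pi_s$ restricts to an isomorphism of complexes $\Lambda^\bullet(\gtp)_{(d)} \cong \Lambda^\bullet(\gtps)_{(d)}$ for every $d < s$, and hence that $\pi_{s*}$ is an isomorphism on the $t$-degree-$d$ part of homology whenever $d < s$.

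Finally, setting $s_p = D_p + 1$, for every $s \ge s_p$ the entire space $\h_p(\gtp,\C)$ lies in $t$-degrees at most $D_p < s$, where $\pi_{s*}$ is an isomorphism onto its image; thus $\pi_{s*}$ is injective, proving the lemma. I expect the only delicate point to be the verification in the third step that no truncation occurs---neither among the generators nor inside the Chevalley--Eilenberg differential---once $d < s$, so that the degree-$d$ subcomplexes genuinely coincide; combined with the finiteness of the set of $t$-degrees appearing in $\h_p(\gtp,\C)$, this yields the uniform bound.
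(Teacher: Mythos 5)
Your proof is correct and rests on the same two pillars as the paper's own argument: the $t$-grading on the Koszul complexes, which forces $\ker(\Lambda^p\pi_s)$ to be concentrated in $t$-degrees $\geq s$, and the finite-dimensionality of $\h_p(\gtp,\C)$ (via Garland--Lepowsky), which bounds the $t$-degrees in which it can live. The paper packages this via the decreasing filtration $F_d\Lambda^p$ and a contradiction argument comparing cycles and boundaries, while you observe directly that $\pi_s$ restricts to an isomorphism of the $t$-degree-$d$ subcomplexes for every $d<s$ --- a slightly more direct rendering of the identical idea, with the key verification (that neither the generators nor the brackets in the differential are truncated in degree $d<s$) carried out correctly.
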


\begin{proof}
Denote the Koszul complex associated to $\gtp$ by $\left( \Lambda^\bullet (\gtp), \partial_\bullet \right)$, and for each $s>1$ denote the Koszul complex associated to $\gtps$ by $\left( \Lambda^\bullet (\gtps), \overline{\partial}_\bullet \right)$.\footnote{As in the proof of \cref{lem:duality}, we make the identification $\C \otimes_{U(\fa)} (U(\fa) \otimes \Lambda^\bullet(\fa)) \cong \Lambda^\bullet(\fa)$.} Write $\Lambda^\bullet \pi_s : \Lambda^\bullet (\gtp) \rightarrow \Lambda^\bullet (\gtps)$ for the map of complexes induced by $\pi_s$. Next consider the polynomial grading on $\gtp$ induced by the polynomial grading on $\C[t]$. One has $\gtp = \bigoplus_{d>0} \gtp_{(d)}$ where $\gtp_{(d)} := \g \otimes \C . t^d$. The polynomial grading on $\gtp$ induces corresponding polynomial gradings on $\Lambda^\bullet(\gtp)$, $\im (\partial_{\bullet})$, $\ker (\partial_\bullet)$ and $\h_\bullet (\gtp,\C)$. The polynomial grading also induces a decreasing filtration on $\gtp$,
\[
\gtp = \g \otimes t \C[t] \supset \g \otimes t^2 \C[t] \supset \cdots \supset \g \otimes t^m \C[t] \supset \cdots,
\]
as well as a decreasing filtration on its Koszul complex:
\[ \textstyle
F_p \Lambda^n := F_p \Lambda^n(\gtp) := \sum_{d\geq p} \Lambda^n(\gtp)_{(d)}.
\]

Given $p \geq 0$, $\h_p (\gtp,\C)$ is finite-dimensional by \cite[Theorem 8.6]{Garland:1976}, so there exists $s_p>1$ such that $\h_p(\gtp,\C)_{(d)} = 0$ for all $d \geq s_p$. That is, $\im (\partial_{p+1}) \cap (\Lambda^p (\gtp))_{(d)} = \ker (\partial_p) \cap (\Lambda^p (\gtp))_{(d)}$ for all $d \ge s_p$. Thus, 
\[
F_d \Lambda^p \cap \im (\partial_{p+1}) = F_d \Lambda^p \cap \ker (\partial_{p}) \quad \text{for all $d \geq s_p$.}
\]

Fix $p \ge 0$, let $s_{p}$ be as above, and let $s \ge s_{p}$. Also suppose that $\ker(\pi_{s*}) \neq \{ 0 \}$. This means that there exists $0 \neq h_p \in \h_p (\gtp,\C)$ such that $\pi_{s*} (h_p) = 0$; that is, $h_p = k_p + \im (\partial_{p+1})$ for some $k_p \in \ker (\partial_p) \setminus \im (\partial_{p+1})$ such that $\Lambda^p \pi_s (k_p) \in \im (\overline{\partial}_{p+1})$. Observe that $\im (\overline{\partial}_{p+1}) = \Lambda^p \pi_s (\im (\partial_{p+1}))$ since $\Lambda^{p+1} \pi_s$ is surjective. It thus follows that the non-vanishing of $\ker(\pi_{s*})$ is equivalent to the existence of elements $k_p \in \ker (\partial_p) \setminus \im (\partial_{p+1})$ and $\lambda \in \im (\partial_{p+1})$ such that $k_p - \lambda \in \ker(\Lambda^p \pi_s)$.

Now observe that vector space isomorphism $\gtp \cong (\g \otimes t^s\C[t]) \oplus \gtps$ induces a vector space isomorphism $\Lambda^p (\gtp) \cong \bigoplus_{a+b=p} \Lambda^a(\g \otimes t^s\C[t]) \otimes \Lambda^b(\gtps)$. Under this identification one has $(\Lambda^p \pi_s)|_{\Lambda^p (\gtps)}=\id_{\Lambda^p (\gtps)}$ and $(\Lambda^p \pi_s) (\Lambda^{a+1}(\g \otimes t^s\C[t]) \otimes \Lambda^b(\gtps)) = 0$ for all $a+b+1=p$. Then
\[
\ker (\Lambda^p \pi_s) \subseteq F_{p(s-1)+1} \Lambda^p \subseteq F_{s} \Lambda^p.
\]
Since $k_p - \lambda \in \ker(\partial_p) \cap \ker(\Lambda^p \pi_s)$, $\ker(\Lambda^p \pi_s) \subseteq F_{s} \Lambda^p$ and $F_s \Lambda^p \cap \ker(\partial_{p}) = F_s \Lambda^p \cap \im(\partial_{p+1})$, it follows that $k_p - \lambda \in F_s \Lambda^p \cap \im (\partial_{p+1})$. Since $ \lambda \in \im (\partial_{p+1})$, this contradicts the assumption that $k_p \notin \im (\partial_{p+1})$. Thus, it must be the case that $\ker(\pi_{s*}) = \set{0}$.
\end{proof}

\begin{remark}
Given $s > 0$, set $\gt_s = \g \otimes \C[t]/\subgrp{t^s}$. Fishel, Grojnowski, and Teleman \cite[Remark 1.2]{FGT08} showed that $\h^n (\gt_s,\C) \cong \h^n (\g,\C)^{\otimes s}$ as algebras. Their result can be related to \cref{prop:4} by way of \cref{thm:gtcohomologyM}. Specifically, \cref{thm:gtcohomologyM} asserts for all $n \geq 0$ that
\[
\h^n (\gt_s,\C) \cong \bigoplus_{i+j=n} \h^i (\gt_s^+,\C)^\g \otimes \h^j (\g,\C).
\]
In particular, since $\h^1 (\g,\C) = \h^2 (\g,\C) = 0$ by the first and second Whitehead Lemmas, we get for $n = 2$ that $\h^2 (\gt_s,\C) \cong \h^2 (\gt_s^+,\C)^\g$.  By Proposition \ref{prop:4}, if $s > s_2$, then $\h^2 (\gt_s^+,\C) \cong \h^2 (\gtp,\C) \oplus \g^*$, which has no trivial $\g$-composition factors by \eqref{our.hn.gtp}. Then for $s \gg 0$ one has $\opH^2(\gt_s,\C) = 0$, which agrees with the results of \cite{FGT08}.
\end{remark}

\subsection{Second cohomology of \texorpdfstring{$\gIs$}{g otimes I/Is}} \label{ss:h2.gIs}

Recall that $I = \subgrp{(t-a)(t-b)} \unlhd \C[t]$, $a \neq b$, and $s>1$. The powers of $I$ define a decreasing algebra filtration on $\C[t]$, which induces a decreasing Lie algebra filtration $\gIs$. We denote the associated graded algebra for $\gIs$ by $\gr(\gIs)$. Then the cohomology rings $\Hbul(\gIs,\C)$ and $\Hbul(\gr(\gIs),\C)$ are naturally $\gt/(\gI) \cong \g \times \g$-modules; for further discussion see Appendix \ref{appendix}.

Let $s_2$ be as in \cref{prop:4}. This section is dedicated to proving the following result:

\begin{proposition} \label{prop:h2grgoIC}
If $s > s_2$, then $\h^2(\gr(\gIs),\C) \cong \h^2(\gIs,\C)$ as $\g\times\g$-modules.
\end{proposition}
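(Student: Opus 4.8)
The plan is to show that the spectral sequence associated to the filtration on $\gIs$ that computes $\h^\bullet(\gr(\gIs),\C)$ and converges to $\h^\bullet(\gIs,\C)$ degenerates sufficiently so that no differentials can affect the $\h^2$-term. Since $\gr(\gIs)$ is the associated graded of a filtered Lie algebra, there is a standard spectral sequence of the form $E_1^{\bullet} = \h^\bullet(\gr(\gIs),\C) \Rightarrow \h^\bullet(\gIs,\C)$ (this is the machinery deferred to Appendix~\ref{appendix}); the goal is to show that in total degree $2$ this spectral sequence collapses, giving the claimed isomorphism of $\g\times\g$-modules.

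First I would set up the bigrading carefully. The filtration on $\C[t]$ by powers of $I$ induces an internal grading, and I would use this grading (the ``$I$-adic weight'') to index the spectral sequence, so that the differentials $d_r$ shift the $I$-adic degree by a fixed positive amount while preserving cohomological degree. The terms contributing to total cohomological degree $2$ are built from $\h^0$, $\h^1$, and $\h^2$ of the graded pieces. The key inputs are the low-degree computations already available: $\h^0(\gr(\gIs),\C) \cong \C$, the description of $\h^1$ via \eqref{eq:firstcohomology}, and crucially \cref{prop:4}, which for $s > s_2$ pins down $\h^2(\gtps,\C)$ (and hence the relevant graded pieces) explicitly. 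Since $\gr(\gIs)$ is, by the Chinese Remainder decomposition of the associated graded, built out of copies of $\gtps$-type algebras sitting in distinct $I$-adic weights, \cref{prop:4} controls exactly which $\g\times\g$-composition factors appear in each weight.

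The heart of the argument is a \emph{weight-separation} (or composition-factor) obstruction to the differentials. I would argue that the differentials $d_r$ entering or leaving the $\h^2$-line must vanish because the source and target live in incompatible $I$-adic weights or carry incompatible $\g\times\g$-types. Concretely, a nonzero differential out of (or into) an $\h^2$-summand would have to map it isomorphically onto a subquotient of an $\h^1$- or $\h^3$-term in a strictly different $I$-adic weight; by the explicit description coming from \cref{prop:4} and \eqref{our.hn.gtp}, the $\g\times\g$-modules appearing in those adjacent weights are of different isomorphism type (for instance, the trivial module and $\g^*$ appear only in prescribed weights), so no nonzero $\g\times\g$-equivariant map can exist. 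This forces $E_1^{2} = E_\infty^{2}$, which is precisely the desired identification once one reassembles the graded pieces.

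\textbf{The main obstacle} will be the bookkeeping for the potential incoming differentials from the $\h^1$-line and outgoing differentials to the $\h^3$-line: one must verify that \emph{every} off-diagonal contribution is killed, not merely the ``obvious'' ones, and this requires knowing enough about $\h^3$ (or at least about the $\g\times\g$-types and $I$-adic weights that can occur there) to rule out a coincidental match. I expect the cleanest route is to exploit the hypothesis $s > s_2$ so that \cref{prop:4} applies uniformly and the relevant graded cohomology is stable; the finite-dimensionality and explicit $\g\times\g$-structure then make the vanishing of each differential a matter of comparing highest weights and $I$-adic degrees. The deferral of the spectral-sequence formalism to Appendix~\ref{appendix} suggests the authors isolate the convergence and multiplicativity statements there, so in the body I would cite those and focus entirely on the degeneration in total degree~$2$.
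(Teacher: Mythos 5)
Your overall strategy coincides with the paper's: both use the filtration spectral sequence
$E_{p,q}^1 \cong \h^{-(p+q)}(\gr(\gIs),\C)_{(p)} \Rightarrow \h^{-(p+q)}(\gIs,\C)$ from Appendix~\ref{appendix}, decompose the $E^1$-terms via the K\"unneth isomorphism \eqref{eq:lem3}, and kill most differentials touching total cohomological degree $2$ by comparing $t$-degrees (the contributions in internal degrees $2$ and $3$ die for purely positional reasons, since the adjacent terms in the relevant $d^r$-sequences vanish). Up to that point your proposal is sound and essentially identical to the paper's argument.

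The genuine gap is exactly at the spot you flag as the ``main obstacle,'' and your proposed fix does not go through. For $s>s_2$, \cref{prop:4} produces an extra summand $\g^*$ of $\h^2(\gtps,\C)$ concentrated in $t$-degree $s$, so the $E^1$-term in internal degree $s$ contributing to $\h^2$ is $(\g^*\boxtimes\C)\oplus(\C\boxtimes\g^*)$, and the possible differentials out of it land in terms built from $\h^3(\gr(\gIs),\C)$ in strictly smaller internal degree --- in particular in pieces involving $\h^3(\gtps,\C)$, which is computed nowhere in the paper. A ``weight-separation'' argument comparing $\g\times\g$-types and $I$-adic degrees therefore cannot be completed: you cannot rule out that $\g^*$ occurs as a composition factor of $\h^3(\gtps,\C)$ in some intermediate $t$-degree without first computing that module. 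The paper sidesteps this entirely by a different idea: it shows that $\g^*\boxtimes\C$ and $\C\boxtimes\g^*$ genuinely occur as composition factors of the \emph{abutment} $\h^2(\gIs,\C)$, using the five-term exact sequence of the LHS spectral sequence \eqref{eq:LHSgIs} for the ideal $\g\otimes I^s\subset\gI$ (the term $\opH^0(\gIs,\opH^1(\g\otimes I^s,\C))\cong(\g^*\boxtimes\C)\oplus(\C\boxtimes\g^*)$ injects into $\h^2(\gIs,\C)$). Since $\opH^2(\gtp,\C)$ has no composition factor isomorphic to $\g^*$ (\cref{example:H2.gtp}), these factors can only come from the internal-degree-$s$ term, forcing $E^1=E^\infty$ there. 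Your proof needs this (or an equivalent) external input; as written, the degeneration in total degree $2$ is not established.
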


\begin{proof}
As described in Appendix \ref{ss:ss}, there exists a homological-type spectral sequence
\begin{equation} \label{eq:grgIsspecseq}
E_{p,q}^1 \cong \h^{-(p+q)}(\gr(\gIs),\C)_{(p)} \Rightarrow \h^{-(p+q)}(\gIs,\C).
\end{equation}
By degree considerations, $E_{p,q}^1$ can be nonzero only if $p+q \leq 0 \leq 2p+q$. The locations of potentially nonzero terms in the $E^1$-page of \eqref{eq:grgIsspecseq} are illustrated in Figure \ref{fig:E1page}. As remarked in Appendix \ref{ss:ss}, \eqref{eq:grgIsspecseq} is a spectral sequence of $\g \times \g$-modules. Since $\g \times \g$ is a finite-dimensional semisimple Lie algebra, every short exact sequence of $\g \times \g$-modules splits. Then to prove the proposition, it suffices to show that $E_{p,q}^1 \cong E_{p,q}^\infty$ whenever $p+q=-2$.

\begin{figure}[htb]
\begin{tikzpicture}[scale=0.7]
\draw[step=1.0, gray, very thin] (0,-5.5) grid (5.5,0);
\draw[very thin, <->] (0,-5.5)--(0,0.5);
\draw (-0.2,.5) node{\scriptsize{$q$}};
\draw[black, very thin, <->] (-.5,0)--(5.7,0);
\draw (6,0) node{\scriptsize{$p$}};
\foreach \p in {1,2,3,4,5} \draw[white, fill=white] (\p,0) circle(5pt);
\foreach \p in {1,2,3,4,5} \draw (\p,0) node{\scriptsize{$0$}};
\foreach \p in {0,2,3,4,5} \draw[white, fill=white] (\p,-1) circle(5pt);
\foreach \p in {0,2,3,4,5} \draw (\p,-1) node{\scriptsize{$0$}};
\foreach \p in {0,3,4,5} \draw[white, fill=white] (\p,-2) circle(5pt);
\foreach \p in {0,3,4,5} \draw (\p,-2) node{\scriptsize{$0$}};
\foreach \p in {0,1,4,5} \draw[white, fill=white] (\p,-3) circle(5pt);
\foreach \p in {0,1,4,5} \draw (\p,-3) node{\scriptsize{$0$}};
\foreach \p in {0,1,5} \draw[white, fill=white] (\p,-4) circle(5pt);
\foreach \p in {0,1,5} \draw (\p,-4) node{\scriptsize{$0$}};
\foreach \p in {0,1,2} \draw[white, fill=white] (\p,-5) circle(5pt);
\foreach \p in {0,1,2} \draw (\p,-5) node{\scriptsize{$0$}};
\draw[fill=black] (0,0) circle(1.5pt);
\draw[fill=black] (1,-1) circle(1.5pt);
\draw[fill=black] (1,-2) circle(1.5pt);
\draw[fill=black] (2,-2) circle(1.5pt);
\draw[fill=black] (2,-3) circle(1.5pt);
\draw[fill=black] (3,-3) circle(1.5pt);
\draw[fill=black] (2,-4) circle(1.5pt);
\draw[fill=black] (3,-4) circle(1.5pt);
\draw[fill=black] (4,-4) circle(1.5pt);
\draw[fill=black] (3,-5) circle(1.5pt);
\draw[fill=black] (4,-5) circle(1.5pt);
\draw[fill=black] (5,-5) circle(1.5pt);
\end{tikzpicture}
\caption{Locations of potentially nonzero terms on the $E^1$-page of \eqref{eq:grgIsspecseq}.} \label{fig:E1page}
\end{figure}
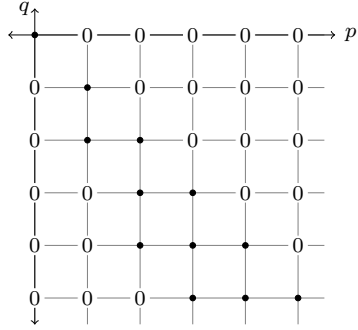

The polynomial grading on $\C[t]$ induces a grading on $\gtps$, hence also an additional non-negative grading on $\Hbul(\gtps,\C)$; denote the degree-$a$ component of $\Hbul(\gtps,\C)$ by $\Hbul(\gtps,\C)_{(a)}$. As in Section \ref{ss:hn.goI}, we refer to this additional non-negative grading as the $t$-degree. Now the discussion surrounding \eqref{eq:lem3} asserts the existence of a $\g \times \g$-module isomorphism
\begin{equation} \label{eq:ggisomorphism}
E_{p,q}^1 \cong \bigoplus_{\substack{m+n=-(p+q) \\ a+b = p}} \h^m(\gtps,\C)_{(a)} \boxtimes \h^n(\gtps,\C)_{(b)},
\end{equation}
where the symbol `$\boxtimes$' denotes the external tensor product of $\g$-modules. By the type of reasoning discussed in Appendix \ref{ss:ss}, one has
\begin{equation} \label{eq:degreevanishing}
\opH^m(\gtps,\C)_{(a)} = 0 \text{ unless $a \geq m$.}
\end{equation}

By direct observation, $\h^0(\gtps,\C)$ is isomorphic to $\C$ concentrated in $t$-degree $0$. Then \eqref{eq:ggisomorphism} and \eqref{eq:degreevanishing} imply that $E_{p,-p}^{1} = 0$ for all $p>0$. Next, we get from \eqref{eq:firstcohomology} that $\h^1 (\gtps,\C) \cong \g^*$ concentrated in $t$-degree $1$. Then $E_{1,-2}^1 \cong (\g^* \boxtimes \C) \oplus (\C \boxtimes \g^*)$, $E_{2,-4}^1 \cong \g^* \boxtimes \g^*$, and  $E_{p,-(p+1)}^{1} = 0$ for all $p>1$. Together with the condition that $E_{p,q}^r \neq 0$ only if $2p+q \geq 0$, this implies for each $r \geq 1$ that the endpoints of the sequences
\begin{align*}
&E_{(r+2),-(r+3)}^r \stackrel{d^r}{\longrightarrow} E_{2,-4}^r \stackrel{d^r}{\longrightarrow} E_{(2-r),(r-5)}^r, \quad \text{and} \\
&E_{(r+3),-(r+4)}^r \stackrel{d^r}{\longrightarrow} E_{3,-5}^r \stackrel{d^r}{\longrightarrow} E_{(3-r),(r-6)}^r
\end{align*}
are both zero, and hence that $E_{2,-4}^1 = E_{2,-4}^\infty$ and $E_{3,-5}^1 = E_{3,-5}^\infty$.

Now suppose that $s > s_2$. Then $\opH^2(\gtps,\C) \cong \opH^2(\gtp,\C) \oplus \g^*$ by \cref{prop:4}. It follows from \eqref{eq:degreescoadjoint} that the summand $\g^*$ of $\opH^2(\gtps,\C)$ is concentrated in $t$-degree $s$. By \cref{example:H2.gtp}, the summand $\opH^2(\gtp,\C)$ of $\opH^2(\gtps,\C)$ is concentrated in $t$-degree $3$ if $\ghat$ is of type $\wh{A}_1$, and is concentrated in $t$-degree $2$ otherwise. Combined with \eqref{eq:ggisomorphism} and the previous observation that $\opH^1(\gtps,\C)$ is concentrated in $t$-degree $1$, this implies that $E_{p,p-2}^1 \neq 0$ only if $p \in \set{2,3,s}$, and that $E_{s,s-2}^1 \cong (\g^* \boxtimes \C) \oplus (\C \boxtimes \g^*)$. Now to finish the proof we must show that $E_{s,s-2}^1 \cong E_{s,s-2}^\infty$.

By \cref{example:H2.gtp}, no $\g$-composition factor of $\opH^2(\gtp,\C)$ is isomorphic to $\g^*$. Then to show that $E_{s,s-2}^1 \cong E_{s,s-2}^\infty$, it suffices to show that $\g^* \boxtimes \C$ and $\C \boxtimes \g^*$ are $\g \times \g$-composition factors of $\opH^2(\gIs,\C)$. Consider the LHS spectral sequence for $\gI$ and its ideal $\gI^s$:
\begin{equation} \label{eq:LHSgIs}
E^{p,q}_2 \cong \h^p (\gIs, \h^q(\gI^s, \C)) \Rightarrow \h^{p+q} (\gI, \C).
\end{equation}
It is straightforward to check using \eqref{eq:firstcohomology} that the quotient map $\gI \rightarrow \gIs$ induces an isomorphism $\opH^1(\gIs,\C) \cong \opH^1(\gI,\C)$. Then the $5$-term exact sequence of low degree terms associated to \eqref{eq:LHSgIs} gives rise to an exact sequence
\[
0 \rightarrow \opH^0(\gIs,\opH^1(\g \otimes I^s,\C)) \rightarrow \opH^2(\gIs,\C) \rightarrow \opH^2(\gI,\C).
\]
Now a calculation like that in \eqref{eq:degreescoadjoint} shows that $\opH^0(\gIs,\opH^1(\g \otimes I^s,\C)) \cong (\g^* \boxtimes \C)\oplus (\C \boxtimes \g^*)$ as a $\g \times \g$-module (concentrated in $t$-degree $s$), and hence that $\g^* \boxtimes \C$ and $\C \boxtimes \g^*$ are $\g \times \g$-composition factors of $\opH^2(\gIs,\C)$.
\end{proof}

\subsection{Second cohomology of \texorpdfstring{$\gI$}{goI}} \label{ss:h2.goI}

In this section we give a more precise description of $\h^2(\gI,\C)$. Recall from the proof of \cref{prop:4} that the LHS spectral sequence \eqref{eq:LHSgIs} gives rise to an exact sequence of $\g \times \g$-modules
\begin{equation} \label{eq:5es.LHS.goI/goIs}
0 \rightarrow (\g^* \boxtimes \C) \oplus (\C \boxtimes \g^*) \stackrel{d}{\longrightarrow} \h^2 (\gIs, \C) \stackrel{\inf}{\longrightarrow} \h^2 (\gI, \C).
\end{equation}
(The existence of this exact sequence is not dependent on the assumption that $s > s_2$.) The last arrow in \eqref{eq:5es.LHS.goI/goIs} is the inflation map in cohomology induced by the quotient $\gI \rightarrow \gIs$. Since $\im(d) = \ker(\inf)$, one has $\im(\inf) \cong \opH^2(\gIs,\C)/\ker(\inf)$ and $\ker(\inf) \cong (\g^* \boxtimes \C) \oplus (\C \boxtimes \g^*)$.

Now assume that $s > s_2$. Given $w \in W_a^1$, write $w \cdot 0 = \lambda_w - d_w \delta$ for some $\lambda_w \in P^+$ and some $d_w \geq 0$ as in Section \ref{ss:Hn.gtp}. Then it follows from the proof of \cref{prop:h2grgoIC} (cf.\ also \cref{example:H2.gtp}) that $\g^* \boxtimes  \g^*$, $V (\lambda_w)^* \boxtimes \C$, and $\C \boxtimes V (\lambda_w)^*$ for $w \in W_a^1$ with $\ell(w) = 2$ are $\g \times \g$-composition factors of $\h^2 (\gI, \C)$, concentrated in $t$-degrees $2$, $d_w$, and $d_w$, respectively.

\begin{proposition}{\label{prop0}}
If $\lambda,\mu \in P^+$ and $\lambda \neq 0$, then setting $c = b-a$,
\[
\Hom_{\g \times \g}( V(\lambda)^* \boxtimes V(\mu)^*, \h^2(\gI,\C)) \cong \Hom_{\g}(V(\lambda)^*, \h^2(\gtp, \ev^*_c V(\mu)^*)).
\]
Similarly, if $\mu \neq 0$, then setting $c = a-b$,
\[
\Hom_{\g \times \g}( V(\lambda)^* \boxtimes V(\mu)^*, \h^2(\gI,\C)) \cong \Hom_{\g}(V(\mu)^*, \h^2(\gtp, \ev^*_c V(\lambda)^*)).
\]
\end{proposition}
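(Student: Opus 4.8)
The plan is to realize both sides of the claimed isomorphism through a single Hochschild--Serre spectral sequence together with the residual symmetry of $\gt$. Write $J = \subgrp{t-a}$ and $K = \subgrp{t-b}$, so that $I = JK = J \cap K$ (using $a \neq b$) and $\g \otimes J = \ker(\ev_a)$, $\g \otimes K = \ker(\ev_b)$ are ideals of $\gt$. Two structural observations make the right-hand side transparent. First, since $\ev_b(t-a) = b - a = c \neq 0$, the homomorphism $\ev_b$ restricts to a surjection $\g \otimes J \to \g$ with kernel $\g \otimes (J \cap K) = \gI$; hence $\gI$ is an ideal of $\g \otimes J$ and $(\g \otimes J)/\gI \cong \g$, the ``$\ev_b$'' copy, which I denote $\g_{(b)}$. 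Second, the translation automorphism of $\C[t]$ sending $g(t) \mapsto g(t+a)$ carries $J$ onto $t\C[t]$ and intertwines $\ev_b$ with $\ev_c$; it therefore induces an automorphism of $\gt$ restricting to a Lie algebra isomorphism $\g \otimes J \cong \gtp$, under which the $\gt$-module $\ev_b^* V(\mu)^*$ corresponds to $\ev_c^* V(\mu)^*$ and the residual action of $\g_{(a)} := \gt/(\g\otimes J)$ corresponds to that of $\gt/\gtp$. Consequently $\h^2(\g \otimes J, \ev_b^* V(\mu)^*) \cong \h^2(\gtp, \ev_c^* V(\mu)^*)$ as $\g_{(a)}$-modules, and the right-hand side of the proposition becomes $\Hom_{\g_{(a)}}(V(\lambda)^*, \h^2(\g \otimes J, \ev_b^* V(\mu)^*))$.

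With this reduction, I would consider the Hochschild--Serre spectral sequence for the ideal $\gI \unlhd \g \otimes J$ with coefficients in $M := \ev_b^* V(\mu)^*$. Because $\gI \subseteq \ker(\ev_b)$ acts trivially on $M$, the second page reads
\[
E_2^{p,q} \cong \h^p\bigl(\g_{(b)},\, \h^q(\gI,\C) \otimes V(\mu)^*\bigr) \Rightarrow \h^{p+q}(\g \otimes J, M).
\]
The essential structural input is that this is a spectral sequence of $\g_{(a)}$-modules: both $\gI$ and $\g \otimes J$ are ideals of $\gt$ and $M$ is a $\gt$-module, so $\gt$ acts compatibly on every page and on the abutment, and this action factors through $\gt/(\g \otimes J) = \g_{(a)}$ --- exactly the kind of equivariance already used for \eqref{eq:grgIsspecseq} in the proof of \cref{prop:h2grgoIC}. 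On the $E_2$-page the $\g_{(a)}$-action is inherited from the $\g_{(a)}$-action on $\h^q(\gI,\C)$.

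Since $\g_{(a)}$ is semisimple, the functor $\Hom_{\g_{(a)}}(V(\lambda)^*, -)$ is exact and may be applied termwise. I would then isolate total degree $2$. By the first and second Whitehead Lemmas, $\h^1(\g,\C) = \h^2(\g,\C) = 0$, so $\h^i(\g_{(b)}, N) = 0$ for $i \in \{1,2\}$ and every finite-dimensional $\g_{(b)}$-module $N$; recalling that $\h^1(\gI,\C) \cong \g^* \oplus \g^*$ by \eqref{eq:H1gotimesI} and that $\h^2(\gI,\C)$ is finite-dimensional by \cref{thm:H^2(goI).fd}, this forces $E_2^{2,0} = E_2^{1,1} = E_2^{2,1} = 0$. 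Hence the only graded piece of the abutment in total degree $2$ is $E_\infty^{0,2} = \ker(d_3 \colon E_3^{0,2} \to E_3^{3,0})$, where $E_3^{0,2} = E_2^{0,2}$. Here $E_2^{3,0} = \h^3(\g_{(b)}, V(\mu)^*)$ is a \emph{trivial} $\g_{(a)}$-module, and this is precisely where the hypothesis $\lambda \neq 0$ enters: applying $\Hom_{\g_{(a)}}(V(\lambda)^*, -)$ annihilates $E_3^{3,0}$, so the (possibly nonzero) differential $d_3$ dies, yielding
\[
\Hom_{\g_{(a)}}\bigl(V(\lambda)^*, \h^2(\g \otimes J, M)\bigr) \cong \Hom_{\g_{(a)}}\bigl(V(\lambda)^*, E_2^{0,2}\bigr), \quad E_2^{0,2} = \bigl(\h^2(\gI,\C) \otimes V(\mu)^*\bigr)^{\g_{(b)}}.
\]

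It remains to match the right-hand side of this display with $\Hom_{\g \times \g}(V(\lambda)^* \boxtimes V(\mu)^*, \h^2(\gI,\C))$, using the external tensor decomposition of $\h^2(\gI,\C)$ as a $\g_{(a)} \times \g_{(b)} \cong \gt/\gI$-module together with the standard Hom--tensor adjunctions for the semisimple pair $\g_{(a)} \times \g_{(b)}$. The ``Similarly'' assertion then follows by interchanging the roles of $a$ and $b$ (equivalently, of the two tensor factors), which replaces $c = b-a$ by $a-b$. I expect the principal difficulty to be bookkeeping rather than conceptual: first, verifying cleanly that the Hochschild--Serre spectral sequence is $\g_{(a)}$-equivariant with the expected residual action; and second, tracking the passage to contragredient modules carefully through the translation isomorphism $\g \otimes J \cong \gtp$ and through the final adjunctions, so that the dual $V(\mu)^*$ is matched on the correct factor. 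The vanishing of the remaining $E_2$-terms and the death of $d_3$ are then routine consequences of the Whitehead Lemmas and the hypothesis $\lambda \neq 0$.
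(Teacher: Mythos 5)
Your argument is correct, but it takes a genuinely different route from the paper's. The paper never works inside $\g\otimes\subgrp{t-a}$: it runs \emph{two} LHS spectral sequences for the ambient algebra $\gt$ --- one for the ideal $\gI$, one for the ideal $\g\otimes\subgrp{t-a}$ --- both with coefficients in $\ev_a^*V(\lambda)\otimes\ev_b^*V(\mu)$, uses $\lambda\neq 0$ to kill the entire bottom row $E_2^{p,0}$ and the Whitehead Lemmas to kill the columns $p=1,2$, and then equates the two resulting descriptions of $\h^2(\gt,\ev_a^*V(\lambda)\otimes\ev_b^*V(\mu))\cong E_2^{0,2}$. Tensoring the coefficients by $\ev_a^*V(\lambda)$ plays exactly the role of your termwise application of $\Hom_{\g_{(a)}}(V(\lambda)^*,-)$, but it sidesteps your one real technical obligation: proving that the Hochschild--Serre spectral sequence for $\gI\unlhd\g\otimes\subgrp{t-a}$ is equivariant for the residual $\g_{(a)}$-action. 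That equivariance does hold (the filtration on the Chevalley--Eilenberg complex is preserved because $\gI$ is an ideal of all of $\gt$, in the spirit of Appendix \ref{appendix}), and your observation that $E_2^{3,0}=\h^3(\g_{(b)},V(\mu)^*)$ is a \emph{trivial} $\g_{(a)}$-module --- so that $d_3$ dies after applying $\Hom_{\g_{(a)}}(V(\lambda)^*,-)$ --- is the correct substitute for the paper's outright vanishing of $E_2^{p,0}$; but it is the step you would have to write out in full, and the paper's route avoids it entirely. Two smaller points. First, the duality bookkeeping you flagged is a genuine mismatch: with $M=\ev_b^*V(\mu)^*$ your final adjunction yields $\Hom_{\g\times\g}(V(\lambda)^*\boxtimes V(\mu),\h^2(\gI,\C))$ (note $V(\mu)$, not $V(\mu)^*$), since $(N\otimes V(\mu)^*)^{\g_{(b)}}\cong\Hom_{\g_{(b)}}(V(\mu),N)$; however, the paper's own proof has the identical discrepancy (compare the statement with \eqref{h2.1b}, which ends in $\ev_c^*V(\mu)$ rather than $\ev_c^*V(\mu)^*$), and it is harmless for the $\fsl_2$ application where all modules are self-dual. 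Second, your appeal to \cref{thm:H^2(goI).fd} is unnecessary: finite semisimplicity of $\h^q(\gI,\C)$ from Section \ref{ss:hn.goI} already gives the Chevalley--Eilenberg/Whitehead vanishing you need, and avoiding that citation keeps this proposition independent of the finite-dimensionality theorem.
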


\begin{proof}
We establish the first isomorphism; the second isomorphism holds by a symmetric argument. Consider the LHS spectral sequence
\[ 
E_2^{p,q} \cong \h^p(\gt/\gI,\h^q(\gI,\ev_a^* V(\lambda) \otimes \ev_b^* V(\mu))) \Rightarrow \h^{p+q}(\gt,\ev_a^* V(\lambda) \otimes \ev_b^* V(\mu)).
\]
Since $\gt/\gI \cong \g \times \g$ by the Chinese Remainder Theorem, and since $\gI$ acts trivially on $\ev_a^* V(\lambda) \otimes \ev_b^* V(\mu)$, it follows from \cite[\S24]{CE48} that 
\[
E_2^{p,q} \cong \h^p (\g \times \g, \C) \otimes \h^0 (\g \times \g, \h^q (\gI, \C) \otimes (V(\lambda) \boxtimes V(\mu))).
\]
Since $\lambda \neq 0$, the $\g \times \g$-module $V(\lambda) \boxtimes V(\mu)$ is nontrivial and $E_2^{p,0} = 0$ for all $p \geq 0$. Since $\g$ is simple, the first and second Whitehead Lemmas imply that $E^{1,q}_2 = E^{2,q}_2 = 0$ for all $q \geq 0$. Thus
\begin{equation} \label{h2.1}
\begin{split}
\h^2(\gt,\ev_a^* V(\lambda) \otimes \ev_b^* V(\mu)) \cong E^{0,2}_2
&\cong \h^0 (\g \times \g, \h^2 (\gI, \C) \otimes (V(\lambda) \boxtimes V(\mu))) \\
&\cong \Hom_{\g \times \g}(V(\lambda)^* \boxtimes V(\mu)^*, \h^2 (\gI, \C)).
\end{split}
\end{equation}

Now consider the LHS spectral sequence for the Lie algebra $\gt$ and its ideal $\g \otimes \subgrp{(t-a)}$:
\begin{multline*}
E_2^{p,q} = \opH^p(\gt/\g \otimes \subgrp{(t-a)},\opH^q(\g \otimes \subgrp{(t-a)},\ev_a^* V(\lambda) \otimes \ev_b^* V(\mu))) \\
\Rightarrow \opH^{p+q}(\gt,\ev_a^* V(\lambda) \otimes \ev_b^* V(\mu)).
\end{multline*}
By similar reasoning as before, the $E_2$-page of this spectral sequence can be written in the form
\[ 
E_2^{p,q}
\cong \h^p(\g,\C) \otimes \h^0 (\g, \h^q (\g \otimes \subgrp{(t-a)}, \ev_b^* V(\mu)) \otimes V(\lambda)).
\]
Observe that $\g \otimes \subgrp{(t-a)} \cong \gtp$ via the linear change of variable $t \mapsto t+a$. Then
\[ 
E_2^{p,q}
\cong \h^p(\g,\C) \otimes \h^0 (\g, \h^q (\gtp, \ev_{b-a}^* V(\mu)) \otimes V(\lambda)).
\]
Since $\lambda \neq 0$, the $\g$-module $V(\lambda)$ is nontrivial and $E_2^{p,0} = 0$ for all $p \geq 0$. Applying the first and second Whitehead Lemmas again, one gets $E^{1,q}_2 = E^{2,q}_2 = 0$ for all $q \geq 0$. Thus, setting $c=b-a$,
\begin{equation} \label{h2.1b}
\begin{split}
\h^2 (\gt, \ev_a^* V(\lambda) \otimes \ev_b^* V(\mu)) \cong E^{0,2}_2 
&\cong \h^0 (\g, \h^2 (\gtp, \ev^*_c V(\mu)) \otimes V(\lambda)) \\
&\cong \Hom_{\g}(V(\lambda)^*, \h^2 (\gtp, \ev^*_c V(\mu))).
\end{split}
\end{equation}
Then the proposition follows by comparing \eqref{h2.1} and \eqref{h2.1b}.
\end{proof}

\begin{corollary} \label{lemma2}
Let $0 \neq \lambda \in P^+$. The $\g \times \g$-modules $V(\lambda)^* \boxtimes \C$ and $\C \boxtimes V(\lambda)^*$ are composition factors of $\h^2 (\gI, \C)$ if and only if $\lambda = \lambda_{w}$ for some $w \in W_a^1$ with $\ell(w) = 2$. If they occur as composition factors, then they do so with multiplicity $1$.
\end{corollary}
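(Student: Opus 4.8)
The plan is to deduce the statement directly from \cref{prop0} together with the explicit description of $\h^2(\gtp,\C)$ recorded in \eqref{our.hn.gtp}. The first point to establish is that composition-factor multiplicities in $\h^2(\gI,\C)$ may be computed as dimensions of $\Hom$-spaces. Indeed, $\h^2(\gI,\C)$ is finite-dimensional by \cref{thm:H^2(goI).fd}, and since $\gt/\gI \cong \g \times \g$ is a finite-dimensional semisimple Lie algebra, every finite-dimensional $\g \times \g$-module is completely reducible. Hence for any irreducible $\g \times \g$-module $N$, the multiplicity of $N$ as a composition factor of $\h^2(\gI,\C)$ equals $\dim \Hom_{\g \times \g}(N, \h^2(\gI,\C))$. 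It therefore suffices to compute this dimension for $N = V(\lambda)^* \boxtimes \C$ and for $N = \C \boxtimes V(\lambda)^*$.

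For $N = V(\lambda)^* \boxtimes \C$, since $\lambda \neq 0$ I would apply the first isomorphism of \cref{prop0} with $\mu = 0$. Using $V(0)^* = \C$ and $\ev_c^* \C = \C$, this gives $\Hom_{\g \times \g}(V(\lambda)^* \boxtimes \C, \h^2(\gI,\C)) \cong \Hom_\g(V(\lambda)^*, \h^2(\gtp,\C))$. I would then substitute the Garland--Lepowsky decomposition \eqref{our.hn.gtp}, namely $\h^2(\gtp,\C) \cong \bigoplus_{w \in W_a^1,\, \ell(w)=2} V(\lambda_w)^*$, and apply Schur's lemma summand by summand: $\Hom_\g(V(\lambda)^*, V(\lambda_w)^*) \cong \C$ precisely when $V(\lambda)^* \cong V(\lambda_w)^*$, i.e.\ when $\lambda = \lambda_w$, and is $0$ otherwise. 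The decisive structural input is the observation recorded just before \cref{theorem:gtpginvariants}: the modules $M(w \cdot 0)$ for $w \in W_a^1$ are pairwise non-isomorphic as $\g$-modules, so the highest weights $\lambda_w$ are pairwise distinct. Consequently at most one $w$ with $\ell(w)=2$ can satisfy $\lambda_w = \lambda$, which forces $\dim \Hom_\g(V(\lambda)^*, \h^2(\gtp,\C)) \in \set{0,1}$, equal to $1$ exactly when $\lambda = \lambda_w$ for some (necessarily unique) $w \in W_a^1$ with $\ell(w)=2$.

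The case $N = \C \boxtimes V(\lambda)^*$ is handled by a mirror-image argument. Since $\lambda \neq 0$, I would invoke the second isomorphism of \cref{prop0} (which exchanges the roles of the two tensor factors and sets $c = a-b$); it again reduces the computation to $\Hom_\g(V(\lambda)^*, \h^2(\gtp,\C))$, and the previous paragraph then applies verbatim to yield the same criterion and the same multiplicity.

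There is no serious obstacle here: the heavy analytic and combinatorial work has already been carried out in \cref{prop0} and in the Garland--Lepowsky computation underlying \eqref{our.hn.gtp}. The only points demanding care are the appeal to complete reducibility, which licenses the passage from $\Hom$-dimensions to composition-factor multiplicities, and the pairwise non-isomorphism of the $M(w \cdot 0)$, which is exactly what rules out multiplicity greater than $1$; one must also keep track of the duality $V(\lambda)^* \cong V(-w_0\lambda)$ when applying Schur's lemma, but this is purely bookkeeping.
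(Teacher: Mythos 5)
Your proposal is correct and follows essentially the same route as the paper: apply \cref{prop0} with one weight equal to zero, substitute the Garland--Lepowsky decomposition \eqref{our.hn.gtp}, and conclude by Schur's lemma, with the second case handled symmetrically. Your added remarks --- that complete reducibility converts multiplicities into $\Hom$-dimensions, and that the pairwise non-isomorphism of the $M(w\cdot 0)$ rules out two distinct $w$ of length $2$ contributing the same $\lambda_w$ --- are correct and only make explicit what the paper leaves implicit.
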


\begin{proof}
By \cref{prop0},
\[
\Hom_{\g \times \g} (V(\lambda)^* \boxtimes \C, \h^2 (\gI, \C)) \cong \Hom_{\g} (V(\lambda)^*, \h^2(\gtp, \C)),
\]
and by \eqref{our.hn.gtp},
\[
\Hom_{\g} (V(\lambda)^*, \h^2(\gtp, \C)) \cong \bigoplus_{\substack{w \in W_a^1 \\ \ell(w)=2}} \Hom_{\g}(V(\lambda)^* , V(\lambda_{w})^*).
\]
Since $V(\lambda)$ and $V(\lambda_{w})$ are simple $\g$-modules, we have
\[
\dim \Hom_{\g} (V(\lambda)^*, V(\lambda_{w})^*) =
\begin{cases}
1 & \text{if $\lambda = \lambda_{w}$}, \\
0 & \text{otherwise.}
\end{cases}
\]
This proves the statement for $V(\lambda^*) \boxtimes \C$, and the other case follows by a symmetric argument.
\end{proof}

The following result determines the multiplicity of the trivial $\g\times \g$-module in $\h^2 (\gI, \C)$. 

\begin{proposition} \label{prop3}
The multiplicity of $\C \boxtimes \C$ as $\g \times \g$-composition factor of $\h^2 (\gI, \C)$ is $1$.
\end{proposition}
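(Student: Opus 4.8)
The plan is to determine the multiplicity $m := [\h^2(\gI,\C) : \C\boxtimes\C]$ by trapping it between an upper bound coming from a count of diagonal $\g$-invariants and a lower bound coming from the Lyndon--Hochschild--Serre spectral sequence for $\gI \unlhd \gt$. The starting point is the identity $\dim \h^2(\gI,\C)^\g = 2$, where $\g$ is the diagonal copy. Indeed, the constant embedding $\g = \g\otimes\C \hookrightarrow \gt$ lands in the diagonal of $\gt/\gI \cong \g\times\g$, so by the chain of isomorphisms established above one has $\h^2(\gI,\C)^\g \cong \h^2(\g[\C\oplus I],\C) \cong \h_2(\g[\C\oplus I],\C)^* \cong \HC_1(\C\oplus I)^*$, whence $\dim \h^2(\gI,\C)^\g = 2$ by \cref{prop:HC1} and \cref{lem:duality}. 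Since $\g\times\g$ is semisimple, $\h^2(\gI,\C)$ is completely reducible, and a composition factor $V(\sigma)\boxtimes V(\tau)$ contributes $\dim (V(\sigma)\otimes V(\tau))^\g = \delta_{\tau,\sigma^*}$ to the diagonal invariants. Only factors of the form $V(\sigma)\boxtimes V(\sigma^*)$ contribute, each contributing $1$, so I obtain the key relation $m + S = 2$, where $S := \sum_{0\neq\sigma\in P^+}[\h^2(\gI,\C):V(\sigma)\boxtimes V(\sigma^*)]$.

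Next I would observe that $S \geq 1$, giving $m \leq 1$. As recorded in the discussion preceding the proposition (via \cref{prop:h2grgoIC} and \cref{example:H2.gtp}), $\g^*\boxtimes\g^*$ is a composition factor of $\h^2(\gI,\C)$; since the adjoint representation is self-dual, $\g^* \cong V(\theta)$ with $\theta^* = \theta$, so $\g^*\boxtimes\g^* = V(\theta)\boxtimes V(\theta^*)$ is one of the factors counted by $S$. Hence $S\geq 1$ and $m = 2-S \leq 1$.

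It remains to prove $m \geq 1$, and here I would invoke \cref{theorem:gtpginvariants} through the LHS spectral sequence $E_2^{p,q} = \h^p(\g\times\g,\h^q(\gI,\C)) \Rightarrow \h^{p+q}(\gt,\C) \cong \h^{p+q}(\g,\C)$. Since $\g\times\g$ is semisimple, $E_2^{p,q}\cong \h^p(\g\times\g,\C)\otimes \h^q(\gI,\C)^{\g\times\g}$. Using $\h^1(\g,\C)=\h^2(\g,\C)=0$ and $\h^3(\g,\C)=\C$ (so $\h^2(\g\times\g,\C)=0$ and $\h^3(\g\times\g,\C)\cong\C^2$ by Künneth), together with $\h^1(\gI,\C)^{\g\times\g}=0$ (from \eqref{eq:H1gotimesI}, as $\h^1(\gI,\C)\cong(\g^*\boxtimes\C)\oplus(\C\boxtimes\g^*)$ has no trivial summand) and $\dim \h^2(\gI,\C)^{\g\times\g}=m$, one finds $E_3^{0,2}=E_2^{0,2}$ of dimension $m$ and $E_3^{3,0}=E_2^{3,0}\cong\C^2$. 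In total degree $2$ the only possibly nonzero term contributing to $\h^2(\gt,\C)=0$ is $E_\infty^{0,2}=\ker\!\big(d_3\colon E_3^{0,2}\to E_3^{3,0}\big)$, so $d_3$ is injective and $E_\infty^{3,0}$ has dimension $2-m$. In total degree $3$ the only surviving terms are $E_\infty^{3,0}$ and $E_\infty^{0,3}$, so reading off $\dim\h^3(\gt,\C)=1$ gives $\dim E_\infty^{0,3}=1-(2-m)=m-1\geq 0$, i.e.\ $m\geq 1$. Combined with $m\leq 1$ this yields $m=1$.

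The main obstacle is precisely the lower bound $m \geq 1$: the diagonal-invariant count and the appearance of $\g^*\boxtimes\g^*$ only bound $m$ from above, so detecting that the trivial $\g\times\g$-module genuinely survives in $\h^2(\gI,\C)$ cannot come from any single vanishing statement, but rather from playing the degree-$2$ piece $\h^2(\gt,\C)=0$ against the degree-$3$ piece $\h^3(\gt,\C)=\C$ across the $d_3$ differential. The remaining verifications (complete reducibility, the Künneth computation of $\h^\bullet(\g\times\g,\C)$ in low degrees, and the triviality of the off-diagonal $E_2$-terms) are routine.
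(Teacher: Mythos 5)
Your argument is correct, and it splits naturally into two halves that compare differently with the paper's proof. The lower bound $m \geq 1$ is essentially the paper's own argument: the same LHS spectral sequence for $\gI \unlhd \gt$, the same observation that $E_2^{1,q}=E_2^{2,q}=0$ and $E_2^{3,0}\cong \C^2$, and the same mechanism of playing $\h^2(\gt,\C)=0$ against $\h^3(\gt,\C)\cong\C$ across $d_3^{0,2}$ (the paper phrases it as ``$E_\infty^{3,0}$ is a quotient of $\C^2$ and a section of $\C$, so $d_3^{0,2}\neq 0$''; your dimension bookkeeping is equivalent). The upper bound $m\leq 1$ is where you genuinely diverge. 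The paper runs a second LHS spectral sequence, for $\g\otimes\subgrp{t-a}\supset \gI$, to bound $\dim \h^2(\gI,\C)^{\C\times\g}$ from above by $1+\sum_{\ell(w)=2}\dim V(\lambda_w)^*$, and then invokes \cref{lemma2} to show the $\C\boxtimes V(\lambda_w)^*$ already exhaust all but one dimension. You instead count \emph{diagonal} $\g$-invariants: $\h^2(\gI,\C)^{\g}\cong \h^2(\g[\C\oplus I],\C)\cong \HC_1(\C\oplus I)^*$ has dimension $2$ by \cref{thm:gtcohomologyM}, \cref{lem:duality}, Zusmanovich's formula, and \cref{prop:HC1}, and each composition factor $V(\sigma)\boxtimes V(\sigma^*)$ eats exactly one of those two dimensions; since $\g^*\boxtimes\g^*$ is already known to occur (from the discussion following \cref{prop:h2grgoIC}), at most one dimension is left for $\C\boxtimes\C$. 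All the inputs you use are established before this proposition (in particular the complete reducibility you need follows from \cref{thm:H^2(goI).fd} and Weyl's theorem), so there is no circularity. Your route buys something extra: combined with $m=1$ it shows that $\g^*\boxtimes\g^*$ occurs with multiplicity exactly one and that no other factor of the form $V(\sigma)\boxtimes V(\sigma^*)$ occurs --- which is precisely the counting argument the paper postpones to Section~\ref{ss:h2.sl2} to pin down the multiplicity of $V(2)\boxtimes V(2)$ for $\fsl_2$. The paper's route, by contrast, keeps the upper bound independent of the cyclic-homology computation and ties it instead to the Garland--Lepowsky description of $\h^2(\gtp,\C)$ via \cref{lemma2}.
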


\begin{proof}
Consider the LHS spectral sequence for the Lie algebra $\gt$ and its ideal $\gI$:
\[
E_2^{p,q} \cong \h^p (\gt/\gI, \h^q (\gI, \C)) \Rightarrow \h^{p+q} (\gt, \C).
\]
Observe that, as in the proof of \cref{prop0}, $E^{p,q}_2 \cong \h^p(\g \times \g,\C) \otimes \h^0(\g \times \g, \h^q(\gI,\C))$. Also recall that $\h^n(\gt,\C) \cong \h^n(\g,\C)$ by \cref{theorem:gtpginvariants}.

Since $\g$ is a finite-dimensional simple Lie algebra, $\opH^3(\g,\C) \cong \C$. Then the K\"{u}nneth formula implies that $E^{3,0}_2 = \h^3(\g \times \g, \C) \cong \C^2$. The K\"{u}nneth formula, together with the first and second Whitehead Lemmas, also implies that $E^{1,q}_2 = E^{2,q}_2 = 0$ for all $q \geq 0$. Then $E_2^{0,2} = E_3^{0,2}$ and $E_2^{3,0}=E_3^{3,0}$. Since $E^{3,0}_\infty = \coker(d^{0,2}_3 : E^{0,2}_2 \rightarrow E^{3,0}_2)$ is a quotient of the two-dimensional space $E^{3,0}_2$, and since $E^{3,0}_\infty$ is a section of the one-dimensional space $\opH^3(\gt, \C) \cong \opH^3(\g,\C) \cong \C$, the map $d^{0,2}_3: E^{0,2}_2 \rightarrow E^{3,0}_2$ cannot be trivial. This proves that $\dim \Hom_{\g \times \g} (\C \boxtimes \C, \h^2 (\gI, \C)) \geq 1$.

Now to prove the reverse inequality, consider the following LHS spectral sequence:
\[
E_2^{p,q} \cong \h^p (\g \otimes \subgrp{t-a}/\gI, \h^q(\gI,\C))
\Rightarrow \h^{p+q} (\g \otimes \subgrp{t-a},\C) .
\]
Observe that $E^{p,q}_2 \cong \h^p (\g, \C) \otimes \h^0 (\g, \h^q (\gI, \C))$. Here, $\h^0 (\g, \h^q (\gI, \C))$ identifies with $\h^q (\gI, \C)^{\C \times \g}$ when we view $\h^q (\gI, \C)$ as a $\g \times \g$-module as before; in other words, this $\h^{0}$ ``detects'' composition factors of the form $\C \boxtimes V(\nu)$. Also observe that $\h^n (\g \otimes \langle t-a \rangle, \C) \cong \h^n (\gtp, \C)$, and recall from \eqref{our.hn.gtp} that $\h^n (\gtp, \C) \cong \bigoplus_{w\in W_a^1, \ell (w) = n} V(\lambda_{w})^*$.

Since $\g$ is a finite-dimensional, simple Lie algebra, $E_2^{3,0} \cong \h^3 (\g, \C) =\C$, and the first and second Whitehead Lemmas imply that $E^{1,q}_2 = E^{2,q}_2 = 0$ for all $q\geq 0$. Thus, similar to the analysis above, $\h^2 (\gtp, \C) = E^{2,0}_\infty = \ker(d^{0,2}_3: E^{0,2}_2 \rightarrow E^{3,0}_2)$, implying that 
\[
\dim \h^{2}(\gI, \C)^{\C \times \g} = \dim E_{2}^{0,2} = \dim(\ker d^{0,2}_3) + \dim (\im d^{0,2}_3) \leq \sum_{\substack{w \in W_a^1 \\ \ell(w) = 2}} \dim V(\lambda_{w})^* + 1.
\]
From Corollary \ref{lemma2}, we know that the $\C \boxtimes V (\lambda_{w})^*$ are composition factors of $\h^2 (\gI, \C)$, so $\dim E_{2}^{0,2} \geq \sum_{w \in W_a^1, \ell(w) = 2} \dim V(\lambda_{w})^*$. Our dimension estimate leaves room for at most one composition factor $\C \boxtimes \C$, proving that $\dim \Hom_{\g \times \g} (\C \boxtimes \C, \h^2 (\gI, \C)) \leq 1$.
\end{proof}

\begin{example}{\label{eg:h2diff}}
Recall that using the K\"unneth formula (see \eqref{eq:lem3}), 
\[
\h^2 (\gr(\gI), \C) \cong (\h^2(\gtp, \C) \boxtimes \C) \oplus (\g^* \boxtimes \g^*) \oplus (\C \boxtimes \h^2 (\gtp, \C)).
\]
Thus, by \cref{example:H2.gtp}, $\h^2 (\gr(\gI), \C)$ does not contain any trivial $\g \times \g$-composition factor. By \cref{prop3}, $(\C \boxtimes \C)$ is a composition factor of $\h^2 (\gI, \C)$ with multiplicity 1. This explicitly shows a difference between $\h^2 (\gr(\gI), \C)$ and $\h^2 (\gI, \C)$ (also see Section \ref{section:non-conv.ss}).
\end{example}


\section{Calculation of \texorpdfstring{$\opH^{2}(\fsl_{2}\otimes I, \C)$}{H2(sl2 otimes I,C)}} \label{ss:h2.sl2}

\subsection{}

Recall that $I = \subgrp{f} \unlhd \C[t]$, where $f = (t-a)(t-b) \in \C[t]$ and $a \neq b$. The goal of this section is to prove the following theorem, which explicitly describes the structure of $\h^2 (\fsl_2 \otimes I, \C)$ as an $\fsl_2 \times \fsl_2$-module.

\begin{theorem} \label{thm:H^2.sl2oI}
$\h^2 (\fsl_2 \otimes I, \C)$ is isomorphic as a $\fsl_2 \times \fsl_2$-module to
\[
[\C \boxtimes \C] \oplus [V(2) \boxtimes V(2)] \oplus [V(4) \boxtimes \C] \oplus [\C \boxtimes V(4)].
\]
\end{theorem}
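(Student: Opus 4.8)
The plan is to determine the $\fsl_2 \times \fsl_2$-composition factors of $\h^2(\fsl_2 \otimes I, \C)$ and their multiplicities by assembling the general results of Section~\ref{section:secondcohoprops}, so that everything reduces to a single unknown multiplicity. Throughout I use that for $\g = \fsl_2$ the adjoint representation satisfies $\g \cong \g^* \cong V(2)$, that $2\theta$ corresponds to $V(4)$, and that by \cref{example:H2.gtp} the algebra $\ghat$ is of type $\wh{A}_1$, so $\h^2(\gtp,\C) \cong V(4)$ and the unique $w \in W_a^1$ with $\ell(w) = 2$ has $\lambda_w = 4$. From the discussion preceding \cref{prop0}, the modules $\g^* \boxtimes \g^* \cong V(2) \boxtimes V(2)$, $V(4) \boxtimes \C$, and $\C \boxtimes V(4)$ are composition factors; from \cref{prop3} the trivial module $\C \boxtimes \C$ occurs with multiplicity exactly $1$; and from \cref{lemma2} the factors $V(4) \boxtimes \C$ and $\C \boxtimes V(4)$ occur with multiplicity exactly $1$, while no other factor of the form $V(\nu)^* \boxtimes \C$ or $\C \boxtimes V(\nu)^*$ occurs (since $\nu \neq 4$ for such a factor to be excluded).

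Next I would show that these are the only possible composition factors. The key observation is that, as a $\g \times \g$-module via $\gt/\gI \cong \g \times \g$, the ideal $\gI = \g \otimes I$ has only the two composition factors $V(2) \boxtimes \C$ and $\C \boxtimes V(2)$ (each with infinite multiplicity): localizing $I$ at $\fm_a$ and $\fm_b$ via the Chinese Remainder Theorem, each graded layer of $\g \otimes I$ is a copy of the adjoint representation at one of the two points and is trivial at the other. Since $\h^2(\gI,\C)$ is a $\g \times \g$-subquotient of $\Lambda^2(\gI)$, hence of $(\gI)^{\otimes 2}$, its composition factors lie among the products of these two, namely $V(4) \boxtimes \C$, $V(2) \boxtimes \C$, $\C \boxtimes \C$, $V(2) \boxtimes V(2)$, $\C \boxtimes V(2)$, and $\C \boxtimes V(4)$. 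Combined with the previous paragraph, which pins down every factor except $V(2) \boxtimes V(2)$ and rules out $V(2) \boxtimes \C$ and $\C \boxtimes V(2)$, the theorem reduces to proving that $V(2) \boxtimes V(2)$ occurs with multiplicity exactly $1$.

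The lower bound is immediate since $\g^* \boxtimes \g^*$ is a composition factor. For the upper bound I would invoke \cref{prop0} with $\lambda = \mu = 2$ and $c = b - a$, which gives
\[
\dim \Hom_{\g \times \g}(V(2) \boxtimes V(2), \h^2(\gI,\C)) = \dim \Hom_\g(V(2), \h^2(\gtp, \ev_c^* V(2))),
\]
reducing the problem to computing the $V(2)$-isotypic part of the $\g$-module $\h^2(\gtp, \ev_c^* V(2))$. I would compute $\h^2(\gtp, \ev_c^* V(\mu))$ directly from the Chevalley--Eilenberg complex $\Hom_\C(\Lambda^\bullet(\gtp), \ev_c^* V(\mu))$, following the hands-on approach of \cref{lem:h^1++*.fd}: fix a Chevalley basis of $\fsl_2$, decompose by $\fh$-weight, and use the cocycle relation \eqref{eq:kerd1} (now with nontrivial coefficients) to reduce every cochain to a finite list of generators modulo coboundaries.

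The main obstacle is precisely this last computation. The natural spectral-sequence shortcut is circular: the Lyndon--Hochschild--Serre sequence for $0 \to \g \otimes \langle t(t-c) \rangle \to \gtp \to \g \to 0$ collapses, by the Whitehead Lemmas, to $\h^2(\gtp, \ev_c^* V(\mu)) \cong \Hom_\g(V(\mu)^*, \h^2(\g \otimes \langle t(t-c)\rangle, \C))$, but $\g \otimes \langle t(t-c)\rangle$ is again an ideal of exactly the type studied in the theorem, so this merely re-expresses the sought multiplicity $m := \dim \Hom_{\g \times \g}(V(2) \boxtimes V(2), \h^2(\gI,\C))$ as itself and imposes no constraint. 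The difficulty is compounded by the fact that the coefficient action of $\gtp$ on $\ev_c^* V(\mu)$ does not preserve the polynomial $t$-grading, since $x \otimes t^k$ acts by the scalar $c^k$, so the grading cannot be used to truncate the complex directly. Resolving this therefore requires the explicit finite reduction of cocycles described above, after which one checks that the surviving classes contribute a single copy of $V(2)$, giving $m = 1$ and completing the identification of $\h^2(\fsl_2 \otimes I, \C)$.
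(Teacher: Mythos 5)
Your assembly of the general results from Sections~\ref{ss:hn.goI} and~\ref{section:secondcohoprops} matches the paper's opening moves: \cref{prop3} gives $\C\boxtimes\C$ with multiplicity exactly $1$, and \cref{lemma2} together with \cref{example:H2.gtp} gives $V(4)\boxtimes\C$ and $\C\boxtimes V(4)$ with multiplicity $1$ while excluding every other factor of the form $V(\nu)\boxtimes\C$ or $\C\boxtimes V(\nu)$ with $\nu\neq 0$. The two remaining steps, however, each contain a genuine gap.

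First, your exclusion of $V(1)\boxtimes V(1)$, $V(1)\boxtimes V(3)$, and $V(3)\boxtimes V(1)$ rests on treating $\h^2(\gI,\C)$ as a $\g\times\g$-subquotient of $\Lambda^2(\gI)$, but $\Lambda^2(\gI)$ is not a $\g\times\g$-module: the adjoint action of $\gt$ on $\gI$ does not factor through $\gt/\gI$ (for instance $x\otimes f$ acts nontrivially on $\gI$ yet is zero in the quotient), and lifts of $(\fh,0)$ and $(0,\fh)$ to $\gt$ do not act semisimply on $\gI$. The $\g\times\g$-structure exists only after passing to (co)homology. The ``graded layers'' you describe belong to $\gr(\gI)\cong(\gtp)^{\oplus 2}$, and the spectral sequence relating $\h^2(\gr(\gI),\C)$ to $\h^2(\gI,\C)$ fails to converge (see the Appendix and \cref{eg:h2diff}, where $\C\boxtimes\C$ occurs in $\h^2(\gI,\C)$ but not in $\h^2(\gr(\gI),\C)$), so this route cannot be taken for granted. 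Note also that \cref{fin.comp.fac} does not exclude these factors, since $\lambda+\mu\in\{2,4\}$ for all three. The paper rules them out by combining \cref{prop0} with the explicit Koszul-complex computation of \cref{h2.sl2.1.3} and \cref{cor:h2.sl2.1.3}: the candidate weight-$1$ and weight-$3$ cycles in $\Lambda^2(\gtp)\otimes\ev_a^*V(1)$ are never highest-weight vectors in homology. Some argument of this kind is needed and is missing from your proposal.

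Second, the multiplicity $m$ of $V(2)\boxtimes V(2)$ is never actually computed. You reduce it via \cref{prop0} to $\dim\Hom_\g\bigl(V(2),\h^2(\gtp,\ev_c^*V(2))\bigr)$ and then defer to an ``explicit finite reduction of cocycles,'' while yourself identifying why that computation is hard (the coefficient action does not preserve the $t$-grading, and the LHS shortcut is circular). Asserting that the surviving classes contribute a single copy of $V(2)$ is precisely the content that requires proof. The paper closes this step by an entirely different and much shorter argument: restricting the partial decomposition to the diagonal $\fsl_2$ gives $\C^{m+1}\oplus V(2)^m\oplus V(4)^{m+2}$, and Zusmanovich's theorem together with \cref{prop:HC1} gives $\dim\Hom_{\fsl_2}(\C,\h^2(\fsl_2\otimes I,\C))=\dim\HC_1(\C\oplus I)=2$, whence $m=1$. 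This diagonal-invariants/cyclic-homology input is the decisive ingredient and is absent from your argument.
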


From \cref{fin.comp.fac}, it follows that the only possible composition factors of $\h^2 (\fsl_2 \otimes I, \C)$ are of the form $V(\lambda) \boxtimes V(\mu)$ with $\lambda + \mu \in \{0,2,4\}$. From \cref{prop3}, it follows that $\h^2 (\fsl_2 \otimes I, \C)$ has $\C \boxtimes \C$ as a composition factor with multiplicity 1. From \cref{lemma2}, it follows that $\C \boxtimes V(2)$ and $V(2) \boxtimes \C$ are not composition factors of $\h^2 (\fsl_2 \otimes I, \C)$. From \cref{lemma2} and \cref{example:H2.gtp}, it follows that $\C \boxtimes V(4)$ and $V(4) \boxtimes \C$ are composition factors of $\h^2 (\fsl_2 \otimes I, \C)$ with multiplicity 1.

The remaining cases include determining the multiplicities of $V(1)\boxtimes V(1)$, $V(2)\boxtimes V(2)$, $V(1)\boxtimes V(3)$, 
and $V(3)\boxtimes V(1)$. 

\subsection{}

Throughout the rest of this section, set $\g = \fsl_2$ and fix a Chevalley basis $\{y,h,x \}$ for $\g$ such that $y \in \g_{-2}$, $x \in \g_2$, and $h = [x,y] \in \fh$.

\begin{proposition} \label{h2.sl2.1.3} Let $a \neq 0$. Then
\begin{itemize}
\item[(a)] $\Hom_{\fsl_2} (V(3), \h_2 (\fsl_2 [t]^+, \ev_a^*V(1))) = 0$.
\item[(b)] $\Hom_{\fsl_2} (V(1), \h_2 (\fsl_2 [t]^+, \ev_a^*V(1))) = 0$.
\end{itemize} 
\end{proposition}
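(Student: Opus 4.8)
\noindent
The plan is to read off the $\fsl_2$-composition factors of $\h_2(\gtp,\ev_a^*V(1))$ from a spectral sequence built from the polynomial grading, feeding in the Garland--Lepowsky description of $\h_\bullet(\gtp,\C)$. Write $M=\ev_a^*V(1)$, and recall that the relevant $\fsl_2$-action (the one transported from \cref{prop0} by the change of variable $t\mapsto t+a$) is that of $\fsl_2=\fsl_2\otimes1\subset\gt$, acting by the adjoint action on $\gtp$ and standardly on $M$, and so preserving the $t$-degree. I would filter the Chevalley--Eilenberg chain complex $\Lambda^\bullet(\gtp)\otimes M$ increasingly by $t$-degree, placing $M$ in $t$-degree $0$. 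The differential splits as $\partial=\partial^{\mathrm{br}}+\partial^{\mathrm{act}}$, where the bracket part $\partial^{\mathrm{br}}=\partial^{\mathrm{Koszul}}_{\gtp}\otimes\id_M$ preserves $t$-degree while the coefficient part $\partial^{\mathrm{act}}$ strictly lowers it (an element $z\otimes t^k$ acts on $M$ by $a^k$ times the $\fsl_2$-action). This yields an $\fsl_2$-equivariant spectral sequence
\[
E^1_{(n),(d)}\cong\h_n(\gtp,\C)_{(d)}\otimes V(1)\ \Longrightarrow\ \h_n(\gtp,M),\qquad d^r\colon E^r_{(n),(d)}\to E^r_{(n-1),(d-r)},
\]
which converges because only finitely many differentials are nonzero in each bidegree. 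By \cref{example:H1.gtp}, \cref{example:H2.gtp}, and \cref{lem:duality}, the homologies $\h_0=\C$, $\h_1=V(2)$, $\h_2=V(4)$ are concentrated in $t$-degrees $0,1,3$, and the length-$3$ element of $W_a^1$ gives $\h_3(\gtp,\C)=V(6)$ concentrated in $t$-degree $6$.

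\noindent
These inputs show that the only term of homological degree $2$ is $E^1_{(2),(3)}=V(4)\otimes V(1)\cong V(5)\oplus V(3)$. Part (b) is then immediate: $\h_2(\gtp,M)$ is a subquotient of $V(5)\oplus V(3)$, which has no $V(1)$-composition factor, so $\Hom_{\fsl_2}(V(1),\h_2(\gtp,M))=0$. For part (a) the summand $V(3)$ is present on the $E^1$-page and I must show that it dies. The relevant neighbours are $E^1_{(1),(1)}=V(2)\otimes V(1)=V(3)\oplus V(1)$ and $E^1_{(0),(0)}=V(1)$; because $\h_3(\gtp,\C)$ sits in $t$-degree $6>5$, no differential enters $E_{(2),(3)}$ before the $E^3$-page, so $E^2_{(2),(3)}=V(5)\oplus V(3)$. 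The only map that can delete $V(3)$ from homological degree $2$ is $d^2\colon E^2_{(2),(3)}\to E^2_{(1),(1)}$, and by Schur's lemma it suffices to show this $d^2$ is surjective onto the $V(3)$-isotypic part; then $d^2$ restricts to an isomorphism on the $V(3)$-summand, $V(3)\not\subseteq\ker d^2$, and $V(3)$ is absent from $E^3_{(2),(3)}$ and hence from $\h_2(\gtp,M)$.

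\noindent
The crux is the identity $\h_1(\gtp,M)=\coker\bigl(d^2\colon E^2_{(2),(3)}\to E^2_{(1),(1)}\bigr)$, valid because $\h_2(\gtp,\C)$ is concentrated in a single $t$-degree, so $d^2$ is the only differential touching $E_{(1),(1)}$. Thus the surjectivity of $d^2$ is equivalent to the vanishing $\h_1(\gtp,M)=0$, and it is the latter that I would establish directly. Since $a\neq0$, the evaluation $\ev_a\colon\gtp\to\fsl_2$ is surjective with kernel $\mathfrak{n}=\fsl_2\otimes t(t-a)\C[t]$, and $M$ is inflated from the quotient $\fsl_2$. The Lyndon--Hochschild--Serre spectral sequence for $\mathfrak{n}\trianglelefteq\gtp$ then reads $E^2_{p,q}=\h_p(\fsl_2,\h_q(\mathfrak{n},\C)\otimes V(1))\Rightarrow\h_{p+q}(\gtp,M)$, and since $\h_\bullet(\fsl_2,\C)$ is concentrated in degrees $0$ and $3$, the only contribution in total degree $1$ is $E^2_{0,1}\cong(\h_1(\mathfrak{n},\C)\otimes V(1))^{\fsl_2}$. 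Finally, $\h_1(\mathfrak{n},\C)=\mathfrak{n}/[\mathfrak{n},\mathfrak{n}]$ is a subquotient of $\mathfrak{n}=\fsl_2\otimes t(t-a)\C[t]$ and so carries only even $\fh$-weights, whereas $V(1)$ carries only odd weights; hence $\h_1(\mathfrak{n},\C)\otimes V(1)$ admits no nonzero $\fsl_2$-invariant, giving $\h_1(\gtp,M)=0$ and completing part (a).

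\noindent
I expect the principal difficulty to be organisational rather than conceptual. One must keep clearly separate the two $\fsl_2$-actions in play: the degree-preserving action of $\fsl_2\otimes1$, which governs the first spectral sequence and is the module structure we are decomposing, versus the quotient action $\gtp/\mathfrak{n}\cong\fsl_2$ in the auxiliary Lyndon--Hochschild--Serre sequence, which is used only to compute the dimension $\dim\h_1(\gtp,M)=0$ and where the action is irrelevant. The other point requiring care is the precise $t$-degrees of $\h_2(\gtp,\C)$ and $\h_3(\gtp,\C)$ coming from Garland--Lepowsky, since it is exactly the concentration of $\h_3$ in $t$-degree $6>5$ that rules out spurious incoming differentials at $E_{(2),(3)}$; the parity-of-weights vanishing of $(\h_1(\mathfrak{n},\C)\otimes V(1))^{\fsl_2}$ is the one observation that makes the final step painless.
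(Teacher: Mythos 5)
Your route is genuinely different from the paper's. The paper proves both parts by a bare-hands computation: it writes down the weight-$3$ and weight-$1$ subspaces of $\Lambda^2(\gtp)\otimes\ev_a^*V(1)$, solves explicitly for the cycles (the elements $k_{i,j}$), and observes that $x\cdot k_{i,j}\neq 0$, so no cycle represents a highest-weight vector of weight $3$ or $1$. You instead run the $t$-degree filtration spectral sequence with $E^1$-page $\h_\bullet(\gtp,\C)\otimes V(1)$ (the filtration is exhaustive and bounded below in each homological degree, so convergence is standard), which makes part (b) and the multiplicity bound for part (a) immediate from the Garland--Lepowsky data --- the $t$-degrees $0,1,3,6$ of $\h_0,\dots,\h_3$ are exactly what is needed to isolate the differential $d^2\colon E^2_{(2),(3)}\to E^2_{(1),(1)}$ --- and then reduce the survival of the $V(3)$ to the single vanishing $\h_1(\gtp,\ev_a^*V(1))=0$, obtained from the Lyndon--Hochschild--Serre sequence for $\mathfrak{n}=\fsl_2\otimes t(t-a)\C[t]\unlhd\gtp$. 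The page-by-page bookkeeping and the Schur's lemma step are correct, and the method in fact yields more than the paper extracts (pushed one step further it gives $\h_\bullet(\gtp,\ev_a^*V(1))=0$ outright).

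One justification does not stand as written: the claim that $\h_1(\mathfrak{n},\C)$ ``carries only even $\fh$-weights because it is a subquotient of $\mathfrak{n}$.'' The weights that matter are those for the Cartan of $\gtp/\mathfrak{n}\cong\fsl_2$, and $\mathfrak{n}$ itself is not a $\gtp/\mathfrak{n}$-module; a lift of that Cartan to $\gtp$, say $\fh\otimes t/a$, acts on $\mathfrak{n}$ by $\mathrm{ad}(h)\otimes m_{t/a}$, which is not diagonalizable (multiplication by $t/a$ on $t(t-a)\C[t]$ has no eigenvectors), so ``the weights of $\mathfrak{n}$'' are not defined for the relevant torus. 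The caution is not idle: a factor such as $V(1)\boxtimes V(1)$ of a $\gt/\mathfrak{n}\cong\fsl_2\times\fsl_2$-module has only even weights for the diagonal copy $\fsl_2\otimes 1$ --- the copy for which ``subquotient of $\mathfrak{n}$'' does control parity --- yet odd weights for the factor $\gtp/\mathfrak{n}$, and such a factor would break the argument. The fact you need is nevertheless true and is essentially \eqref{eq:H1gotimesI} applied to $I=\subgrp{t(t-a)}$: writing $J=t(t-a)\C[t]$, one has $\h_1(\mathfrak{n},\C)=\fsl_2\otimes(J/J^2)$, and $J/J^2$ is a module over the semisimple ring $\C[t]/\subgrp{t(t-a)}\cong\C\times\C$, so $\h_1(\mathfrak{n},\C)\cong(V(2)\boxtimes\C)\oplus(\C\boxtimes V(2))$ as a $\gt/\mathfrak{n}$-module; its restriction to $\gtp/\mathfrak{n}$ is $V(2)\oplus\C^{\oplus 3}$, which has only even weights, and your parity argument then closes. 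With this repair the proof is complete.
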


\begin{proof} Fix a basis $\{ v_{-1}, v_1 \} \subset \ev_a^*V(1)$ such that
\[
\begin{array}{c c c}
x \cdot v_1 = 0  & & x \cdot v_{-1} = v_1 \\
h \cdot v_1 = v_1  & & h \cdot v_{-1} = -v_{-1} \\
y \cdot v_1 = v_{-1}  & & y \cdot v_{-1} = 0.
\end{array}
\]
There exists a $\g$-module isomorphism
\begin{equation} \label{iso.ext2.sl2}
\Lambda^2(\gtp) \otimes \ev_a^*V(1) 
\cong  \bigoplus_{i \geq 1} \Lambda^2 (\g \otimes \C t^i) \otimes V(1)
\ \oplus \ \bigoplus_{1 \leq i < j} (\g \otimes \C t^i) \otimes (\g \otimes \C t^j) \otimes V(1).
\end{equation}
For part (a), using this isomorphism, the weight-3 subspace of $\Lambda^2(\gtp) \otimes \ev_a^*V(1)$ is spanned by 
\[
\{ (x \otimes t^i) \wedge (x \otimes t^j) \otimes v_{-1}, 
\ (x \otimes t^\ell) \wedge (h \otimes t^m) \otimes v_1 \, : 
\, 1 \leq i < j; \ 1 \leq \ell, m \}.
\]
Thus, non-zero elements of weight 3 in the kernel of 
\[
\begin{array}{rcl}
\partial_2 : \Lambda^2(\gtp) \otimes \ev_a^*V(1) & \longrightarrow & \gtp \otimes \ev_a^*V(1) \\
x \wedge y \otimes v & \longmapsto & -[x,y] \otimes v + y \otimes xv - x \otimes yv
\end{array}
\]
are scalar multiples of
\[
k_{i,j} = (x \otimes t^i) \wedge (x \otimes t^j) \otimes v_{-1} - ( (x \otimes t^i) \wedge (h \otimes t^j) \otimes v_1 + (h \otimes t^i) \wedge (x \otimes t^j) \otimes v_1), \quad 1 \leq i < j.
\]
Since $x \cdot k_{i,j} \neq 0$, it follows that $k_{i,j}$ does not represent a highest-weight vector of weight 3 in $\h_2 (\gtp, \ev_a^*V(1))$ for any $1 \leq i < j$. Thus,  $\Hom_\g (V(3), \h_2 (\gtp, \ev_a^*V(1))) = 0$.

For part (b), using the isomorphism \eqref{iso.ext2.sl2}, the weight-1 subspace of $\Lambda^2(\gtp) \otimes \ev_a^*V(1)$ is spanned by
\[
\{ (x \otimes t^i) \wedge (h \otimes t^j) \otimes v_{-1}, \,
(x \otimes t^i) \wedge (y \otimes t^j) \otimes v_1, \,
(h \otimes t^\ell) \wedge (h \otimes t^m) \otimes v_1 \, 
: \, 1 \leq i, j; \, 1 \leq \ell < m\}.
\]
Therefore, non-zero elements of weight 1 in the kernel of
\[
\partial_2 : \Lambda^2(\gtp) \otimes \ev_a^*V(1) \rightarrow \gtp \otimes \ev_a^*V(1)
\]
are scalar multiples of
\begin{align*}
k_{i,j}
{ }&{ } = (x \otimes t^i) \wedge (h \otimes t^j) \otimes v_{-1} 
+ (h \otimes t^i) \wedge (x \otimes t^j) \otimes v_{-1} \\
{ }&{ }+ (x \otimes t^i) \wedge (y \otimes t^j) \otimes v_1
+ (y \otimes t^i) \wedge (x \otimes t^j) \otimes v_1
- (h \otimes t^i) \wedge (h \otimes t^j) \otimes v_1, \ 1 \leq i < j.
\end{align*}
Since $x \cdot k_{i,j} \neq 0$, it follows that $k_{i,j}$ does not represent a highest-weight vector in $\h_2 (\gtp, \ev_a^*V(1))$ for any $1 \leq i <j$. Thus, $\Hom_\g (V(1), \h_2 (\gtp, \ev_a^*V(1))) = 0$.
\end{proof}

The following corollary settles the question about the multiplicities of the composition factors $V(1)\boxtimes V(1)$, $V(1)\boxtimes V(3)$, 
and $V(3)\boxtimes V(1)$. 
\vskip 1cm 

\begin{corollary} \label{cor:h2.sl2.1.3} {\ }
\begin{itemize}
\item[(a)] $\Hom_{\fsl_2 \times \fsl_2} (V(3) \boxtimes V(1), \h^2 (\fsl_2 \otimes I, \C)) = 0$.
\item[(b)] $\Hom_{\fsl_2 \times \fsl_2} (V(1) \boxtimes V(3), \h^2 (\fsl_2 \otimes I, \C)) = 0$.
\item[(c)] $\Hom_{\fsl_2 \times \fsl_2} (V(1) \boxtimes V(1), \h^2 (\fsl_2 \otimes I, \C)) = 0$.
\end{itemize} 
\end{corollary}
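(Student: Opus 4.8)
The plan is to reduce each of the three vanishing statements to the already-established vanishing in \cref{h2.sl2.1.3}, using \cref{prop0} to pass from $\gI$ to $\gtp$ and \cref{lem:duality} to pass from cohomology to homology. Throughout I would use the fact that every finite-dimensional $\fsl_2$-module is self-dual, so $V(\lambda)^* \cong V(\lambda)$ for every $\lambda$, which makes the bookkeeping of duals harmless.

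First I would treat part (a). Applying the first isomorphism of \cref{prop0} with $\lambda = 3 \neq 0$ and $\mu = 1$, and using self-duality, gives, with $c = b - a$,
\[
\Hom_{\fsl_2 \times \fsl_2}(V(3) \boxtimes V(1), \h^2(\gI, \C)) \cong \Hom_{\fsl_2}(V(3), \h^2(\gtp, \ev_c^* V(1))).
\]
Next, \cref{lem:duality} applied to the pair $\gtp \subset \gt$ with $M = \ev_c^* V(1)$ yields a $\g$-module isomorphism $\h^2(\gtp, \ev_c^* V(1)) \cong \h_2(\gtp, \ev_c^* V(1))^*$. Since $\h_2(\gtp, \ev_c^* V(1))$ is finitely semisimple (by the same reasoning as in \cref{fin.comp.fac}, it is a $\g$-subquotient of $\Lambda^2(\gtp) \otimes \ev_c^* V(1)$) and each of its simple $\fsl_2$-summands is self-dual, dualizing preserves the multiplicity of $V(3)$. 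Hence this $\Hom$ is nonzero if and only if $\Hom_{\fsl_2}(V(3), \h_2(\gtp, \ev_c^* V(1)))$ is nonzero, which vanishes by \cref{h2.sl2.1.3}(a) because $c = b - a \neq 0$.

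Parts (b) and (c) would then follow by the same template. For part (b) I would instead invoke the second isomorphism of \cref{prop0} with $\lambda = 1$ and $\mu = 3 \neq 0$ (so that the coefficient module is again $V(1)$ and the source is $V(3)$), setting $c = a - b$, and conclude as above via \cref{lem:duality} and \cref{h2.sl2.1.3}(a). For part (c) I would apply either isomorphism of \cref{prop0} with $\lambda = \mu = 1$ to reduce to $\Hom_{\fsl_2}(V(1), \h_2(\gtp, \ev_c^* V(1)))$, which vanishes by \cref{h2.sl2.1.3}(b).

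Since the substantive computation was carried out in \cref{h2.sl2.1.3}, no step here is a genuine obstacle; the only points requiring care are choosing, in each part, the correct one of the two isomorphisms in \cref{prop0} so that the coefficient evaluation module is $V(1)$ and the target matches the hypotheses of \cref{h2.sl2.1.3}, and checking that the relevant evaluation parameter ($b-a$ or $a-b$) is nonzero, which is guaranteed by the standing assumption $a \neq b$.
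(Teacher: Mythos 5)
Your proposal is correct and follows essentially the same route as the paper's own proof: reduce via \cref{prop0} to a $\Hom$ into $\h^2(\gtp,\ev_c^*V(1)^*)$, convert to homology using \cref{lem:duality} together with self-duality of finite-dimensional $\fsl_2$-modules, and invoke \cref{h2.sl2.1.3}. Your extra care in selecting which of the two isomorphisms in \cref{prop0} to use for part (b), so that the coefficient module is $V(1)$, is exactly the point the paper leaves implicit under ``similar arguments.''
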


\begin{proof}
We will prove part (a); parts (b) and (c) follow by similar arguments. From \cref{prop0}, there is an isomorphism 
\[
\Hom_{\g \times \g} (V(3) \boxtimes V(1), \h^2 (\gI, \C)) \cong \Hom_{\g} (V(3), \h^2 (\gtp, \ev_c^* V(1)^*)). 
\]
By Lemma \ref{lem:duality} and the fact that finite-dimensional $\fsl_2$-modules are self-dual, there is an isomorphism of $\fsl_2$-modules $\h^2 (\gtp, \ev_c^* V(1)^*) \cong \h_2 (\gtp, \ev_c^* V(1))$. 
Therefore, one can use \cref{h2.sl2.1.3} to show that $\Hom_{\g} (V(3), \h^2 (\gtp, \ev_c^* V(1)))=0$.
\end{proof}

\subsection{}

Up to this point, we know that there is an $\fsl_2 \times \fsl_2$-module isomorphism
\[
\h^2 (\fsl_2 \otimes I, \C)
\cong
\C \boxtimes \C \oplus (V(2) \boxtimes V(2))^m \oplus V(4) \boxtimes \C \oplus \C \boxtimes V(4),
\] 
for some $m \ge 0$. In order to explicitly compute $m$, rewrite $\h^2(\fsl_2 \otimes I, \C)$ as a module for the diagonal subalgebra $\fsl_2$ of $\fsl_2 \times \fsl_2$,
\[
\h^2 (\fsl_2 \otimes I, \C) \cong \C^{m+1} \oplus V(2)^m \oplus V(4)^{m+2} ,
\]
and recall from \cref{prop:HC1} that $\dim \Hom_{\fsl_2} (\C , \h^2 (\fsl_2 \otimes I, \C)) = 2$.  This implies that $m=1$, and finishes the proof of Theorem~\ref{thm:H^2.sl2oI}.


\appendix

\section{Spectral sequences} \label{appendix}

\subsection{A convergent spectral sequence} \label{ss:ss}

In this section we construct the spectral sequence that is used in Section~\ref{ss:h2.gIs} to compute the cohomology group $\h^2(\gIs,\C)$.

Fix distinct elements $a_1,\dots,a_k \in \C$, and set $I = \subgrp{(t-a_1) \cdots (t-a_k)} \unlhd \C[t]$. The powers of $I$ define a decreasing filtration $\C[t] \supset I \supset I^2 \supset \cdots$ on $\C[t]$, which induces a decreasing Lie algebra filtration $\gt \supset \gI \supset \gI^2 \supset \cdots$ on $\gt$ and on its subalgebra $\gI$. Given $s > 1$, one obtains an induced decreasing filtration on the Lie algebra $\gIs \cong1 (\gI)/(\g \otimes I^s)$. The associated graded Lie algebras are then defined by
\begin{align*}
\gr(\gt) &= \bigoplus_{n \geq 0} (\gI^n)/(\gI^{n+1}), \\
\gr(\gI) &= \bigoplus_{n \geq 1} (\gI^n)/(\gI^{n+1}), \text{ and} \\
\gr(\gIs) &= \bigoplus_{1 \leq n < s} (\g \otimes I^n/I^s)/(\g \otimes I^{n+1}/I^s).
\end{align*}
The adjoint action of $\gt$ on itself induces an action of $\gt$ on each of these associated graded Lie algebras. This action factors through the quotient Lie algebra $\gt/(\gI) \cong \g \otimes \C[t]/I$. Since $I = \subgrp{(t-a_1) \cdots (t-a_k)}$ and the $a_i$ are distinct, it follows from the Chinese Remainder Theorem that $\g[t]/(\gI) \cong \gk$ as Lie algebras. More generally, one obtains Lie algebra isomorphisms
\begin{align}
\gr(\gt) &\cong \gt^{\oplus k}, \label{eq:grgtalgebraiso} \\
\gr(\gI) &\cong (\gtp)^{\oplus k}, \text{ and} \label{eq:grgIalgebraiso} \\
\gr(\gIs) &\cong (\gtps)^{\oplus k}, \label{eq:grgIsalgebraiso}
\end{align}
where $\gtps := \g \otimes (t\C[t]/t^s \C[t])$ as in Section \ref{ss:ldcahogtps}. Considering $\gt^{\oplus k} = (\g \otimes \C[t])^{\oplus k}$ as a graded space via the polynomial grading on $\C[t]$, and then considering the induced gradings on $(\gtp)^{\oplus k}$ and $(\gtps)^{\oplus k}$, the previous three isomorphisms are isomorphisms of graded Lie algebras. Restricting to the $0$-graded parts of each graded Lie algebra, (\ref{eq:grgtalgebraiso}--\ref{eq:grgIsalgebraiso}) are $\gk$-module isomorphisms.

The decreasing filtration on $\gIs$ induces a decreasing filtration on its exterior algebra:
\begin{equation}{\label{eq:filt.gIs}}
F^{j} \Lambda^n( \gIs )
:= \sum_{\substack{j \leq j_1+j_2+\cdots+j_n \\ 0 < j_1, \ldots, j_n < s}} (\gI^{j_1}/I^s) \wedge (\gI^{j_2}/I^s) \wedge \cdots \wedge (\gI^{j_n}/I^s).
\end{equation}
This filtration is bounded above because $F^j \Lambda^n (\gIs) = \Lambda^n (\gIs)$ if $j \leq n$, and is bounded below because $F^j \Lambda^n (\gIs) = 0$ if $j > n(s-1)$. This filtration is also compatible with the differential on the Koszul resolution $U(\gIs) \otimes \Lambda^\bullet(\gIs)$. Now recall that $\Hbul(\gIs,\C)$ can be computed as the cohomology of the dual Koszul complex $C^\bullet := \Hom_\C(\Lambda^\bullet(\gIs),\C)$. Define an increasing filtration $F^\bullet$ on $C^\bullet$ by
\[
F^j C^n = \Hom_\C(\Lambda^n(\gIs)/F^{j+1} \Lambda^n(\gIs),\C).
\]
Then $F^\bullet$ is compatible with the Koszul differential on $C^\bullet$, and is bounded above and below since the original filtration on $\Lambda^\bullet(\gIs)$ was bounded above and below.

The exterior algebra $\Lambda^\bullet(\gr(\gIs))$ inherits an additional non-negative grading from the grading on $\gr(\gIs)$; denote the $j$-th graded component of $\Lambda^\bullet(\gr(\gIs))$ by $\Lambda^\bullet(\gr (\gIs))_{(j)}$. This grading passes to an additional non-negative grading on the cohomology ring $\h^\bullet (\gr(\gIs),\C)$; denote its $p$-th graded component $\Hbul(\gr(\gIs),\C)_{(p)}$. Now one can check that
\begin{equation} \label{eq:grC}
F^j C^n / F^{j-1} C^n \cong \Hom_\C(\Lambda^n(\gr (\gIs))_{(j)},\C),
\end{equation}
hence the associated graded complex $\gr(C^\bullet)$ arising from the filtration $F^\bullet$ on $C^\bullet$ identifies with the dual Koszul complex for $\gr(\gIs)$. In particular, $\opH^n(\gr(C^\bullet)) \cong \opH^n(\gr(\gIs),\C)$.

Write $\delta$ for the Koszul differential on $C^\bullet$. Set $C_n = C^{-n}$, and set $F_j C_n = F^j C^{-n}$. Then $(C_\bullet,\delta)$ is a chain complex with differential $\delta$ of degree $-1$, and $F_\bullet$ is an increasing filtration on $C_\bullet$. Since the filtration $F_\bullet C_\bullet$ is bounded below and above, there exists by \cite[Theorem 5.5.1]{Weibel:1994} a convergent spectral sequence of homological type
\begin{equation} \label{eq:homologicalspecseq}
E_{p,q}^1 = \opH_{p+q}(F_p C_\bullet/F_{p-1}C_\bullet) \Rightarrow \opH_{p+q}(C_\bullet).
\end{equation}
By \eqref{eq:grC}, we can rewrite \eqref{eq:homologicalspecseq} as
\begin{equation} \label{eq:may.ss}
E_{p,q}^1 = \opH^{-(p+q)}(\gr(\gIs),\C)_{(p)} \Rightarrow \opH^{-(p+q)}(\gIs,\C).
\end{equation}
Observe that $E_{p,q}^1 = 0$ unless $p \geq 0$ and $p+q \leq 0 \leq 2p + q$; the condition $2p+q \geq 0$ comes from the fact that $\Lambda^n(\gr(\gIs))_{(p)} = 0$ and hence $\opH^n(\gr(\gIs),\C)_{(p)} = 0$ unless $p \geq n$. Since $\gIs$ and $\gr(\gIs)$ are finite-dimensional algebras, it follows that all of the individual terms appearing in \eqref{eq:may.ss} are finite-dimensional, and also for each $n$ that the sum $\bigoplus_{p+q=n} E_{p,q}^\infty$ involves only finitely many nonzero summands.

The adjoint action of $\gt$ on itself induces a filtration-preserving action of $\gt$ on $C_\bullet$. In particular, the ideal $\gI$ of $\gt$ maps $F_j C_n$ into $F_{j-1} C_n$. Then it follows that \eqref{eq:may.ss} is a spectral sequence of $\gt/(\gI) \cong \gk$-modules.

As stated in \eqref{eq:grgIsalgebraiso}, there exists an isomorphism of graded Lie algebras $\gr(\gIs) \cong (\gtps)^{\oplus k}$. Then it follows from the K\"{u}nneth formula (cf.\ \cite[Exercise 7.3.8]{Weibel:1994}) that $\Hbul(\gr(\gIs),\C) \cong \Hbul(\gtps,\C)^{\otimes k}$ as graded-commutative algebras. In particular, there are $\gk$-module isomorphisms
\begin{equation} \label{eq:lem3}
\h^n(\gr(\gIs),\C) \cong \bigoplus_{n_1 + \cdots + n_k = n} \h^{n_1}(\gtps,\C) \boxtimes \cdots \boxtimes  \h^{n_k}(\gtps,\C),
\end{equation}
where the symbol `$\boxtimes$' denotes the external tensor product of $\g$-modules. These isomorphisms are compatible with the additional non-negative gradings on $\Hbul(\gr(\gIs),\C)$ and $\Hbul(\gtps,\C)$.

Since $\g$ is a finite-dimensional simple Lie algebra, every finite-dimensional representation of $\gk$ is completely reducible. Then the spectral sequence \eqref{eq:may.ss} implies that $\opH^n(\gIs,\C)$ is isomorphic to a $\gk$-module subquotient of the expression on the right-hand side of \eqref{eq:lem3}.

\subsection{A spectral sequence that converges to the wrong limit} \label{section:non-conv.ss}

In this section we discuss what happens when one replaces the finite-dimensional Lie algebra $\gIs$ in the previous section with the infinite-dimensional Lie algebra $\gI$. In this case, one again obtains a decreasing filtration $F^\bullet$ on the exterior algebra for $\Lambda^\bullet(\gI)$, defined by
\[
F^j \Lambda^n(\gI) = \sum_{j_1+\cdots+j_n \geq j} (\gI^{j_1}) \wedge \cdots \wedge (\gI^{j_n}),
\]
though the filtration is no longer bounded below. Still, there is an induced increasing filtration on the dual Koszul complex $C^\bullet := \Hom_\C(\Lambda^\bullet(\gI),\C)$, defined by
\[
F^j C^n = \Hom_\C(\Lambda^n(\gI)/F^{j+1} \Lambda^n(\gI),\C).
\]
This filtration is bounded below but is not bounded above. In particular, the filtration $F^\bullet$ on $C^\bullet$ is not exhaustive; i.e., $\bigcup_{j \geq 0} F^j C^n \subsetneq C^n$. Nevertheless, the filtration gives rise as in Section \ref{ss:ss} to a fourth-quadrant homological-type spectral sequence with
\begin{equation} \label{eq:grgIspecseq}
E_{p,q}^1 \cong \opH^{-(p+q)}(\gr(\gI),\C)_{(p)}.
\end{equation}

\begin{lemma}{\label{lem:ssgI}}
The spectral sequence \eqref{eq:grgIspecseq} stabilizes at the $E^1$-page; i.e., $E^1 \cong E^r$ for all $r \geq 1$.
\end{lemma}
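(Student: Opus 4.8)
The plan is to exploit the fact that \eqref{eq:grgIspecseq} is a spectral sequence of $\gk$-modules, for exactly the reason explained in \cref{ss:ss}: the adjoint action of $\gt$ on the dual Koszul complex $C^\bullet$ is filtration-preserving and induces, on the associated graded, an action factoring through $\gt/(\gI) \cong \gk$. Consequently each differential $d^r$ is a homomorphism of $\gk$-modules, and since $\gk$ is semisimple every term $E^r_{p,q}$ is a semisimple $\gk$-module. The strategy is then to show that each irreducible $\gk$-summand of the $E^1$-page occupies a single bidegree, so that no differential $d^r$ with $r \geq 1$ can connect two terms sharing a common irreducible constituent; this forces all higher differentials to vanish.

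First I would record the $\gk$-module structure of the $E^1$-page. Using the isomorphism $\gr(\gI) \cong (\gtp)^{\oplus k}$ from \eqref{eq:grgIalgebraiso} together with the Künneth formula as in \eqref{eq:lem3} (applied to $\gtp$ in place of $\gtps$), one obtains a graded $\gk$-module isomorphism
\[
\h^n(\gr(\gI),\C) \cong \bigoplus_{n_1 + \cdots + n_k = n} \h^{n_1}(\gtp,\C) \boxtimes \cdots \boxtimes \h^{n_k}(\gtp,\C),
\]
under which the grading degree $(p)$ corresponds to the sum of the $t$-degrees of the $k$ tensor factors. Substituting the description \eqref{our.hn.gtp}, namely $\h^{n_i}(\gtp,\C) \cong \bigoplus_{w \in W_a^1,\, \ell(w)=n_i} V(\lambda_w)^*$ with $V(\lambda_w)^*$ concentrated in $t$-degree $d_w$, I would conclude that every irreducible $\gk$-summand of $\h^\bullet(\gr(\gI),\C)$ has the form $V(\lambda_{w_1})^* \boxtimes \cdots \boxtimes V(\lambda_{w_k})^*$ for some tuple $(w_1,\dots,w_k) \in (W_a^1)^k$, appearing in cohomological degree $\sum_i \ell(w_i)$ and grading degree $\sum_i d_{w_i}$.

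The key step is the observation that this tuple is uniquely determined by the $\gk$-isomorphism type of the summand. Indeed, an external tensor product $V_1 \boxtimes \cdots \boxtimes V_k$ determines each factor $V_i$ up to isomorphism, and by the mutual non-isomorphism (as $\g$-modules) of the modules $V(\lambda_w)$ for $w \in W_a^1$ --- recorded in the discussion preceding \cref{theorem:gtpginvariants} --- each factor $V(\lambda_{w_i})^*$ determines $w_i$, and hence both $\ell(w_i)$ and $d_{w_i}$. Therefore each irreducible $\gk$-type occurs in $\h^\bullet(\gr(\gI),\C)$ in exactly one bidegree $(\sum_i \ell(w_i),\, \sum_i d_{w_i})$ of (cohomological degree, grading degree), and hence as a constituent of at most one term $E^1_{p,q}$.

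Finally I would deduce the vanishing of all higher differentials. The differential $d^r \colon E^r_{p,q} \to E^r_{p-r,\,q+r-1}$ lowers the grading degree by $r \geq 1$ and raises the cohomological degree by $1$, so its source and target lie in distinct bidegrees. Since each $E^r_{p,q}$ is a semisimple $\gk$-module whose composition factors are among those of the corresponding $E^1$-term, and since each $\gk$-type is confined to a single bidegree, the source and target of $d^r$ share no common irreducible constituent; hence $d^r = 0$ for all $r \geq 1$, giving $E^1 \cong E^r$ for all $r \geq 1$. I expect the main obstacle to be the bookkeeping in the third step --- making precise that the bidegree of each composition factor is pinned down by its isomorphism type --- which rests entirely on the injectivity of $w \mapsto V(\lambda_w)$; once that is in hand, the $\gk$-equivariance of the spectral sequence forces the differentials to vanish for purely representation-theoretic reasons.
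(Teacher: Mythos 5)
Your proof is correct and follows essentially the same route as the paper: both arguments use the $\gk$-equivariance of the differentials together with the Künneth decomposition of $\h^\bullet(\gr(\gI),\C)$ into external tensor products of the mutually non-isomorphic $\g$-modules $V(\lambda_w)^*$, which is exactly the multiplicity-freeness the paper invokes to kill all the differentials. Your version merely makes explicit the bookkeeping (each irreducible $\gk$-type is pinned to a single bidegree) that the paper leaves as "an evident induction argument."
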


\begin{proof}
As in Section \ref{ss:ss}, \eqref{eq:grgIspecseq} is a spectral sequence of $\gt/(\gI) \cong \gk$-modules. Next, the Lie algebra isomorphism $\gr(\gI) \cong (\gtp)^{\oplus k}$ gives rise to a $\gk$-module isomorphism
\begin{equation} \label{E:HgrgxI}
\opH^\bullet(\gr (\gI),\C) \cong \opH^\bullet(\gtp,\C)^{\boxtimes k} = \opH^\bullet(\gtp,\C) \boxtimes \cdots \boxtimes \opH^\bullet(\gtp,\C) \quad \text{($k$ factors)}.
\end{equation}
By \eqref{our.hn.gtp}, $\opH^\bullet(\gtp,\C)$ is completely reducible and multiplicity free as a $\g$-module. Since the differential $d^1$ on $E^1$ is a $ \gk$-module homomorphism, this implies that $d^1 \equiv 0$, so that $E^1 \cong E^2$. Now an evident induction argument shows that $d^i \equiv 0$ and $E^i \cong E^{i+1}$ for all $i \geq 1$.
\end{proof}

Recall that the adjoint action of a Lie algebra $\fa$ on itself always induces a trivial $\fa$-action on the Lie algebra cohomology ring $\Hbul(\fa,\C)$ (see \cite[\S 3.1.2]{Kumar:2002a}), so the adjoint action of $\gt$ on $\Hbul(\gI,\C)$ factors through the quotient $\gt/(\gI) \cong \gk$. 

\begin{lemma}
For $k>1$, $\Hbul(\gI,\C)$ is not isomorphic as a $\gk$-module to $\Hbul(\gr (\gI),\C)$.
\end{lemma}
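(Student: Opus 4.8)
The plan is to show that $\h^2(\gI,\C)$ and $\h^2(\gr(\gI),\C)$ already fail to be isomorphic as $\gk$-modules, by comparing the multiplicity of the trivial module $\C^{\boxtimes k}$ in cohomological degree $2$. This is the direct generalization of \cref{eg:h2diff}, which settles the case $k=2$, and it suffices because an isomorphism of the graded rings would in particular match the graded pieces.

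First I would check that $\h^2(\gr(\gI),\C)$ contains no trivial $\gk$-composition factor. Using the Künneth isomorphism \eqref{E:HgrgxI} together with its graded refinement \eqref{eq:lem3}, one has
\[
\h^2(\gr(\gI),\C) \cong \bigoplus_{n_1+\cdots+n_k=2} \h^{n_1}(\gtp,\C) \boxtimes \cdots \boxtimes \h^{n_k}(\gtp,\C).
\]
In each summand some $n_i>0$. Since $\h^1(\gtp,\C) \cong \g^*$ is a nontrivial irreducible $\g$-module and, by \cref{example:H2.gtp}, $\h^2(\gtp,\C)$ has no trivial $\g$-composition factor, the $i$-th tensor factor has no trivial $\g$-constituent; hence no summand has a trivial $\gk$-constituent. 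Thus $\C^{\boxtimes k}$ does not occur in $\h^2(\gr(\gI),\C)$.

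Next I would show that $\C^{\boxtimes k}$ does occur in $\h^2(\gI,\C)$, running the argument of \cref{prop3} for general $k$. Consider the LHS spectral sequence
\[
E_2^{p,q} \cong \h^p(\gt/\gI, \h^q(\gI,\C)) \Rightarrow \h^{p+q}(\gt,\C),
\]
with $\gt/\gI \cong \gk$. As in \cref{prop0} and \cref{prop3}, since $\h^q(\gI,\C)$ is a finitely semisimple $\gk$-module and $\gk$ is semisimple, \cite[\S24]{CE48} gives $E_2^{p,q} \cong \h^p(\gk,\C) \otimes \h^q(\gI,\C)^{\gk}$. The first and second Whitehead Lemmas force $E_2^{1,q}=E_2^{2,q}=0$, so the only differential touching the corner terms is $d_3^{0,2}\colon E_2^{0,2} \to E_2^{3,0}$. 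By \cref{theorem:gtpginvariants}, $\h^n(\gt,\C) \cong \h^n(\g,\C)$, which for $\g$ simple is $\C$ when $n=0,3$ and $0$ when $n=1,2$. The Künneth formula then gives $\h^3(\gk,\C) \cong \C^{k}$, so $E_2^{3,0} \cong \C^{k}$, while $E_2^{0,2} \cong \h^2(\gI,\C)^{\gk}$. Since $E_\infty^{3,0} = \coker(d_3^{0,2})$ is a subquotient of $\h^3(\gt,\C) \cong \C$, we get $\operatorname{rank}(d_3^{0,2}) \geq k-1 \geq 1$ for $k>1$; in particular $E_2^{0,2} \neq 0$. (Using $\h^2(\gt,\C)=0$ one even finds $E_\infty^{0,2}=\ker(d_3^{0,2})=0$, so $d_3^{0,2}$ is injective and the multiplicity is exactly $k-1$.) Hence $\C^{\boxtimes k}$ is a composition factor of $\h^2(\gI,\C)$.

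Comparing the two computations, $\h^2(\gI,\C)$ and $\h^2(\gr(\gI),\C)$ differ in the multiplicity of the trivial $\gk$-module, so they are not isomorphic as $\gk$-modules, and therefore $\Hbul(\gI,\C) \not\cong \Hbul(\gr(\gI),\C)$. The main obstacle is the justification that $\h^q(\gI,\C)$ is finitely semisimple as a $\gk$-module (needed to split the $E_2$-page), because $\gI$ is infinite-dimensional and $\h^q(\gI,\C)$ may itself be infinite-dimensional; I would handle this through the weight bound of \cref{fin.comp.fac}, which confines the $\gk$-composition factors to the finitely many highest weights $\leq q\theta$ and thereby exhibits the module as a locally finite, hence finitely semisimple, $\gk$-module, exactly as is used implicitly in \cref{prop0} and \cref{prop3}.
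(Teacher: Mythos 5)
Your argument is correct, but it takes a genuinely different route from the paper. The paper's proof is a soft contradiction argument: it \emph{assumes} the isomorphism $\Hbul(\gI,\C) \cong \Hbul(\gr(\gI),\C) \cong \Hbul(\gtp,\C)^{\boxtimes k}$, feeds this into the LHS spectral sequence for $(\gt,\gI)$, uses $\Hbul(\gtp,\C)^{\g} = \C$ (\cref{theorem:gtpginvariants}) to collapse the whole spectral sequence, and derives $\Hbul(\gt,\C) \cong \Hbul(\g,\C)^{\otimes k}$, contradicting $\Hbul(\gt,\C) \cong \Hbul(\g,\C)$ since $\dim \Hbul(\g,\C) = 2^{\mathrm{rank}(\g)} > 1$. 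You instead generalize \cref{eg:h2diff} and \cref{prop3} to arbitrary $k$ and locate a specific discrepancy: the trivial $\gk$-constituent in degree $2$. Your approach buys more information (it pinpoints where the two modules differ, and your rank count $\operatorname{rank}(d_3^{0,2}) \geq k-1$ is sound since $E_\infty^{3,0}$ is a section of the one-dimensional $\h^3(\gt,\C)$; the parenthetical exact multiplicity $k-1$ needs the extra observation that inflation $\h^3(\gk,\C) \to \h^3(\gt,\C)$ is surjective, which follows since $\ev_{a_1}^*$ splits the restriction to $\g$). The trade-off, which you correctly identify as the main obstacle, is that your factorization $E_2^{p,q} \cong \h^p(\gk,\C) \otimes \h^q(\gI,\C)^{\gk}$ requires knowing \emph{a priori} that $\h^q(\gI,\C)$ is finitely semisimple as a $\gk$-module; note that \cref{fin.comp.fac} only gives finite semisimplicity over the diagonal $\g$, and promoting this to the full $\gk$-action takes a further argument (the $\gt$-action on $\Lambda^n(\gI)$ does not factor through $\gk$, so one cannot simply quote a subquotient of $V(\theta)^{\otimes n}$ for the $\gk$-structure). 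The paper needs this same fact for \cref{prop0} and \cref{prop3}, so you are on equal footing there, but it is worth noticing that the paper's own proof of this particular lemma is structured precisely to sidestep it: under the contradiction hypothesis, $\h^q(\gI,\C)$ is isomorphic to $\bigoplus \h^{j_1}(\gtp,\C)\boxtimes\cdots\boxtimes\h^{j_k}(\gtp,\C)$, which is manifestly finitely semisimple over $\gk$, so the Chevalley--Eilenberg factorization applies unconditionally. Your route is more informative; the paper's is more self-contained.
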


\begin{proof}
Suppose $\Hbul(\gI,\C) \cong \Hbul(\gr (\gI),\C)$ as $\gk$-modules. Then by \eqref{E:HgrgxI}, $\Hbul(\gI,\C) \cong \Hbul(\gtp,\C)^{\boxtimes k}$ as $\gk$-modules. Now consider the LHS spectral sequence for the Lie algebra $\gt$ and its ideal $\gI$:
\[
E_2^{i,j} = \opH^i(\gt/(\gI),\opH^j(\gI,\C)) \Rightarrow \opH^{i+j}(\gt,\C).
\]
One could then rewrite the $E_2$-page as
\begin{align*}
E_2^{i,j} 
&= \bigoplus_{j_1 + \dots + j_k = j} \opH^i(\gk, \opH^{j_{1}}(\gtp,\C) \boxtimes \dots  \boxtimes \opH^{j_{k}}(\gtp,\C)) \\
&\cong \bigoplus_{\substack{i_1 + \dots + i_k = i \\ j_1 + \dots + j_k = j}} \opH^{i_{1}}(\g, \opH^{j_{1}}(\gtp,\C)) \otimes \dots \otimes \opH^{i_{k}}(\g, \opH^{j_{k}}(\gtp,\C))  \\
&\cong \bigoplus_{\substack{i_1 + \dots + i_k = i \\ j_1 + \dots + j_k = j}} [\opH^{i_{1}}(\g, \C) \otimes \opH^{j_{1}}(\gtp,\C)^\g] \otimes \dots \otimes [\opH^{i_{k}}(\g,\C) \otimes  \opH^{j_{k}}(\gtp,\C)^\g].
\end{align*}
By Theorem \ref{theorem:gtpginvariants}, $\Hbul(\gtp,\C)^\g = \opH^0(\gtp,\C) = \C$. Then one would have $E_2^{i,j} = 0$ for all $j > 0$, and the spectral sequence would collapse to yield isomorphisms
\[
\Hbul(\gt,\C) \cong E_2^{\bullet,0} = \Hbul(\gt/\gI,\C) \cong \Hbul(\gk, \C) \cong \Hbul(\g,\C)^{\otimes k}.
\]
This is absurd unless $k=1$, since by Theorem \ref{theorem:gtpginvariants}, $\Hbul(\gt,\C) \cong \Hbul(\g,\C)$, and $\Hbul(\g,\C)$ has dimension $2^{\text{rank}(\g)}>1$. So $\Hbul(\gI,\C) \not\cong \Hbul(\gr (\gI),\C)$ as $\gk$-modules when $k>1$.
\end{proof}

The previous lemma shows that the spectral sequence \eqref{eq:grgIspecseq} does not converge to $\Hbul(\gI,\C)$, though since the filtration on the dual Koszul complex was not exhaustive, this should not have been too surprising. The lack of a $\gk$-module isomorphism $\Hbul(\gI,\C) \cong \Hbul(\gr (\gI),\C)$ is also evident from \cref{eg:h2diff}, where we exhibited a composition factor of $\h^2(\gI,\C)$ that does not appear as a composition factor of $\h^2(\gr (\gI),\C)$.

\bibliographystyle{eprintamsmath}
\bibliography{extensions}

\end{document}